\documentclass[11pt]{amsart}

\usepackage{amssymb,amsfonts,amscd,esvect}

\usepackage{geometry}
\geometry{left=3cm,right=3cm,top=3cm,bottom=3cm}

\usepackage[colorlinks,linktocpage]{hyperref}
\hypersetup{linkcolor=[rgb]{0,0,0.715}}
\hypersetup{citecolor=[rgb]{0,0.715,0}}

\newtheorem{thm}{Theorem}[section]
\newtheorem{cor}[thm]{Corollary}
\newtheorem{prop}[thm]{Proposition}
\newtheorem{lem}[thm]{Lemma}

\newtheorem*{thm1}{Theorem A}
\newtheorem*{cor2}{Corollary B}
\newtheorem*{prop3}{Proposition C}
\newtheorem*{thm4}{Theorem D}

\theoremstyle{definition}
\newtheorem{defn}[thm]{Definition}

\theoremstyle{remark}
\newtheorem{rem}[thm]{Remark}
\newtheorem{rems}[thm]{Remarks}

\newcommand{\step}{\mathrm{step}}
\newcommand{\dimH}{\dim_{\mathcal{H}}}

\makeatletter
\let\c@equation\c@thm
\makeatother
\numberwithin{equation}{section}

\bibliographystyle{plain}

\setcounter{tocdepth}{2}

\linespread{1.125}

\title[]{Nonnegative Ricci curvature, nilpotency,\\ and Hausdorff dimension}
\author[]{Jiayin Pan}
\address[]{Department of Mathematics, University of California, Santa Cruz, California, US.}
\email{jpan53@ucsc.edu}

\begin{document}
	
	\begin{abstract}
		Let $M$ be an open (complete and non-compact) manifold with $\mathrm{Ric}\ge 0$ and escape rate not $1/2$. It is known that under these conditions, the fundamental group $\pi_1(M)$ has a finitely generated torsion-free nilpotent subgroup $\mathcal{N}$ of finite index, as long as $\pi_1(M)$ is an infinite group. We show that the nilpotency step of $\mathcal{N}$ must be reflected in the asymptotic geometry of the universal cover $\widetilde{M}$, in terms of the Hausdorff dimension of an isometric $\mathbb{R}$-orbit: there exist an asymptotic cone $(Y,y)$ of $\widetilde{M}$ and a closed $\mathbb{R}$-subgroup $L$ of the isometry group of $Y$ such that its orbit $Ly$ has Hausdorff dimension at least the nilpotency step of $\mathcal{N}$. This resolves a question raised by Wei and the author (see \cite[Remark 1.7]{PW_ex} and \cite[Conjecture 0.2]{Pan_cone}).
	\end{abstract}
	
	\maketitle
	
	\tableofcontents
	
	\parskip=5pt plus 1pt
	
	\section{Introduction}
	
	Collapsed Ricci limit spaces in general may admit isometric orbits whose Hausdorff dimension exceeds their topological dimension. The first examples with this feature are constructed by Wei and the author \cite{PW_ex} as the asymptotic cone of the universal cover of an open (complete and non-compact) manifold $M$ with $\mathrm{Ric}\ge 0$ and $\pi_1(M)=\mathbb{Z}$, resolving a well-known question by Cheeger-Colding on the Hausdorff dimension of singular sets in Ricci limit spaces \cite{CC97}. More precisely, the example is the equivariant Gromov-Hausdorff limit of
	$$(r_i^{-1}\widetilde{M},\tilde{p},\pi_1(M,p)=\mathbb{Z})\overset{GH}\longrightarrow (Y,y,L),$$
	where $r_i\to\infty$ and $(\widetilde{M},\tilde{p})$ is the universal cover of $(M,p)$. In the limit space, $L$ is a closed $\mathbb{R}$-subgroup of $\mathrm{Isom}(Y)$ and the orbit $Ly$ has Hausdorff dimension $1+\alpha$, where $\alpha\ge 0$ can be any large number by choosing a suitable metric and dimension of $M$.  
	In the same paper, we asked whether the (non-abelian) nilpotency of $\pi_1(M)$ implies the existence of some asymptotic $\mathbb{R}$-orbit of large Hausdorff dimension \cite[Remark 1.7]{PW_ex}. This question was later formalized by the author in \cite[Conjecture 0.2]{Pan_cone}, relating the nilpotency step to Hausdorff dimension. More precisely, for any open manifold $M$ with $\mathrm{Ric}\ge 0$ and escape rate not $1/2$, given that $\pi_1(M)$ contains a torsion-free nilpotent subgroup of nilpotency step $l$, is it true that $\dimH(Ly)\ge l$ for some asymptotic cone $(Y,y)$ of $\widetilde{M}$ and some closed $\mathbb{R}$-subgroup $L$ of $\mathrm{Isom}(Y)$?

	Before proceeding further, we give more background on this problem.
	
	Let $M$ be an open manifold with $\mathrm{Ric}\ge 0$. By the work of Kapovitch-Wilking \cite{KW}, $\pi_1(M)$ contains a nilpotent subgroup of index at most $C(n)$, a constant only depending on $n$. Also see \cite{Mil,Gro_poly}. In the other direction, by the work of Wei \cite{Wei} and Wilking \cite{Wilk}, it is known that any finitely generated virtually nilpotent can be realized as the fundamental group of some open manifold with $\mathrm{Ric}\ge 0$. This is distinct from open manifolds with nonnegative sectional curvature, whose fundamental groups are always virtually abelian \cite{CG_soul}. 
	
	Therefore, it is natural to investigate on what additional conditions $\pi_1(M)$ is virtually abelian for nonnegative Ricci curvature; or equivalently, we can ask how virtual abelianness or nilpotency of $\pi_1(M)$ is related to the geometry of $M$. The author has studied this question in \cite{Pan_es0,Pan_esgap,Pan_cone}. \cite{Pan_es0} introduced a geometric quantity, the escape rate $E(M,p)$, which measures how fast representing geodesic loops escape from bounded sets. The escape rate takes value within $[0,1/2]$. It is known that $E(M,p)<1/2$ implies the finite generation of $\pi_1(M)$ by Sormani's halfway lemma \cite{Sor}. In \cite{Pan_esgap}, we proved that if the escape rate of $M^n$ is smaller than some universal constant $\epsilon(n)$, then $\pi_1(M)$ is virtually abelian. In \cite{Pan_cone}, we proved that if the escape rate of $M$ is not $1/2$ and the universal cover is (metric) conic at infinity, then $\pi_1(M)$ is virtually abelian. The proofs in both \cite{Pan_esgap} and \cite{Pan_cone} relate the equivariant asymptotic geometry to the structure of $\pi_1(M)$.
	
	Given the above results, it is naturally to further study the equivariant asymptotic geometry without the smallness of escape rate and without the (metric) conic structure at infinity. Such understanding should have implications on virtual abelianness or nilpotency of $\pi_1(M)$. A close look at Wei's examples of open manifolds with $\mathrm{Ric}\ge 0$ and torsion-free nilpotent fundamental groups \cite{Wei} indicates that the nilpotency step is reflected in a lower bound of the Hausdorff dimension of isometric $\mathbb{R}$-orbits. See Section \ref{sec_exmp} for these motivating examples.
	
	This problem of nilpotency step and Hausdorff dimension is also related to the structure of Carnot groups. For a finitely generated virtually nilpotent group $\Gamma$, we define $\step(\Gamma)$ as the nilpotency step of a torsion-free nilpotent subgroup with finite index (see Definition \ref{def_virnilstep}). Any finite generating set $S$ of $\Gamma$ defines a word length metric $d_S$ on $\Gamma$. The asymptotic structure of $(\Gamma,d_S)$ was studied by Gromov \cite{Gro_poly} and Pansu \cite{Pansu}. For any sequence $r_i\to\infty$, Gromov-Hausdorff convergence holds:
	$$(r_i^{-1}\Gamma,e,d_S)\overset{GH}\longrightarrow (G,e,d).$$
	The unique limit space $(G,d)$ is a Carnot group, that is, a simply connected stratified nilpotent Lie group $G$ and a distance $d$ induced by a left-invariant subFinsler metric. There are two ways to detect the nilpotency step of $\Gamma$ from the limit $(G,e,d)$:\\
	(1) $\step(\Gamma)=\step(G)$;\\
	(2) $\step(\Gamma)=\max \{\dimH(L) | L \text{ is a one-parameter subgroup of $G$}\}$.\\
	In fact, $\step(G)=\step(\Gamma)=\dimH(L)$ holds for any one-parameter subgroup $L$ in $\zeta_{l-1}(G)$, the last nontrivial subgroup in the lower central series of $G$ (see Definition \ref{def_nilstep}). This structure result also applies to closed manifolds: for a closed Riemannian manifold $(M,g)$ with a virtually nilpotent fundamental group $\Gamma$, although $g$ cannot have nonnegative Ricci curvature when $\step(\Gamma)\ge 2$, the blow-down sequence of the universal cover $(r_i^{-1}\widetilde{M},\tilde{p},\tilde{g})$ converges in the Gromov-Hausdorff topology to a limit space $(G,e,d)$ as described above. Therefore, we can view the proposed problem as a natural extension of the Carnot group structure from closed manifolds to open ones.
	
	The main result of this paper confirms this conjecture about nilpotency step and Hausdorff dimension \cite[Conjecture 0.2]{Pan_cone}, thus resolves the question raised in \cite[Remark 1.7]{PW_ex}. We also give a description of any limit orbit $Gy$ as a simply connected nilpotent Lie group, extending the above-mentioned Carnot group structure.
	
\begin{thm1}\label{main}
	Let $(M,p)$ be an open manifold with $\mathrm{Ric}\ge 0$, an infinite $\Gamma:=\pi_1(M,p)$, and $E(M,p)\not=1/2$. We denote $\widetilde{M}$ the Riemannian universal cover of $M$. Then\\
	(1) For any equivariant asymptotic cone $(Y,y,G)$ of $(\widetilde{M},\Gamma)$, the limit orbit $Gy$ has a natural simply connected nilpotent Lie group structure with $\step(Gy)\le \step(\Gamma)$.\\
	(2) There exists an equivariant asymptotic cone $(Y,y,G)$ of $(\widetilde{M},\Gamma)$ and a closed $\mathbb{R}$-subgroup $L$ of $G$ such that $\dimH(Ly)\ge \step(\Gamma)$. 
\end{thm1}

We give some remarks on Theorem A.

\begin{rems}
(1) The condition $E(M,p)\not= 1/2$ assures that $\pi_1(M)$ is finitely generated. Thus $\step(\Gamma)$ is defined. $\step(\Gamma)$ is at least $1$ when $\pi_1(M)$ is infinite.\\
(2) The only known examples so far with $\mathrm{Ric}\ge 0$ and $E(M,p)=1/2$ are the ones with infinitely generated $\pi_1(M)$, recently constructed by Brue-Naber-Semola as counter-examples to the well-known Milnor conjecture \cite{BNS,BNS_dim6}. In their examples, the corresponding equivariant asymptotic geometry is very wild (see \cite[Section 2.2.4]{BNS}); in particular, in some of the asymptotic cone, the limit orbit $Gy$ can be compact, thus does not have the structure of a simply connected nilpotent Lie group. This demonstrates that the assumption $E(M,p)\not= 1/2$ in Theorem A is necessary for Theorem A(1) to hold.\\
(3) Under certain geometric conditions of $M$, $E(M,p)\not=1/2$ automatically holds. See, for example, \cite{NPZ}, where the theory developed in this paper is applied to manifolds with nonnegative Ricci curvature and linear volume growth.\\
(4) The inequalities $\step(Gy)\le \step(\Gamma)$ and $\dimH(Ly)\ge \step(\Gamma)$ in Theorem A extend the above-mentioned equality in the case of closed manifolds and asymptotic Carnot groups: $\step(G)=\step(\Gamma)=\dimH(L)$. We point out there are examples of open manifolds with strict inequalities $\step(Gy)< \step(\Gamma) < \dimH(Ly)$ (see Remark \ref{rem_abel_limit} or \cite[Appendix A]{Pan_cone}); in other words, both inequalities in Theorem A cannot be improved to equalities.
\end{rems}



As applications of Theorem A, we can use the asymptotic geometry of $\widetilde{M}$ to control the nilpotency step of $\pi_1(M)$. For convenience, we write $\Omega(\widetilde{M})$ as the set of all asymptotic cones of $\widetilde{M}$ and define
$$\mathcal{D}_\infty(\widetilde{M})=\sup\{\dimH(Ly)|(Y,y)\in\Omega(\widetilde{M}), \text{ $L$ is a closed $\mathbb{R}$-subgroup of $\mathrm{Isom}(Y)$} \}.$$
Using this quantity, we can derive the corollary below from Theorem A(2).

\begin{cor2}\label{main_cor}
	Let $(M,p)$ be an open $n$-manifold with $\mathrm{Ric}\ge 0$ and $E(M,p)\not= 1/2$. Then $$\step(\pi_1(M))\le \mathcal{D}_\infty(\widetilde{M}).$$
	In particular, if $\mathcal{D}_\infty(\widetilde{M})<2$, then $\pi_1(M)$ is virtually abelian.
\end{cor2}

Corollary B generalizes the main result in \cite{Pan_cone}, where the universal cover $\widetilde{M}$ is assumed to be (metric) conic at infinity. When an asymptotic cone $(Y,y)$ is a metric cone with vertex $y$, the orbit $Ly$ always has Hausdorff dimension $1$ for any closed $\mathbb{R}$-subgroup $L$ of $\mathrm{Isom}(Y)$ (see proof of Corollary \ref{cor_cone} for details); in other words, $\mathcal{D}_\infty(\widetilde{M})=1$ if $\widetilde{M}$ is conic at infinity. Then it follows from Corollary B that $\pi_1(M)$ is virtually abelian.

Besides metric cones, Corollary B applies to other asymptotic cones that are not covered by previous results. For instance, following the methods in \cite{PW_ex} and \cite[Appendix A]{Pan_cone}, we can construct the Grushin halfspace below as the asymptotic cone of the universal cover of some open manifold with $\mathrm{Ric}\ge 0$ (also see \cite[Remark 3.9]{DHPW}, where the metric is clarified as a Grushin-type almost Riemannian metric). Given $0\le \alpha_1\le...\le \alpha_k$, we define an incomplete Riemannian metric $g$ on $\mathbb{R}^k \times (0,\infty)$ by
$$g=dr^2 + \sum_{j=1}^k r^{-2\alpha_j}dx_j^2.$$
By taking its metric completion, $g$ defines a distance $d$ on $Y=\mathbb{R}^k \times [0,\infty)$. We denote this Grushin halfspace $(Y,0,d)$ by $\mathbb{G}^+(\alpha_1,...,\alpha_k)$. Each $x_j$-curve through $0\in Y$ is the orbit of some isometric $\mathbb{R}$-action with Hausdorff dimension $1+\alpha_j$. Suppose that every asymptotic cone of $\widetilde{M}$ is isometric to a Grushin halfplane $\mathbb{G}^+(\alpha_1,..,\alpha_k)$ for some $0\le \alpha_1\le...\le \alpha_k$, then by Corollary B, $\step(\pi_1(M))\le 1+\alpha_k$
for some $Y\in \Omega(\widetilde{M})$; in particular, if $\alpha_k<1$ holds for all $Y\in \Omega(\widetilde{M})$, then $\pi_1(M)$ is virtually abelian.

Corollary B also extends the main result in \cite{Pan_esgap} about small escape rate and virtual abelianness. In fact, when $E(M,p)\le \epsilon$, we can show that the orbit of all asymptotic $\mathbb{R}$-orbits are close to $1$. See Proposition \ref{cor_small_escape} for the precise statement. 

\noindent\textbf{Outline of the proof.} To illustrate our approach to Theorem A, we further break the proof of Theorem A(2) into two parts, as Proposition C(1) and C(2) below. We write $\Omega(\widetilde{M},\langle\gamma \rangle)$ as the set of equivariant asymptotic cones of $(\widetilde{M},\langle\gamma \rangle)$, where $\gamma\in \pi_1(M,p)$ and $\langle \gamma \rangle$ is the subgroup generated by $\gamma$.

\begin{prop3}\label{main_Z}
	Let $(M,p)$ be an open $n$-manifold with $\mathrm{Ric}\ge 0$ and $E(M,p)\not= 1/2$. Let $\mathcal{N}$ be a torsion-free nilpotent subgroup of $\pi_1(M,p)$ with finite index and let $l$ be the nilpotency step of $\mathcal{N}$. Then the followings holds for any $\gamma\in \zeta_{l-1}(\mathcal{N})-\{\mathrm{id}\}$.\\
	(1) For every $(Y,y,H)\in\Omega(\widetilde{M},\langle\gamma \rangle)$, the orbit $Hy$ is homeomorphic to $\mathbb{R}$.\\
	(2) There exists $(Y,y,H)\in\Omega(\widetilde{M},\langle\gamma \rangle)$ such that $\dimH(Hy)\ge l$.
\end{prop3}

Below are the major steps in the proof of Theorem A(1) and Proposition C(1):\\
$E(M,p)\not=1/2$;\\
$\Rightarrow$ Equivariant GH distance gaps between different types of possible equivariant asymptotic cones of $(\widetilde{M},\mathcal{N})$ (Propositions \ref{eGH_gap_1} and \ref{eGH_gap_2});\\
$\Rightarrow$ Theorem A(1) and a uniform dimension among all orbits $Gy$ (Proposition \ref{topol_dim});\\
$\Rightarrow$ Proposition C(1).

While some intermediate results (for example, Propositions \ref{topol_dim} and C(1)) have their counterparts in \cite{Pan_cone}, where $\widetilde{M}$ is assumed to be conic at infinity, many new techniques are developed in this paper to study the equivariant asymptotic geometry without the metric cone structures. Compared to the conic at infinity case, the new difficulties are mainly due to the lack of structure results on singular sets with large Hausdorff dimension. When an asymptotic cone $(Y,y)$ is a metric cone with vertex $y$, it follows from Cheeger-Colding splitting theorem \cite{CC96} and the cone structure that the orbit $Gy$ of an isometric $G$-action must stay in an Euclidean factor of $Y$. This fact about metric cones is used extensively in \cite{Pan_cone}. In contrast, in the proof of Theorem A here, we are essentially considering orbits of large Hausdorff dimension in collapsed Ricci limits. Such examples are first constructed in \cite{PW_ex} and no general structure results are known so far.

Among the new techniques, we highlight the proof of distance gaps (Section \ref{sec_egh_gap}). We develop convergence of tunnels (continuous curves in the orbit) and apply the large fiber lemma (Lemma \ref{large_fiber}) from topological dimension theory to derive equivariant GH distance gaps between equivariant asymptotic cones of different types. We shall describe more about this novel method in Section \ref{subsec_egh_idea}.

Proposition \ref{topol_dim} depends on the above mentioned distance gaps and a critical rescaling argument. This kind of arguments was first developed in \cite{Pan_eu} and applied in different contexts to study the equivariant asymptotic geometry \cite{Pan_al_stable,Pan_es0,Pan_esgap,Pan_cone}. 

The proof of Proposition C(2) relies on many structure results of spaces in $\Omega(\widetilde{M},\langle \gamma\rangle)$, including Proposition C(1). Below we give some indications how Hausdorff dimension of $Hy$ and nilpotency step are related in the proof of Proposition C(2).

For $\mathcal{N}$ and $\gamma$ in Proposition C, it is known that any word length metric $d_S$ on $\mathcal{N}$ satisfies
$$C_1\cdot b^{1/l} \le d_S(\gamma^b \tilde{p},\tilde{p})\le C_2\cdot b^{1/l}$$
In fact, this two-side inequality implies $\dimH(Ly)=l$ in a Carnot group $G$, where $L$ is the asymptotic limit of $\langle \gamma \rangle$. For an isometric $\mathcal{N}$-orbit on $\widetilde{M}$, the nilpotency step only offers an upper bound:
$$d(\gamma^b \tilde{p},\tilde{p})\le C\cdot b^{1/l}$$
for all $b\in\mathbb{Z}_+$ (see Corollary \ref{orbit_length_upper}).

We approach Proposition C(2) by contradiction. We define
$$\mathcal{D}:=\sup\{\dimH(Hy)|(Y,y,H)\in\Omega(\widetilde{M},\langle\gamma \rangle)\}$$
and seek a contradiction if $\mathcal{D}<l$. We remark that $\mathcal{D}$ as a supremum indeed can be obtained (Proposition \ref{dim_sup}). The idea is to relate the Hausdorff dimension to a lower bound for orbit length: for each $s>\mathcal{D}$, there is a constant $C'$ such that 
$$d(\gamma^b \tilde{p},\tilde{p})\ge C'\cdot b^{1/s}$$
for all $b\in \mathbb{Z}_+$ large (Proposition \ref{orbit_length_lower}). Then a contradiction to the orbit length upper bound would arise if $\mathcal{D}<l$. 

To close the introduction, we point out that the proof of Theorem A and Proposition C naturally extend to nilpotent isometric actions on open manifolds with $\mathrm{Ric}\ge 0$ as well.

\begin{thm4}
	Let $M$ be an open $n$-manifold with $\mathrm{Ric}\ge 0$ and let $\mathcal{N}$ be a simply connected nilpotent Lie group with nilpotency step $l$. Suppose that $\mathcal{N}$ acts effectively and isometrically on $M$ and $(M,p,G)$ has escape rate less than $1/2$ (see Definition \ref{defn_es_rate_general}). Then\\
	(1) For every $(Y,y,G)\in \Omega(M,\mathcal{N})$, the limit orbit $Gy$ has a natural simply connected nilpotent Lie group structure with $\step(Gy)\le l$. \\
	(2) For every $(Y,y,H)\in\Omega(\widetilde{M},\langle\gamma \rangle)$, where $\gamma \in \zeta_{l-1}(\mathcal{N})-\{\mathrm{id}\}$, the limit orbit $Hy$ is homeomorphic to $\mathbb{R}$.\\
	(3) There is $(Y,y,H)\in\Omega(\widetilde{M},\langle\gamma \rangle)$, where $\gamma \in \zeta_{l-1}(\mathcal{N})-\{\mathrm{id}\}$, and a closed $\mathbb{R}$-subgroup $L$ of $H$ such that $\dimH(Ly)\ge l$.
\end{thm4}

\noindent\textbf{Acknowledgments.} The author is partially supported by National Science Foundation DMS-2304698 and Simons Foundation Travel Support for Mathematicians. A part of the paper was drafted when the author was supported by Fields Postdoctoral Fellowship from the Fields Institute. The author would like to thank Vitali Kapovitch and Guofang Wei for helpful discussions during the preparation of this paper.

\section{Preliminaries}

\subsection{Equivariant Gromov-Hausdorff convergence}

Throughout the paper, we use a tuple $(Y,y,G)$ to denote a pointed complete length metric space $(Y,y)$ with a closed subgroup $G$ of the isometry group $\mathrm{Isom}(Y)$. We recall the basics of (pointed) equivariant Gromov-Hausdorff convergence from \cite{Fu,FY}. For $R>0$, we put
$$G(R)=\{g\in G\ |\ d(gy,y)\le R \}.$$

\begin{defn}\label{defn_FY}\cite{Fu,FY} Let $(Y,y,G)$ and $(Z,z,H)$ be two spaces. We say that $$d_{GH}((Y,y,G),(Z,z,H))\le \epsilon,$$
where $\epsilon>0$, if there are $\epsilon$-approximation maps $(f,\psi,\phi)$, that is,
$$f:B_{1/\epsilon}(y)\to Z,\quad  \psi: G(1/\epsilon) \to H(1/\epsilon), \quad \phi:H(1/\epsilon) \to  G(1/\epsilon) $$
with the following properties:\\
(1) $f(y)=z$;\\
(2) the $\epsilon$-neighborhood $f(B_{1/\epsilon}(y))$ contains $B_{1/\epsilon}(z)$;\\
(3) $|d(f(x),f(x'))-d(x,x')|\le\epsilon$ for all $x,x'\in B_{1/\epsilon}(y)$;\\
(4) if $g\in G(1/\epsilon)$ and $x,gx\in B_{1/\epsilon}(y)$, then $d(f(gx),\psi(g)f(x))\le\epsilon$;\\
(5) if $h\in H(1/\epsilon)$ and $x,\phi(h)x\in B_{1/\epsilon}(y)$, then $d(f(\phi(h)x),h f(x))\le\epsilon$.
\end{defn}

\begin{thm}\label{eqGH_FY}\cite{Fu,FY}
	Let 
	$$(Y_i,y_i)\overset{GH}\longrightarrow (Z,z)$$
	be a Gromov-Hausdorff convergence sequence and let $G_i$ be closed subgroups of $\mathrm{Isom}(Y_i)$. Then the followings hold.\\
	(1) After passing to a subsequence, we have an equivariant Gromov-Hausdorff convergence sequence
	$$(Y_i,y_i,G_i)\overset{GH} \longrightarrow (Z,z,H),$$
	where $H$ is closed subgroup of $\mathrm{Isom}(Z)$.\\
	(2) The sequence of quotient spaces converges
	$$(Y_i/G_i,\bar{y}_i)\overset{GH}\longrightarrow (Z/H,\bar{z}).$$
\end{thm}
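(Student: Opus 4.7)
The approach is a standard Arzelà--Ascoli diagonal argument applied to the sequences of isometries $G_i \subset \mathrm{Isom}(Y_i)$ under the given pointed Gromov--Hausdorff convergence $(Y_i,y_i)\to(Z,z)$. The key observation is that each $g \in G_i(R)$ is a $1$-Lipschitz bijection displacing $y_i$ by at most $R$, so restricted to any bounded ball around $y_i$ it lies in a uniformly equicontinuous family whose behaviour under GH convergence can be controlled. Fixing GH approximation maps $F_i : B_{R_i}(y_i) \to Z$ with $R_i\to\infty$ and errors $\epsilon_i \to 0$, for each $R > 0$ and any sequence $g_i \in G_i(R)$ I consider the composed maps $F_i \circ g_i$ on bounded balls around $y_i$; these are $(1+o(1))$-Lipschitz and uniformly bounded on compact sets, so after passing to a subsequence they converge uniformly (on compact sets, via identification by $F_i$) to a $1$-Lipschitz self-map of $Z$, which a symmetric argument for $g_i^{-1}$ promotes to an isometry $g_\infty \in \mathrm{Isom}(Z)$ with $d(g_\infty z, z)\le R$.

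Next I apply a diagonal argument over an exhaustion $R_k \to \infty$: for each $k$ extract a subsequence so that all of $G_i(R_k)$ converges in the pointed compact-open topology to a subset $H_k \subset \mathrm{Isom}(Z)$ consisting of isometries with displacement at most $R_k$, with $H_k \subset H_{k+1}$. Continuity of composition and inversion in the pointed compact-open topology, together with the fact that these operations are already present in each $G_i$, shows that $H := \overline{\bigcup_k H_k}$ is a closed subgroup of $\mathrm{Isom}(Z)$, and moreover $H(R)$ coincides with the closure of $H_k$ for $R \le R_k$. The approximation data $(f,\psi,\phi)$ required by Definition \ref{defn_FY} are then read off directly: $f$ comes from $F_i$, $\psi$ sends $g \in G_i(1/\epsilon)$ to its constructed limit, and $\phi$ is obtained by inverting the same correspondence, choosing for each element of $H(1/\epsilon)$ a nearby preimage in $G_i(1/\epsilon)$ guaranteed by the construction. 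Conditions (1)--(5) of Definition \ref{defn_FY} follow once one tracks the $\epsilon$-errors in this correspondence.

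For part (2), I descend the upstairs GH approximation to a map on quotients. Given $(f,\psi,\phi)$ realising equivariant $\epsilon$-closeness of $(Y_i,y_i,G_i)$ and $(Z,z,H)$, define $\bar f : B_{1/\epsilon}(\bar y_i) \to Z/H$ by choosing, for each $\bar x$, a representative $x \in Y_i$ with $d(x,y_i)$ close to $d(\bar x, \bar y_i)$, and setting $\bar f(\bar x) := \overline{f(x)}$. Condition (4) applied to $g \in G_i$ with $gx = x'$ gives that $\bar f$ is well-defined up to error $O(\epsilon)$; combined with the formula $d(\bar x,\bar x') = \inf_{g\in G_i} d(gx,x')$, it upgrades the distance bound (3) upstairs to the corresponding downstairs bound, while surjectivity up to $O(\epsilon)$ follows from (2) upstairs and from condition (5) applied to the elements of $H$ realising quotient infima in $Z/H$.

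The main obstacle is bookkeeping rather than conceptual: one must verify that the Arzelà--Ascoli limits assemble consistently across the exhaustion $R_k \to \infty$, that the resulting correspondence is simultaneously a ``homomorphism up to $\epsilon$'' and realises the displacement conditions in both directions, and that the quotient metric, being an infimum over orbits, is well-approximated by the quotient of $f$ --- this last point requiring precisely the two-sided equivariance encoded in $\psi$ and $\phi$, together with the fact that the relevant infima are attained (up to $\epsilon$) by group elements of uniformly bounded displacement controlled by the diameters of the balls involved.
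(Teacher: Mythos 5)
This statement is not proved in the paper at all: it is quoted from Fukaya and Fukaya--Yamaguchi \cite{Fu,FY}, so there is no internal proof to compare against. Your sketch is essentially the standard argument of those references (equicontinuity of bounded-displacement isometries, Arzel\`a--Ascoli plus a diagonal extraction over an exhaustion $R_k\to\infty$, then descent of the approximation maps to the quotients), and it is correct in outline. Two small caveats worth making explicit if you were to write it out: the Arzel\`a--Ascoli step needs the spaces to be proper (boundedly compact), which holds in the paper's setting of manifolds with $\mathrm{Ric}\ge 0$ and their Ricci limits but is not automatic for general complete length spaces; and in part (2) the quotient distance between points of $B_r(\bar y_i)$ is realized (up to $\epsilon$) only by group elements of displacement roughly $2r$, so one must shrink the radius on which the descended map $\bar f$ is defined relative to the upstairs approximation scale --- exactly the bookkeeping the paper carries out in its proof of Lemma \ref{mod_max_torus}, where $\bar f_i$ is defined on $B_{1/(5\epsilon_i)}(\bar y_i)$ and the conclusion is a $10\epsilon_i$-approximation.
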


Later in Sections \ref{subsec_one_para_orbit} and \ref{subsec_pf_c1}, we shall use the convergence of symmetric subsets $S_i\subseteq G_i$. Recall that a subset $S$ in a group $G$ is \textit{symmetric}, if $e\in S$ and $g^{-1}\in S$ for every $g\in S$.

\begin{defn}\label{def_conv_symsubset}
	Let 
	$$(Y_i,y_i,G_i)\overset{GH}\longrightarrow (Z,z,H)$$
	be an equivariant Gromov-Hausdorff convergent sequence and let $S_i$ be a sequence of closed symmetric subsets in $G_i$. We say that
	$$(Y_i,y_i,S_i)\overset{GH}\longrightarrow (Z,z,S),$$
	for a closed symmetric subset $S$ of $H$, if\\
	(1) for every $h\in S$, there is a sequence $g_i\in S_i$ converging to $h$,\\
	(2) every sub-convergent sequence $g_i \in S_i$ has the limit $h$ in $S$.
\end{defn}

A corresponding precompactness result follows directly from the proof of \cite[Proposition 3.6]{FY}.

\begin{prop}
	Let 
	$$(Y_i,y_i,G_i)\overset{GH}\longrightarrow (Z,z,H)$$
	be an equivariant Gromov-Hausdorff convergent sequence and let $S_i$ be a sequence of closed symmetric subsets in $G_i$. Then after passing to a subsequence, we have convergence
	$$(Y_i,y_i,S_i)\overset{GH}\longrightarrow (Z,z,S),$$
	 where $S$ is a closed symmetric subset of $H$.
\end{prop}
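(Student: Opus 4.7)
The plan is to mimic the Arzel\`a--Ascoli-type argument in \cite[Proposition 3.6]{FY} that extracts a limit subgroup $H$, merely replacing subgroup structure by closed-symmetric-subset structure. The equivariant Gromov--Hausdorff convergence $(Y_i,y_i,G_i)\to(Z,z,H)$ supplies $\epsilon_i$-approximation triples $(f_i,\psi_i,\phi_i)$ with $\epsilon_i\to 0$, and the maps $\psi_i:G_i(1/\epsilon_i)\to H(1/\epsilon_i)$ will be used to transport the sets $S_i$ into $H$, where a Hausdorff-hyperspace compactness argument produces the candidate limit after a diagonal extraction in the radius.

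Fix a sequence $R_k\nearrow\infty$. For $i$ sufficiently large set $S_i(R_k):=S_i\cap G_i(R_k)$, a closed subset of the compact ball $G_i(R_k)$, and note $\psi_i(S_i(R_k))\subseteq H(R_k+\epsilon_i)\subseteq H(R_k+1)$. The ball $H(R_k+1)$ is compact in $\mathrm{Isom}(Z)$ by Arzel\`a--Ascoli (using properness of $Z$ as the GH-limit of proper spaces), so the hyperspace of its closed subsets, endowed with Hausdorff distance, is compact. Extract a subsequence along which $\psi_i(S_i(R_k))$ converges in Hausdorff distance to some closed $T(R_k)$, then diagonalize in $k$ so that this holds for every $k$. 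Setting $S(R_k):=T(R_k)\cap H(R_k)$, these are consistent in $k$, so $S:=\bigcup_k S(R_k)$ is well defined and closed in $H$, since its intersection with each ball $H(R_k)$ is closed.

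It remains to check the two conditions of Definition \ref{def_conv_symsubset}. For (1), given $h\in S$ pick $R_k$ with $h\in S(R_k)$; Hausdorff convergence yields $g_i\in S_i(R_k)$ with $\psi_i(g_i)\to h$, and compatibility (4) of Definition \ref{defn_FY} upgrades this to equivariant convergence $g_i\to h$. For (2), if $g_i\in S_i$ converges equivariantly to $h\in H$, then $d(g_iy_i,y_i)$ is bounded, so $g_i\in S_i(R_k)$ for some fixed $k$ and all large $i$; hence $\psi_i(g_i)$ belongs to the Hausdorff-convergent family $\psi_i(S_i(R_k))\to T(R_k)$ and its limit $h$ lies in $T(R_k)\cap H(R_k)=S(R_k)\subseteq S$, where compatibility (5) with $\phi_i$ ensures the two limits genuinely agree in $H$. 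Symmetry is inherited: if $h\in S$ and $g_i\in S_i$ realize (1), then $g_i^{-1}\in S_i$ and $g_i^{-1}\to h^{-1}$, whence $h^{-1}\in S$ by (2); similarly $e\in S_i$ for every $i$ gives $e\in S$.

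The main obstacle I expect is purely organizational rather than mathematical: coordinating the $\epsilon_i$-errors across the triples $(f_i,\psi_i,\phi_i)$ so that a single diagonal subsequence witnesses both directions of Definition \ref{def_conv_symsubset} simultaneously. This is exactly the bookkeeping already carried out in the subgroup case of \cite[Proposition 3.6]{FY}; since $\psi_i,\phi_i$ respect inversion up to $\epsilon_i$ and the sets $S_i$ are closed under inversion, no genuinely new difficulty arises, and the subgroup-closure step present in the original argument is simply dropped.
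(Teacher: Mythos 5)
Your proposal is correct and takes essentially the same approach as the paper: the paper simply observes that the precompactness follows from the proof of \cite[Proposition~3.6]{FY}, and your argument is precisely that Fukaya--Yamaguchi Arzel\`a--Ascoli diagonal extraction, with the subgroup-closure step dropped and symmetry inherited from $S_i$ via inversion and the fact that $e\in S_i$ for all $i$.
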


Let $M$ be an open manifold with $\mathrm{Ric}\ge 0$. Let $\widetilde{M}$ be its Riemannian universal cover and let $\Gamma=\pi_1(M,p)$. For a sequence $r_i\to\infty$, after passing to a subsequence, we have convergence
$$(r_i^{-1}\widetilde{M},p,\Gamma)\overset{GH}\longrightarrow (Y,y,G).$$
The limit space $(Y,y,G)$ is called an \textit{equivariant asymptotic cone} of $(\widetilde{M},\Gamma)$. By the work of Colding-Naber \cite{CN}, $G$ is a Lie group.

In general, the above limit space $(Y,y,G)$ is not unique and may depend on the choice of $r_i\to\infty$. We denote $\Omega(\widetilde{M},\Gamma)$ as the set of all equivariant asymptotic cones of $(\widetilde{M},\Gamma)$.

\begin{prop}\label{cpt_cnt}
	The set $\Omega(\widetilde{M},\Gamma)$ is compact and connected in the pointed equivariant Gromov-Hausdorff topology.
\end{prop}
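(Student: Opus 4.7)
The plan is to realize $\Omega(\widetilde{M},\Gamma)$ as a nested intersection of compact connected sets, from which both compactness and connectedness follow at once. Consider the rescaling curve
$$\varphi:[1,\infty)\longrightarrow \mathcal{X},\qquad \varphi(r)=(r^{-1}\widetilde{M},\tilde p,\Gamma),$$
where $\mathcal{X}$ denotes the space of pointed equivariant isometry classes equipped with the topology from Definition~\ref{defn_FY}. For each integer $n\ge 1$ set $K_n=\overline{\varphi([n,\infty))}\subset\mathcal{X}$. I will show that each $K_n$ is compact and connected, that $\Omega(\widetilde{M},\Gamma)=\bigcap_{n\ge 1}K_n$, and then invoke the standard point-set fact that a decreasing nested intersection of non-empty compact connected subsets of a Hausdorff space is compact and connected.

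For continuity of $\varphi$: for $r,r'>0$ the identity map on $\widetilde M$ is a $\Gamma$-equivariant bi-Lipschitz equivalence between $\varphi(r)$ and $\varphi(r')$ with constant $\max\{r/r',r'/r\}$, and plugging this into Definition~\ref{defn_FY} produces equivariant $\epsilon$-approximations with $\epsilon\to 0$ as $r'\to r$. In particular $\varphi([n,\infty))$ is connected, so $K_n$ is connected. For compactness, every $r^{-1}\widetilde M$ has $\mathrm{Ric}\ge 0$, so Gromov precompactness together with its equivariant upgrade in Theorem~\ref{eqGH_FY}(1) makes $\{\varphi(r):r\ge 1\}$ relatively compact in $\mathcal{X}$, and $K_n$ is a closed subset of a compact set. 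The identification $\Omega(\widetilde{M},\Gamma)=\bigcap_n K_n$ is essentially by definition: a limit of $\varphi(r_i)$ with $r_i\to\infty$ lies in every $K_n$, and conversely an element of every $K_n$ can be approximated within $1/n$ by some $\varphi(r_n)$ with $r_n\ge n$, realizing it as an equivariant asymptotic cone along $r_n\to\infty$.

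The nested-intersection step is then purely topological: if $\bigcap_n K_n=A\sqcup B$ with $A,B$ disjoint, compact, and non-empty, separate them by disjoint open sets $U,V\subset\mathcal{X}$; the decreasing family of compacts $K_n\setminus(U\cup V)$ has empty intersection, so by the finite intersection property some $K_n$ is contained in $U\cup V$ and hence disconnected, contradicting connectedness of that $K_n$. The only bookkeeping to check — and the only place one has to be slightly careful — is that $\mathcal{X}$ is Hausdorff, i.e.\ that pointed equivariant GH distance zero forces equivariant isometry of triples; this is immediate from the approximation scheme of Definition~\ref{defn_FY} and is part of the standard Fukaya--Yamaguchi framework. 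I do not anticipate any deeper obstacle in this proof.
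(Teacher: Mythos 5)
Your proof is correct and takes what is essentially the standard approach; the paper itself does not reproduce a proof here but defers to \cite[Proposition 2.1]{Pan_eu}, which establishes the statement via the same nested-intersection scheme you use: realize $\Omega(\widetilde M,\Gamma)=\bigcap_{n}\overline{\{(r^{-1}\widetilde M,\tilde p,\Gamma):r\ge n\}}$, note each term is connected (continuity of the rescaling curve) and compact (Gromov precompactness plus the Fukaya--Yamaguchi equivariant upgrade), and apply the point-set fact that a decreasing intersection of nonempty compact connected Hausdorff subsets is compact and connected. Since the paper contains no proof of its own, there is nothing further to compare beyond noting that your argument matches the cited one.
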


See \cite[Proposition 2.1]{Pan_eu} for a proof of Proposition \ref{cpt_cnt}.

\subsection{Nilpotent groups}

We recall some basics results about nilpotent groups.

\begin{defn}\label{def_nilstep}
A group $\mathcal{N}$ is \textit{nilpotent}, if the following central series terminates 
$$\mathcal{N}=\zeta_0(\mathcal{N})\triangleright \zeta_1(\mathcal{N}) \triangleright ... \triangleright \zeta_l(\mathcal{N})=\{\mathrm{id}\},$$
where $\zeta_{j+1}(\mathcal{N})=[\mathcal{N},\zeta_j(\mathcal{N})]$. We define the \textit{nilpotency step or class} of $\mathcal{N}$ as the smallest integer $l$ such that $\zeta_l(\mathcal{N})=\{\mathrm{id}\}$.
\end{defn}

It is clear that $\zeta_{l-1}(\mathcal{N})$ is contained in $Z(\mathcal{N})$, the center of $\mathcal{N}$.
  

The following result is standard for finitely generated nilpotent groups; see \cite{KM}.

\begin{prop}\label{torsion_free_index}
	Let $\Gamma$ be a finitely generated nilpotent group. Then\\
	(1) $\Gamma$ has a torsion-free nilpotent subgroup $\mathcal{N}$ of finite index;\\
	(2) if $\mathcal{N'}$ is another torsion-free nilpotent subgroup of finite index, then $\step(\mathcal{N})=\step(\mathcal{N'})$.
\end{prop}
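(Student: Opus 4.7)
\medskip

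\noindent\textbf{Proof plan for Proposition \ref{torsion_free_index}.}

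For part (1), the plan is to build $\mathcal{N}$ by excising the torsion of $\Gamma$. The first step is to recall two classical facts about a finitely generated nilpotent group $\Gamma$: the set of torsion elements $T(\Gamma)$ forms a (characteristic) subgroup, because in a nilpotent group the product of two elements of finite order has finite order; and moreover $T(\Gamma)$ is \emph{finite}, because $\Gamma$ is polycyclic and one sees by induction on the nilpotency step (using finite generation of each factor $\zeta_{j-1}(\Gamma)/\zeta_j(\Gamma)$) that the torsion is bounded. The second step is to invoke residual finiteness of finitely generated nilpotent groups. Since $T(\Gamma)$ is finite, for each $t\in T(\Gamma)\setminus\{\mathrm{id}\}$ there is a finite-index subgroup $H_t\le \Gamma$ with $t\notin H_t$; intersect these finitely many subgroups to obtain a finite-index subgroup $\mathcal{N}\le\Gamma$ with $\mathcal{N}\cap T(\Gamma)=\{\mathrm{id}\}$. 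Any torsion element of $\mathcal{N}$ lies in $T(\Gamma)\cap\mathcal{N}$, so $\mathcal{N}$ is torsion-free, and it is nilpotent as a subgroup of a nilpotent group.

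For part (2), the plan is to reduce to a statement about finite-index inclusions and then invoke Malcev's theorem. Given two torsion-free nilpotent subgroups $\mathcal{N},\mathcal{N}'$ of finite index in $\Gamma$, their intersection $\mathcal{N}\cap\mathcal{N}'$ is torsion-free nilpotent and still of finite index in both $\mathcal{N}$ and $\mathcal{N}'$. Thus it suffices to show: if $H\le K$ is a finite-index inclusion of finitely generated torsion-free nilpotent groups, then $\step(H)=\step(K)$. Using Malcev's theorem, each such group embeds as a cocompact lattice in a unique connected, simply connected nilpotent Lie group (its Malcev completion), with the nilpotency step of the group equal to that of the Lie group; and a finite-index inclusion $H\le K$ induces an isomorphism of Malcev completions $H\otimes\mathbb{R}\cong K\otimes\mathbb{R}$. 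Hence $\step(H)=\step(K)$.

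If one prefers to avoid the Malcev machinery, the same equality can be proved by a direct induction on $\step(K)$ using the fact that in a finitely generated torsion-free nilpotent group the terms of the lower central series are \emph{isolated} (i.e.\ if $g^n\in\zeta_j(K)$ for some $n\neq 0$ then $g\in\zeta_j(K)$). This isolation property, combined with the observation that for any $k\in K$ some nonzero power $k^n$ lies in $H$, lets one transfer nonvanishing iterated commutators in $K$ into nonvanishing iterated commutators in $H$, yielding $\step(K)\le\step(H)$; the opposite inequality is immediate from $H\le K$.

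The only nontrivial step is part (2): the naive inclusion $\zeta_j(H)\subseteq\zeta_j(K)\cap H$ does not go the right way, so one really must exploit either the Malcev completion or the isolator property of the lower central series in torsion-free nilpotent groups. Everything else is a direct assembly of standard facts about finitely generated nilpotent groups.
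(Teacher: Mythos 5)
The paper does not prove this proposition; it is stated as standard and cited to \cite{KM}, so there is no in-paper argument to compare against. Your proof plan is correct and is essentially the standard textbook route: for (1), finiteness of the torsion subgroup of a finitely generated nilpotent group together with residual finiteness yields a finite-index subgroup meeting $T(\Gamma)$ trivially; for (2), reducing to a finite-index inclusion $H\le K$ of finitely generated torsion-free nilpotent groups and invoking the uniqueness of the Malcev completion (or, alternatively, the isolator/root-closure property of the lower central series) gives $\step(H)=\step(K)$.

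One small remark on the alternative argument for (2): you do not actually need the full isolation property of the $\zeta_j$'s. It suffices to observe that the iterated commutator of maximal length $l=\step(K)$,
$$c(x_0,\dots,x_{l-1})=[\,\cdots[[x_0,x_1],x_2],\dots,x_{l-1}\,],$$
is multilinear as a map $K^l\to\zeta_{l-1}(K)$ (since $\zeta_l(K)=\{\mathrm{id}\}$). Choosing $n_i\neq 0$ with $x_i^{n_i}\in H$ gives
$c(x_0^{n_0},\dots,x_{l-1}^{n_{l-1}})=c(x_0,\dots,x_{l-1})^{n_0\cdots n_{l-1}}$,
which is nontrivial by torsion-freeness whenever $c(x_0,\dots,x_{l-1})\neq\mathrm{id}$; hence $\step(H)\ge l$. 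So torsion-freeness plus this multilinearity already closes the argument, and isolation is not strictly required. Everything else in your plan is sound.
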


By Proposition \ref{torsion_free_index}, we can naturally define nilpotency steps for finitely generated virtually nilpotent groups.

\begin{defn}\label{def_virnilstep}
	Let $\Gamma$ be a finitely generated virtually nilpotent group. We define
	$$\step(\Gamma):=\text{nilpotency step of } \mathcal{N},$$
	where $\mathcal{N}$ is a torsion-free nilpotent subgroup of $\Gamma$ with finite index.
\end{defn}

Note that under this definition, any finite nilpotent group $\Gamma$ has $\step(\Gamma)=0$.

The growth rate of finitely generated nilpotent groups are well understood by the work of Bass \cite{Bass} and Guivarc'h \cite{Guiv}. In particular, the following word length estimate holds.

\begin{prop}\label{word_length_estimate}\cite{Bass,Guiv}
	Let $\mathcal{N}$ be a finitely generated torsion-free nilpotent group of nilpotency step $l$ and let $\gamma$ be an element in $\zeta_{l-1}(\mathcal{N})-\{\mathrm{id}\}$. Let $S$ be a finite generating set of $\mathcal{N}$ and let $d_S$ be the corresponding word length metric on $\mathcal{N}$. Then there are positive constants $C_1$ and $C_2$ such that
	$$C_1\cdot  b^{1/l} \le d_S(\gamma^b,e)\le C_2\cdot  b^{1/l}$$
	for all $b\in \mathbb{Z}_+$.
\end{prop}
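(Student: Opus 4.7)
I establish the two inequalities separately. Since $\zeta_{l-1}(\mathcal{N})$ is finitely generated torsion-free abelian and is generated (modulo $\zeta_l(\mathcal{N}) = \{\mathrm{id}\}$) by basic $l$-fold iterated commutators in $S$, it suffices to treat the case of a single basic commutator $\gamma = [s_1, [s_2, [\ldots, [s_{l-1}, s_l]\ldots]]]$: the general case reduces to this by writing $\gamma$ as a fixed product of such commutators and noting that each coordinate in $\zeta_{l-1}(\mathcal{N}) \otimes \mathbb{Q}$ scales linearly in $b$.

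\textbf{Upper bound.} In a nilpotent group of step $l$, the $l$-fold iterated commutator $[x_1, [x_2, \ldots, [x_{l-1}, x_l]\ldots]]$ is multilinear in each argument: the standard identities for $[a, bc]$ and $[ab, c]$ have error terms lying one stage deeper in the lower central series, so after $l$ nestings these errors lie in $\zeta_l(\mathcal{N}) = \{\mathrm{id}\}$ and vanish. This yields
$$[s_1^a, [s_2^a, [\ldots, [s_{l-1}^a, s_l^a]\ldots]]] = \gamma^{a^l},$$
and recursively expanding the $l$ nested brackets produces an $S$-word of length $\le C_0 \cdot a$ representing $\gamma^{a^l}$. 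For an arbitrary $b \in \mathbb{Z}_+$ I take $a = \lceil b^{1/l} \rceil$, so that $0 \le a^l - b \le l \cdot a^{l-1}$, and absorb the overshoot $\gamma^{a^l - b}$ by induction on $b$. Closing the induction gives $d_S(\gamma^b, e) \le C_2 \cdot b^{1/l}$.

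\textbf{Lower bound.} For the lower bound I plan to pass to the Mal'cev completion: embed $\mathcal{N}$ as a cocompact lattice in a simply connected nilpotent Lie group $N$ of step $l$ with Lie algebra $\mathfrak{n}$, and use the analytic logarithm $\log \colon N \to \mathfrak{n}$. The lower central series of $\mathfrak{n}$ gives a filtration $\mathfrak{n} = \mathfrak{n}_1 \supseteq \mathfrak{n}_2 \supseteq \cdots \supseteq \mathfrak{n}_l \supseteq 0$ with $\log \gamma \in \mathfrak{n}_l$, so $\log(\gamma^b) = b \cdot \log \gamma \in \mathfrak{n}_l$. For any $S$-word $w = s_{j_1}^{\epsilon_1} \cdots s_{j_n}^{\epsilon_n}$ representing $\gamma^b$, iterating the Baker--Campbell--Hausdorff formula expresses $\log w$ as a sum of iterated brackets of the vectors $\log s_{j_k}$; projecting onto $\mathfrak{n}_l$ selects only the brackets of weight exactly $l$, of which there are at most $O(n^l)$ terms, each of uniformly bounded size. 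Therefore $b \cdot \|\log \gamma\|_{\mathfrak{n}_l} \le C_0 \cdot n^l$ in any fixed norm on $\mathfrak{n}_l$, which rearranges to $n \ge C_1 \cdot b^{1/l}$.

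\textbf{Main obstacle.} The main bookkeeping is in the upper bound when $b$ is not of the form $a^l$. A cleaner route that avoids the induction uses Mal'cev coordinates of the second kind adapted to the lower central series on $N$: in these coordinates the group law, inverse, and integer powers are all polynomial maps, and the coordinate of $\gamma^b$ in the $\gamma$-direction is a polynomial of weighted degree $l$ in the coordinate entries of the word. This simultaneously yields both inequalities and makes transparent that $C_1$ and $C_2$ depend only on $(\mathcal{N}, S, \gamma)$.
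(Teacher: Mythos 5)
The paper gives no proof of Proposition \ref{word_length_estimate}; it is cited directly from Bass and Guivarc'h. So there is nothing in the paper to match your argument against; I can only assess the sketch on its own terms. It is a faithful outline of the standard Bass--Guivarc'h argument: the multilinearity of $l$-fold iterated commutators in a step-$l$ group gives the upper bound, and a homogeneous-norm (weighted-degree) estimate on the Mal'cev completion gives the lower bound. Two small corrections. First, your opening reduction to a single basic commutator $\gamma$ is valid for the upper bound (write a general $\gamma\in\zeta_{l-1}(\mathcal{N})$ as a fixed commuting product of basic commutators of weight $l$, use centrality, and add the estimates), but it does nothing for the lower bound: word lengths do not add along a decomposition $\gamma^b = c_1^{be_1}\cdots c_k^{be_k}$, so bounding each factor from below tells you nothing about the product. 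Fortunately your lower-bound argument via $\log\colon N\to\mathfrak{n}$ already treats a general $\gamma$ directly, so you should simply not invoke the reduction there. Second, the phrase ``projecting onto $\mathfrak{n}_l$ selects only the brackets of weight exactly $l$'' is not literally correct: a $k$-fold bracket of the $\log s_j$ for $k<l$ lands in $\mathfrak{n}_k$, which strictly contains $\mathfrak{n}_l$, and its image under the projection (which depends on a choice of linear complement) need not vanish. What saves the estimate is that the total number of $k$-fold brackets for $k\le l$ appearing in the iterated BCH expansion of an $n$-letter word is still $O(n^l)$ with coefficients bounded independently of $n$ (Dynkin's form of BCH, truncated at degree $l$), so the bound $\|\mathrm{pr}_{\mathfrak{n}_l}\log w\|\le C n^l$ and hence $n\ge C_1 b^{1/l}$ goes through unchanged. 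With those adjustments the sketch is correct, and the Mal'cev-coordinate route you describe in the last paragraph is indeed the cleanest path, essentially Guivarc'h's.
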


Proposition \ref{word_length_estimate} immediately implies an upper bound on the orbit length for $\mathcal{N}$-action on a metric space.

\begin{cor}\label{orbit_length_upper}
	Let $\mathcal{N}$ and $\gamma$ as in Proposition \ref{word_length_estimate}. Suppose that $\mathcal{N}$ acts freely and discretely on a metric space $(X,d)$ by isometries. Then for every $x\in X$, there is a constant $C$ such that 
	$$d(\gamma^b x,x)\le C\cdot b^{1/l}$$
	for all $b\in \mathbb{Z}_+$.
\end{cor}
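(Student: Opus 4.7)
The plan is to deduce the orbit-length bound directly from the word-length estimate of Proposition \ref{word_length_estimate} by pushing word length forward to geometric displacement under the isometric action; no special structure of $\mathcal{N}$ beyond what is already encoded in $d_S$ is needed.

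Fix a finite generating set $S = \{s_1,\dots,s_k\}$ of $\mathcal{N}$ (which exists since $\mathcal{N}$ is finitely generated), and let $d_S$ be the associated word-length metric built from $S\cup S^{-1}$. Define
$$D := \max_{1\le i\le k} d(s_i x, x).$$
Since the action is by isometries, $d(s_i^{-1}x,x) = d(x,s_i x) = d(s_i x,x) \le D$, so every generator (and its inverse) displaces $x$ by at most $D$. For any element $g \in \mathcal{N}$ written as a minimal word $g = t_1 t_2 \cdots t_m$ with $t_j \in S\cup S^{-1}$ and $m = d_S(g,e)$, a telescoping triangle inequality, together with the fact that each $t_1\cdots t_{j-1}$ acts as an isometry, gives
$$d(gx,x) \le \sum_{j=1}^m d(t_1\cdots t_j\, x,\, t_1\cdots t_{j-1}\, x) = \sum_{j=1}^m d(t_j x, x) \le mD = d_S(g,e)\cdot D.$$

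Applying this with $g = \gamma^b$ and invoking the upper estimate in Proposition \ref{word_length_estimate}, we obtain
$$d(\gamma^b x, x) \le d_S(\gamma^b, e)\cdot D \le C_2\, D \cdot b^{1/l}$$
for every $b\in \mathbb{Z}_+$. Setting $C := C_2 D$ yields the claim. I do not expect any serious obstacle here: the freeness and discreteness of the action play no role in the inequality itself (they are part of the intended geometric context, ensuring $\mathcal{N}$ really sits inside $\mathrm{Isom}(X)$), and the only input beyond elementary isometric bookkeeping is the algebraic word-length estimate already quoted from \cite{Bass,Guiv}.
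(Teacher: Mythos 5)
Your proof is correct and is essentially the same as the paper's: fix a finite generating set, bound the displacement of each generator by a constant $D$, and use the telescoping triangle inequality together with the upper bound in Proposition \ref{word_length_estimate} to get $d(\gamma^b x,x)\le C_2 D\, b^{1/l}$. Your additional observation that freeness and discreteness are not needed for this inequality is also accurate; the paper simply states these hypotheses as part of the ambient setting.
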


\begin{proof}
	Let $S$ be a finite generating set of $\mathcal{N}$ and let
	$$R:=\max_{g\in S} d(g x,x).$$
	By Proposition \ref{word_length_estimate}, we can express $\gamma^b$ by at most $C_2b^{1/l}$ many elements in $S$. Thus by triangle inequality, it is clear that
	$$d(\gamma^b x,x)\le R\cdot C_2b^{1/l}.$$
\end{proof}

In general, the lower bound in Proposition \ref{word_length_estimate} cannot be transferred to orbit length on a non-compact metric space $X$.

Lastly, below are some results about nilpotent Lie groups. See \cite[Section 11.2]{HN} for references.

\begin{lem}\label{max_torus}
	Let $G$ be a connected nilpotent Lie group. Then any maximal torus of $G$ is central in $G$; in particular, $G$ has a unique maximal torus.
\end{lem}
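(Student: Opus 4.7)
The plan is to pass to the Lie algebra level and exploit the tension between nilpotency, which forces $\mathrm{ad}$-operators to be nilpotent, and compactness of a torus, which forces certain $\mathrm{ad}$-operators to be semisimple. Let $\mathfrak{g}$ denote the Lie algebra of $G$ and $\mathfrak{t}\subset \mathfrak{g}$ the Lie algebra of a maximal torus $T\subset G$. My goal will be to show $\mathfrak{t}\subset Z(\mathfrak{g})$, from which centrality of $T$ in $G$ follows by connectedness.

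To prove $\mathrm{ad}(X)=0$ for each $X\in\mathfrak{t}$, I would argue on two fronts. First, since $\mathfrak{g}$ is nilpotent, Engel's theorem gives that $\mathrm{ad}(X)\in\mathrm{End}(\mathfrak{g})$ is a nilpotent operator. Second, the one-parameter subgroup $\{\exp(tX)\}_{t\in\mathbb{R}}$ is contained in the compact group $T$, so its image under the adjoint representation, $\mathrm{Ad}(\exp(tX))=\exp(t\,\mathrm{ad}(X))\in\mathrm{Ad}(T)$, is a bounded one-parameter subgroup of $GL(\mathfrak{g})$. Inspecting the real Jordan decomposition $\mathrm{ad}(X)=A_s+A_n$ shows boundedness forces $A_n=0$ and the eigenvalues of $A_s$ to be purely imaginary, so $\mathrm{ad}(X)$ is semisimple. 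An operator that is simultaneously nilpotent and semisimple must vanish, giving $X\in Z(\mathfrak{g})$.

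Because $G$ is connected, elements of the form $\exp(tX)$ with $X\in Z(\mathfrak{g})$ commute with every element of $G$; since $T$ is connected it is generated by $\exp(\mathfrak{t})$, so the inclusion $\mathfrak{t}\subset Z(\mathfrak{g})$ upgrades to $T\subset Z(G)$. This proves the first assertion. For uniqueness, suppose $T_1, T_2$ are two maximal tori. By what was just shown both lie in $Z(G)$, so they commute with each other, and the multiplication image $T_1T_2$ is a compact connected abelian subgroup of $G$, i.e.\ a torus containing both $T_i$. Maximality forces $T_1=T_1T_2=T_2$.

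I do not anticipate a genuine obstacle here; the statement is classical and the only step requiring any care is the claim that a bounded one-parameter subgroup of $GL(\mathfrak{g})$ has semisimple generator with purely imaginary eigenvalues, which is a routine Jordan-decomposition check. The substantive content of the lemma is entirely captured by the interaction between Engel's theorem and the compactness of $\mathrm{Ad}(T)$.
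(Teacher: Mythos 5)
The paper does not give a proof of this lemma; it simply refers the reader to Hilgert--Neeb \cite{HN}, so there is no internal argument to compare against. Your proof is a correct, self-contained argument, and it is the standard one: nilpotency of $\mathfrak{g}$ forces each $\mathrm{ad}(X)$ to be a nilpotent endomorphism, while compactness of $T$ forces $\mathrm{ad}(X)$ to be semisimple for $X\in\mathfrak{t}$, and the only endomorphism that is both nilpotent and semisimple is $0$; thus $\mathfrak{t}\subseteq Z(\mathfrak{g})$, which upgrades to $T\subseteq Z(G)$ since $\ker(\mathrm{Ad})=Z(G)$ for connected $G$. The uniqueness step (two central tori generate a compact connected abelian subgroup, hence a torus, contradicting maximality unless they coincide) is also correct.

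Two small remarks. First, the nilpotency of $\mathrm{ad}(X)$ when $\mathfrak{g}$ is nilpotent is immediate from the vanishing of the lower central series; Engel's theorem is really the converse implication, so citing it here is a harmless but slightly misdirected attribution. Second, the Jordan-decomposition check can be shortcut: $\mathrm{Ad}(T)$ is a compact subgroup of $GL(\mathfrak{g})$, hence conjugate into the orthogonal group, after which $\mathrm{ad}(X)$ is skew-symmetric and therefore manifestly semisimple with purely imaginary spectrum. Either route is sound.
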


\begin{lem}\label{nil_exp}
	Let $G$ be a connected nilpotent Lie group. Then the exponential map $\exp:\mathrm{Lie}(G)\to G$ is a smooth covering map. If in addition that $G$ is simply connected, then $\exp$ is a diffeomorphism.
\end{lem}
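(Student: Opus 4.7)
The plan is to first handle the simply connected case via the Baker--Campbell--Hausdorff (BCH) formula, and then deduce the general connected case by a covering argument.

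For the simply connected case, I would exploit the fact that on a nilpotent Lie algebra $\mathfrak{g}$ of step $l$, the BCH series
$$X * Y = X + Y + \tfrac{1}{2}[X,Y] + \tfrac{1}{12}[X,[X,Y]] - \tfrac{1}{12}[Y,[X,Y]] + \cdots$$
truncates to a polynomial map $\mathfrak{g}\times\mathfrak{g}\to\mathfrak{g}$, because every iterated bracket of length $>l$ vanishes. Standard formal identities for the BCH series then imply that $(\mathfrak{g},*)$ is a Lie group with identity $0$, inverse $-X$, and Lie algebra canonically isomorphic to $\mathfrak{g}$. Since $(\mathfrak{g},*)$ is diffeomorphic to $\mathbb{R}^{\dim\mathfrak{g}}$ it is simply connected, so by the uniqueness of the simply connected Lie group with prescribed Lie algebra, $(\mathfrak{g},*)$ is isomorphic to the universal cover $\widetilde{G}$ of $G$. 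Under this identification one-parameter subgroups $t\mapsto tX$ are straight lines (because $tX * sX = (t+s)X$), which shows that the exponential map of $(\mathfrak{g},*)$ is the identity. Hence $\exp\colon\mathfrak{g}\to\widetilde{G}$ is a diffeomorphism.

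For the general case, let $\pi\colon\widetilde{G}\to G$ be the universal covering homomorphism; it is a smooth covering map whose kernel is a discrete central subgroup of $\widetilde{G}$. By naturality of $\exp$ under Lie group homomorphisms, the diagram
$$\exp_G = \pi\circ\exp_{\widetilde{G}}$$
commutes. Since $\exp_{\widetilde{G}}$ is a diffeomorphism by the previous paragraph and $\pi$ is a smooth covering, the composition $\exp_G$ is itself a smooth covering map. When $G$ is already simply connected, $\pi$ is the identity and so $\exp_G$ is a diffeomorphism.

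There is no real obstacle here, as the result is classical and only referenced from the textbook \cite{HN}; the only subtle point is ensuring that the BCH polynomial actually defines a group law, which is a formal identity valid in any nilpotent Lie algebra and is typically verified by working inside the universal enveloping algebra or by appealing to the formal group law associated to $\mathfrak{g}$.
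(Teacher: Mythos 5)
Your proof is correct. Note that the paper itself does not prove Lemma \ref{nil_exp}; it simply cites \cite[Section 11.2]{HN} for this and the preceding lemma, so there is no internal argument to compare against. The route you take --- truncating the Baker--Campbell--Hausdorff series to a polynomial group law $*$ on $\mathfrak{g}$, invoking the uniqueness of the simply connected Lie group with a given Lie algebra to identify $(\mathfrak{g},*)$ with $\widetilde{G}$, observing that one-parameter subgroups are the lines $t\mapsto tX$ so that $\exp_{\widetilde{G}}$ is the identity under this identification, and then descending to $G$ via naturality of $\exp$ and the covering $\pi\colon\widetilde{G}\to G$ --- is the standard Mal'cev-type argument and is exactly the kind of proof one finds in \cite{HN}. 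The one subtlety you flag (that the truncated BCH polynomial genuinely defines an associative group law) is real but classical, and your suggestion of verifying it inside a nilpotent quotient of the universal enveloping algebra is the usual way to handle it. A small notational remark: naturality gives $\pi\circ\exp_{\widetilde{G}} = \exp_G\circ d\pi$, so the commuting triangle you write down implicitly uses the Lie algebra isomorphism $d\pi$ to identify $\mathrm{Lie}(\widetilde{G})$ with $\mathrm{Lie}(G)$; this is fine but worth saying. For completeness, an alternative proof of the simply connected case that avoids BCH shows directly that $d\exp_X$ is invertible for every $X$ (since $\mathrm{ad}_X$ is nilpotent, the endomorphism $\tfrac{1-e^{-\mathrm{ad}_X}}{\mathrm{ad}_X}$ has all eigenvalues equal to $1$), then proves surjectivity and injectivity of $\exp$ by induction on the nilpotency step; either route is acceptable.
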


For an element $g\not= e$ in a connected and simply connected nilpotent Lie group $G$, according to Lemma \ref{nil_exp}, $g$ uniquely determines a parameter subgroup $\{\exp(tv)| t\in\mathbb{R} \}$ of $G$, where $v \in \mathrm{Lie}(G)$ such that $\exp(v)=g$. 

\begin{lem}\label{nil_G0_commute_cpt}
	Let $G$ be nilpotent Lie group. Then $G_0$ commutes with any compact subgroup $K$ of $G$.
\end{lem}

\begin{proof}
	For readers' convenience, we give a proof of this standard result in nilpotent Lie groups. Let $\mathfrak{g}$ be the Lie algebra of $G$ and let 
	$$\mathrm{Ad}:G\to \mathrm{Aut}(\mathfrak{g})$$
	be the adjoint representation. For any $g\in G$ and any $X\in \mathfrak{g}$, by nilpotency we have
	$$[g,[...,[g,\exp(tX)]]...]=\mathrm{id}.$$
	Differentiating both sides, we have $(\mathrm{Ad}(g)-I)^k=0$, that is, $\mathrm{Ad}(g)$ is unipotent. Hence under a suitable basis, $\mathrm{Ad}(g)$ is upper triangular with eigenvalues $1$. In particular, for any $\mathrm{Ad}(g)\not= I$, it must generate a non-compact subgroup. Therefore, if $g\in K$, a compact subgroup of $G$, then $\mathrm{Ad}(g)=I$. It follows that $g\in K$ commutes with any element $\exp(tX)\in G_0$. 	
\end{proof}

\subsection{Escape rate}

\cite{Pan_es0} introduces the notion of escape rate by comparing the size of representing geodesic loops to their lengths. Let $M$ be an open manifold with an infinite $\pi_1(M)$. For every element $\gamma\in \pi_1(M,p)$, we draw a representing loop $c_\gamma$ at $p$ such that it has the minimal length among all loops at $p$ in the homotopy class $\gamma$. $c_\gamma$ is always a geodesic loop at $p$. We denote
$$|\gamma|=d(\gamma \tilde{p},\tilde{p})=\mathrm{length}(c_\gamma).$$
The escape rate of $(M,p)$ is defined as
$$E(M,p)=\limsup_{|\gamma|\to\infty} \dfrac{d_H(p,c_\gamma)}{|\gamma|},$$
where $d_H(p,c_\gamma)$ is the Hausdorff distance between $c_\gamma$ and $p$ (in other words, the smallest radius $R$ such that the closed ball $\overline{B_R(p)}$ covers $c_\gamma$.) If $\pi_1(M)$ is finite, then we set $E(M,p)=0$ as a convention.

It is clear that $E(M,p)$ takes its value in $[0,1/2]$. In \cite{Sor}, Sormani proved that if $\pi_1(M,p)$ is not finitely generated, then there is a sequence of element $\gamma_i$ (the short generators) such that their representing loops $c_{\gamma_i}$ are minimal up to halfway. Therefore, in the terminology of escape rate, we have

\begin{thm}\cite{Sor}\label{escape_onehalf}
	Let $M$ be an open manifold with $E(M,p)<1/2$, then $\pi_1(M)$ is finitely generated.
\end{thm}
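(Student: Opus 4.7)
The plan is to argue by contrapositive: assuming $\pi_1(M,p)$ is not finitely generated, I would produce a sequence of group elements whose shortest representing loops are forced to escape halfway to their own length, thereby driving the escape rate up to at least $1/2$.

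First, I would carry out Sormani's short generator construction. Set $\gamma_1$ to minimize $|\gamma|$ over $\gamma\in\pi_1(M,p)\setminus\{\mathrm{id}\}$, and inductively let $\gamma_k$ minimize $|\gamma|$ over the complement $\pi_1(M,p)\setminus\langle\gamma_1,\ldots,\gamma_{k-1}\rangle$. These minimizers exist because $\pi_1(M,p)$ acts properly discontinuously on $\widetilde M$, so the set $\{\gamma:|\gamma|\le R\}$ is finite for every $R$. Since $\pi_1(M,p)$ is assumed not finitely generated, the construction never terminates, and by the same finiteness one has $|\gamma_k|\to\infty$.

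The heart of the argument is the halfway lemma: for each $\gamma_k$, the minimal representing loop $c_{\gamma_k}:[0,L]\to M$ (with $L=|\gamma_k|$) is minimizing on each half, so that the midpoint $q=c_{\gamma_k}(L/2)$ realizes $d(p,q)=L/2$. I would prove this by contradiction. If $c_{\gamma_k}|_{[0,L/2]}$ fails to minimize, choose a strictly shorter curve $\sigma$ from $p$ to $q$ and form the loops
$$\beta:=c_{\gamma_k}|_{[0,L/2]}*\sigma^{-1},\qquad \alpha:=\sigma*c_{\gamma_k}|_{[L/2,L]}$$
at $p$. Both have length strictly less than $L$, and their concatenation $\beta*\alpha$ is freely homotopic to $c_{\gamma_k}$, so $\gamma_k=[\beta]\cdot[\alpha]$ in $\pi_1(M,p)$. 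By the defining minimality of $\gamma_k$, every class of length less than $|\gamma_k|$ must lie in $\langle\gamma_1,\ldots,\gamma_{k-1}\rangle$; hence both $[\beta]$ and $[\alpha]$ lie there, which then forces $\gamma_k\in\langle\gamma_1,\ldots,\gamma_{k-1}\rangle$, contradicting its selection. The other half is handled symmetrically.

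With the halfway lemma in hand, $d_H(p,c_{\gamma_k})\ge d(p,q)=|\gamma_k|/2$, and since $|\gamma_k|\to\infty$ we obtain
$$E(M,p)=\limsup_{|\gamma|\to\infty}\frac{d_H(p,c_\gamma)}{|\gamma|}\ge \frac{1}{2},$$
contradicting the hypothesis $E(M,p)<1/2$. The main obstacle I expect is the halfway lemma: one must set up the cut-and-paste so that both replacement loops are strictly shorter than $c_{\gamma_k}$, and then exploit the key structural feature of the short generator construction, namely that the minimum is taken over the complement of a subgroup so that any shorter element is necessarily inside that subgroup. This is precisely what converts a purely local shortcut at the midpoint into a global contradiction with the inductive choice of $\gamma_k$.
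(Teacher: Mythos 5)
Your proposal is a correct reconstruction of Sormani's halfway lemma argument, which is exactly the proof the paper is invoking by citing \cite{Sor} (the paper does not reprove the statement; it simply records Sormani's conclusion that the short generators have representing loops minimal up to halfway, hence $d_H(p,c_{\gamma_k})=|\gamma_k|/2$ and $E(M,p)\ge 1/2$). Your short-generator construction, the cut-and-paste at the midpoint, and the observation that any class of smaller displacement must already lie in $\langle\gamma_1,\ldots,\gamma_{k-1}\rangle$ are the standard and correct ingredients, so the approach matches the intended one.
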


It is unclear to the author whether the converse of Theorem \ref{escape_onehalf} holds for nonnegative Ricci curvature.

Throughout the paper, we always assume that $M$ is an open (non-compact and complete) Riemannian manifold with $\mathrm{Ric}\ge 0$ and $E(M,p)\not=1/2$; in particular, $\pi_1(M)$ is finitely generated. When $\pi_1(M)$ is an infinite group, by \cite{Mil,Gro_poly} and Proposition \ref{torsion_free_index}, $\pi_1(M)$ has a finitely generated torsion-free nilpotent subgroup $\mathcal{N}$ of finite index.

We gather some elementary lemmas from \cite{Pan_cone} about the basics of escape rate.

\begin{lem}\cite[Lemma 1.5]{Pan_cone}\label{escape_index}
	Let $(M,p)$ be an open manifold with $\mathrm{Ric}\ge 0$ and let $F: (\hat{M},\hat{p})\to (M,p)$ be a finite cover. Then $E(\hat{M},\hat{p})\le E(M,p)$.
\end{lem}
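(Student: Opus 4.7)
The plan is to exploit the common universal cover $\widetilde{M}$ of $M$ and $\hat{M}$ and to control how replacing $\Gamma=\pi_1(M,p)$ by its finite-index subgroup $\hat{\Gamma}=\pi_1(\hat{M},\hat{p})$ perturbs the Hausdorff distance appearing in the escape rate. Since any $\hat{\gamma}\in\hat{\Gamma}$ and its image $\gamma\in\Gamma$ act identically as deck transformations of $\widetilde{M}$, a minimizing geodesic $\tilde{c}$ from $\tilde{p}$ to $\gamma\tilde{p}$ in $\widetilde{M}$ simultaneously projects to minimal representing loops $c_\gamma\subset M$ and $c_{\hat{\gamma}}\subset\hat{M}$, and $|\hat{\gamma}|_{\hat{M}}=|\gamma|_M=\mathrm{length}(\tilde{c})$. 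This coherent choice lets me compare both Hausdorff distances along one common curve in $\widetilde{M}$.

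Next I would fix right-coset representatives $s_1=e,s_2,\ldots,s_k$ of $\hat{\Gamma}$ in $\Gamma$ and set $D:=\max_i d_{\widetilde{M}}(\tilde{p},s_i\tilde{p})$, a constant depending only on the cover. Using the identifications $d_M(\pi_M(\tilde{q}),p)=\min_{g\in\Gamma}d_{\widetilde{M}}(\tilde{q},g\tilde{p})$ and $d_{\hat{M}}(\pi_{\hat{M}}(\tilde{q}),\hat{p})=\min_{h\in\hat{\Gamma}}d_{\widetilde{M}}(\tilde{q},h\tilde{p})$, the key point is the pointwise estimate
\[
\min_{h\in\hat{\Gamma}}d_{\widetilde{M}}(\tilde{q},h\tilde{p}) \;\le\; \min_{g\in\Gamma}d_{\widetilde{M}}(\tilde{q},g\tilde{p})+D \qquad\text{for every }\tilde{q}\in\widetilde{M}.
\]
Indeed, if $g\in\Gamma$ realizes the right-hand minimum and one writes $g=hs_i$ with $h\in\hat{\Gamma}$, then $d_{\widetilde{M}}(g\tilde{p},h\tilde{p})=d_{\widetilde{M}}(s_i\tilde{p},\tilde{p})\le D$ because $h$ acts by isometry, and the triangle inequality gives the claim. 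Maximizing over $\tilde{q}\in\tilde{c}$ yields the Hausdorff-distance comparison
\[
d_H(\hat{p},c_{\hat{\gamma}}) \;\le\; d_H(p,c_\gamma)+D.
\]

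To conclude, dividing by $|\hat{\gamma}|=|\gamma|$ makes the additive constant $D/|\gamma|$ negligible as $|\hat{\gamma}|\to\infty$, and because $\hat{\Gamma}\subseteq\Gamma$ the limsup on $\hat{\Gamma}$ is dominated by that on $\Gamma$, so
\[
E(\hat{M},\hat{p}) \;\le\; \limsup_{\gamma\in\hat{\Gamma},\,|\gamma|\to\infty}\frac{d_H(p,c_\gamma)}{|\gamma|} \;\le\; E(M,p).
\]
The main mild subtlety I anticipate is coordinating the choice of minimal representing loops in $M$ and in $\hat{M}$ so that the Hausdorff-distance estimate can be made on a single geodesic; this is handled transparently by working on the universal cover, after which the proof reduces to the bounded coset perturbation above. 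The hypothesis $\mathrm{Ric}\ge 0$ is not actually used and the argument goes through for arbitrary finite Riemannian covers.
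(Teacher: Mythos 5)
Your proof is correct and takes what I believe is essentially the same route as the cited proof in \cite{Pan_cone}: pass to the common universal cover $\widetilde{M}$, observe that the orbit $\hat{\Gamma}\tilde{p}$ is at bounded Hausdorff distance $D=\max_i d(\tilde{p},s_i\tilde{p})$ from the full orbit $\Gamma\tilde{p}$ because $[\Gamma:\hat{\Gamma}]<\infty$, and transfer this to the additive bound $d_{\hat{M}}(\hat{q},\hat{p})\le d_M(q,p)+D$ on quotient distances, which vanishes after dividing by $|\gamma|\to\infty$. Your observations that $\mathrm{Ric}\ge 0$ is not used and that the argument works for any finite Riemannian cover are also correct. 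One small point worth flagging: the definition of $E(M,p)$ implicitly fixes a choice of minimal representing loop $c_\gamma$ for each $\gamma$; your argument compares $d_H(\hat{p},c_{\hat{\gamma}})$ with $d_H(p,\bar{c})$ where $\bar{c}$ is the projection to $M$ of the lift of $c_{\hat{\gamma}}$, and $\bar{c}$ is \emph{a} minimal representing loop for $\hat{\gamma}\in\Gamma$ but need not coincide with the loop used in the definition of $E(M,p)$. The cleanest fix is to note that the escape rate's value is unchanged if one instead takes, for each $\gamma$, the supremum of $d_H(p,c)$ over all minimal representing loops $c$ (the limsup over $\gamma$ absorbs this), or simply to fix the choice in $M$ first and then lift; either way the inequality survives, so this is a presentational rather than a substantive gap.
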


In practice, we choose a torsion-free nilpotent subgroup $\mathcal{N}$ of $\pi_1(M,p)$ with finite index and set $\hat{M}=\widetilde{M}/\mathcal{N}$ as a finite cover of $M$. By Lemma \ref{escape_index}, we have 
$$E(\hat{M},\hat{p})\le E(M,p)<1/2,\quad \pi_1(\hat{M},\hat{p})=\mathcal{N}.$$
Therefore, without loss of generality, we can replace $(M,p)$ by $(\hat{M},\hat{p})$ and then assume that $\pi_1(M,p)=\mathcal{N}$ is a finitely generated torsion-free nilpotent group. 

\begin{lem}\cite[Lemma 2.1 and Proposition 2.2]{Pan_cone}\label{midpt_orb}
	Let $(Y,y,G)\in\Omega(\widetilde{M},\Gamma)$. For any point $gy\in Gy$ that is not $y$, there is a minimal geodesic $\sigma$ from $y$ to $gy$ and an orbit point $g'y\in Gy$ such that
	$$d(m,g'y)\le E\cdot d(y,gy),$$
	where $m$ is the midpoint of $\sigma$. As a consequence, the orbit $Gy$ is connected for all $(Y,y,G)\in \Omega(\widetilde{M},\Gamma)$.
\end{lem}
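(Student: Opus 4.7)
The plan is to transport the midpoint relation back to $\widetilde{M}$ along the equivariant Gromov-Hausdorff convergence realizing $(Y,y,G)$ as an asymptotic cone, invoke the definition of the escape rate on the approximating geodesic loops, and then pass to the limit; connectedness will follow from iterating the midpoint property. Fix $r_i \to \infty$ with $(r_i^{-1}\widetilde{M},\tilde{p},\Gamma) \overset{GH}{\to} (Y,y,G)$, and pick $g_i \in \Gamma$ whose rescaled orbit points track $gy$. Since $d(gy,y) > 0$, the lengths $|g_i| = d(g_i\tilde{p},\tilde{p})$ diverge to infinity, which is exactly the regime where $E = E(M,p)$ controls the shortest representing loops.

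For each $i$, let $c_{g_i}$ be a shortest representing geodesic loop at $p$ in the class $g_i$, and lift it to a minimal geodesic $\tilde{c}_{g_i}$ from $\tilde{p}$ to $g_i\tilde{p}$. Its midpoint $m_i$ projects to the midpoint $q_i$ of $c_{g_i}$, so
\[
d_M(p,q_i)\le d_H(p,c_{g_i})\le (E+\epsilon_i)\,|g_i|
\]
for some $\epsilon_i\to 0$, by the definition of $E$ as a $\limsup$ together with $|g_i|\to\infty$. Lifting $p$ through the fiber over $q_i$ singles out an element $h_i\in\Gamma$ with $d(h_i\tilde{p},m_i)\le (E+\epsilon_i)\,|g_i|$. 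Rescaling by $r_i^{-1}$ and invoking Arzel\`a--Ascoli compactness for unit-speed minimizers, we extract a subsequence along which $\tilde{c}_{g_i}$ converges to a minimal geodesic $\sigma$ from $y$ to $gy$ in $Y$, with midpoint $m = \lim m_i$. The estimate $r_i^{-1} d(h_i\tilde{p},\tilde{p})\le (\tfrac{1}{2}+E+\epsilon_i)\,r_i^{-1}|g_i|$ keeps the $h_i$ inside a bounded $G(R)$-window, so equivariant GH convergence produces a further subsequence with limit $g'\in G$ satisfying $d(g'y,m)\le E\cdot d(y,gy)$.

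For connectedness of $Gy$, first note that $G$ is a Lie group by Colding-Naber, and the resulting isometric action on the proper metric space $Y$ is proper, so $Gy$ is closed. Suppose for contradiction $Gy = C\sqcup D$ with $C,D$ closed in $Gy$, nonempty and disjoint. Pick $y\in C$ and some $gy\in D$; apply the midpoint property to $(y,gy)$ to obtain an orbit point $g'y$ with $d(g'y,m)\le E\,d(y,gy)$. Whichever side $g'y$ lies on, replace the corresponding endpoint by $g'y$ to get a new pair still split across $C$ and $D$, now at distance at most $(\tfrac{1}{2}+E)\,d(y,gy)$. Iterating yields sequences $a_n\in C$, $b_n\in D$ with $d(a_n,b_n)\le (\tfrac{1}{2}+E)^n\,d(y,gy)\to 0$ because $E<1/2$. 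Since the iterates remain in a fixed bounded subset of the proper space $Y$, one obtains compactness of $C\cap K$ and $D\cap K$ within a closed ball $K$, forcing their distance to be positive and contradicting $d(a_n,b_n)\to 0$.

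\textbf{Main obstacle.} The chief technical issue is confirming that, after rescaling, the midpoints $m_i$ converge to the midpoint of a genuine minimal geodesic in $Y$ — this relies on the Arzel\`a--Ascoli compactness for unit-speed minimizers in GH-converging sequences of length spaces together with completeness of $Y$ — and that the auxiliary elements $h_i$ remain within a fixed $G(R)$-window so that equivariant GH convergence produces an honest group element $g'\in G$ rather than an escape to infinity; the bound $r_i^{-1}d(h_i\tilde{p},\tilde{p})\le (\tfrac{1}{2}+E+\epsilon_i)\,r_i^{-1}|g_i|$ is precisely what prevents this escape.
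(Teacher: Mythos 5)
Your proposal is correct, and it is essentially the argument implicit in the paper's citation: the paper simply invokes \cite[Lemma 2.1 and Proposition 2.2]{Pan_cone} and remarks that "inspecting its proof" yields the sharper bound $\le E\cdot d(y,gy)$ in place of $<(1/2)\cdot d(y,gy)$, which is exactly what you obtain by reading off the $\limsup$ definition of $E(M,p)$ on the lifted minimal loops $\tilde c_{g_i}$, transporting the fiber point $h_i\tilde p$ near the midpoint to the limit via equivariant GH convergence, and using the window bound $r_i^{-1}d(h_i\tilde p,\tilde p)\le(\tfrac12+E+\epsilon_i)r_i^{-1}|g_i|$ to ensure $h_i\to g'\in G$. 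The connectedness argument via dyadic midpoint iteration, staying in a bounded ball since $\sum(\tfrac12+E)^n<\infty$, together with closedness of $Gy$ in the proper space $Y$, is likewise the standard route (and, applied to pairs $(a_n,b_n)$ rather than $(y,gy)$, implicitly uses $G$-equivariance of the midpoint property, which is fine since it is an isometric statement).
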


The inequality in \cite[Lemma 2.1]{Pan_cone} states $d(m,g'y)<(1/2)\cdot d(y,gy)$. However, inspecting its proof, it is clear that a stronger inequality as stated above holds.

Lastly, we mention that the notion of escape rate and its properties can be naturally extended to group actions on non-compact length metric spaces.

\begin{defn}\label{defn_es_rate_general}
	Let $(X,p)$ be a complete non-compact length metric spaces and let $G$ be a group that acts effectively and isometrically on $X$. For each $g\in G$, we denote $c_g$ as a minimal geodesic from $p$ to $g\cdot p$ and we put
	$$|g|=d(p,g\cdot p)=\mathrm{length}(c_g).$$
	We define $E(X,p,G)$, the \textit{escape rate} of $(X,p,G)$, by
	$$E(X,p,G):=\limsup_{|g|\to \infty} \dfrac{R(c_g)}{|g|},$$
	where $R(c_g)$ is the infimum of all $R>0$ such that $c_g$ is contained in the $R$-tubular neighborhood of $G\cdot p$. If the orbit $G\cdot p$ is bounded, then we set $E(X,p,G)=0$ as a convention.
\end{defn}

\section{Motivating examples}\label{sec_exmp}


To better motivate Theorem A, we have a brief review of Wei's construction \cite{Wei} of open manifolds with $\mathrm{Ric}>0$ and torsion-free nilpotent fundamental groups. We explain the choice of warping functions and its relation to Hausdorff dimension of asymptotic orbits. Also see \cite{PW_ex}, \cite[Appendix B]{Pan_es0}, and \cite[Appendix A]{Pan_cone}.

For simplicity, we use the discrete Heisenberg $3$-group 
$$\Gamma=\left\{ 
\begin{pmatrix}
	1 & a & c\\
	0 & 1 & b\\
	0 & 0 & 1
\end{pmatrix}	
\bigg| a,b,c\in\mathbb{Z}  \right\}\subseteq 
\left\{ 
\begin{pmatrix}
	1 & a & c\\
	0 & 1 & b\\
	0 & 0 & 1
\end{pmatrix}	
\bigg| a,b,c\in\mathbb{R}  \right\}=:\widetilde{N}.$$
$\Gamma$ is torsion-free nilpotent with nilpotency step $2$. Let $\mathfrak{n}$ be the Lie algebra of $\widetilde{N}$. $\mathfrak{n}$ has a basis
$$X_0=\begin{pmatrix}
	0 & 1 & 0\\
	0 & 0 & 0\\
	0 & 0 & 0
\end{pmatrix}, \quad X_1=\begin{pmatrix}
	0 & 0 & 0\\
	0 & 0 & 1\\
	0 & 0 & 0
\end{pmatrix}, \quad X_2=\begin{pmatrix}
	0 & 0 & 1\\
	0 & 0 & 0\\
	0 & 0 & 0
\end{pmatrix}$$ 
with $[X_0,X_1]=X_2$ as the only nontrivial Lie bracket relation.

Let
$$h_0(r)=h_1(r)=(1+r^2)^{-\alpha},\quad h_2(r)=(1+r^2)^{-\beta},$$
where $\alpha,\beta>0$ to be specified later. We define a family of inner products with parameter $r\in [0,\infty)$ on $\mathfrak{n}$ by
$$||X_j||_r:=h_j(r) $$
and setting $X_i$ and $X_j$ orthogonal when $i\not= j$. This naturally defines a family of left-invariant metrics $g_r$ on $\widetilde{N}$. Note
$$||[X_0,X_1]||=||X_2||=(1+r^2)^{-\beta}=(1+r^2)^{-\beta+2\alpha}||X_0||_r||X_1||_r.$$
It follows from basic curvature formulas for left-invariant metrics on Lie group that
$$|\mathrm{sec}(g_r)|\le C\cdot (1+r^2)^{-2\beta+4\alpha},$$
where $C$ is a constant. Therefore, for $g_r$ to be almost flat as $r\to\infty$, we should require
$\beta>2\alpha$.

Next, we define a doubly warped product 
$$\widetilde{M}=[0,\infty) \times_{f(r)} S^{k-1} \times (\widetilde{N},g_r), \quad g= dr^2 + f(r)^2 ds_{k-1}^2 + g_r,$$
where $f(r)=r(1+r^2)^{-1/4}$. $\widetilde{M}$ is diffeomorphic to $\mathbb{R}^k \times \widetilde{N}$. Following a similar computation as in \cite{Wei,BW}, for $g$ to have $\mathrm{Ric}>0$ when $k$ is sufficiently large, the term
$|\mathrm{Ric}(g_r)|$ should be comparable to or smaller than $(1+r^2)^{-1}$; equivalently, we require that
$$\beta\ge 2\alpha+1/2.$$
To construct $M$, noting that $\widetilde{N}$ naturally acts on $\widetilde{M}$ freely by isometries, we can take the quotient Riemannian manifold $M=\widetilde{M}/\Gamma$. Then it is clear that $M$ satisfies $\mathrm{Ric}>0$ and $\pi_1(M)=\Gamma$.

We check the Hausdorff dimension of asymptotic orbits for the above example. Let $\gamma_j=\exp(X_j)\in \Gamma$ and let $\tilde{p}=(0,\star,\mathrm{id})\in \widetilde{M}$. We use $|\cdot|$ to denote the displacement of an isometry at $\tilde{p}$. Following the length estimate in \cite[Section 1.2]{PW_ex}, as $b\to\infty$ we have 
$$|\gamma_0^b|=|\gamma_1^b|\sim b^{\frac{1}{1+2\alpha}},\quad |\gamma_2^{(b^2)}|=|[\gamma_0^b,\gamma_1^b]|\le C\cdot b^{\frac{1}{1+2\alpha}}.$$
$|\gamma_2^b|$ can also be estimated by $h_2(r)$. Since 
$$\dfrac{1}{1+2\beta}\le \dfrac{1}{2(1+2\alpha)},$$
we have 
$$|\gamma_2^b|\sim b^{\frac{1}{1+2\beta}}.$$
After blowing-down
$$(r_i^{-1}\widetilde{M},\tilde{p},\langle \gamma_j \rangle,\Gamma)\overset{GH}\longrightarrow (Y,y,H_j,G),$$
each $H_j$ is a closed $\mathbb{R}$-subgroup of $G$. It follows from the length estimate that the asymptotic orbits have Hausdorff dimension
$$\dimH(H_0y)=\dimH(H_1 y)=1+2\alpha,\quad \dimH(H_2 y)=1+2\beta >2.$$

\begin{rem}\label{rem_abel_limit}
As observed in \cite[Appendix A]{Pan_cone}, we also mention that the algebraic structure of $G$ depends on $\beta$. In fact, if $\beta=2\alpha+1/2$, then $G$ is isomorphic to the $3$-dimensional Heisenberg group $\widetilde{N}$; if $\beta>2\alpha+1/2$, then $G$ is isomorphic to the abelian group $\mathbb{R}^3$.
\end{rem}

\section{Equivariant Gromov-Hausdorff distance gaps}\label{sec_egh_gap}

The main goal of this section is to establish equivariant Gromov-Hausdorff distance gaps between different types of equivariant asymptotic cones of $(\widetilde{M},\mathcal{N})$ (Propositions \ref{eGH_gap_1} and \ref{eGH_gap_2}).

In subsection \ref{subsec_egh_tunnel}, we introduce the notion of tunnels, that is, continuous curves inside the orbit $Gy$. Using the condition $E(M,p)<1/2$, we study controls on the size of tunnels in the asymptotic cones and the convergence of tunnels. In subsection \ref{subsec_egh_idea}, we give the statements of distance gaps and a rough idea of the proof by using the convergence of tunnels and large fiber lemma. Subsection \ref{subsec_egh_adapt} is a technical part that we introduce some tools so that we can carry out the rough idea in general nilpotent group actions; in particular, we introduce the notion of adapted bases and adapted maps. We prove the distance gaps Propositions \ref{eGH_gap_1} and \ref{eGH_gap_2} in subsection \ref{subsec_egh_proof}.

\subsection{Tunnels}\label{subsec_egh_tunnel}

In this subsection, we use $E$ exclusively to denote the value of $E(M,p)<1/2$.

\begin{defn}\label{def_tunnel}
	Let $(Y,y,G)$ be a space. A \textit{tunnel} is a continuous path $\sigma:[0,1] \to Gy$. We say that $(Y,y,G)$ is \textit{$C$-tunneled} for some constant $C\in [1,\infty)$, if for every orbit point $gy\in Gy$ with $d(gy,y)=:d$, there is a tunnel $\sigma$ from $y$ to $gy$ that is contained in $\overline{B_{Cd}}(y)$.
\end{defn}

\begin{rem}
We remark that, in general, it is possible that every nontrivial curve in $Gy$ is has infinite length. For example, as constructed in \cite{PW_ex} and clarified in \cite[Remark 3.9]{DHPW}, the Grushin halfplane $\mathbb{G}(\alpha)$, or more generally the Grushin halfplane $\mathbb{G}(\alpha_1,...,\alpha_k)$ mentioned in the introduction, are asymptotic cones of open manifolds with $\mathrm{Ric}\ge 0$. The orbit $Gy$ is exactly $\mathbb{R}^k \times \{0\}$ under the  $\mathbb{R}^k \times [0,\infty)$-coordinate of $\mathbb{G}(\alpha_1,...,\alpha_k)$. When all $\alpha_i$ are positive, every nontrivial curve in $Gy$ is not rectifiable. Therefore, we use size, instead of length, to measure tunnels in Definition \ref{def_tunnel}.
\end{rem}

\begin{prop}\label{C_tunnel}
	Given $E\in[0,1/2)$, there is a constant $C_0(E)$ such that the following holds.
	
	Let $M$ be an open manifold with $\mathrm{Ric}\ge 0$ and $E(M,p)=E$. Then any $(Y,y,G)\in\Omega(\widetilde{M},\Gamma)$ is $C_0(E)$-tunneled.
\end{prop}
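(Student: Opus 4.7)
The plan is to construct the tunnel by iterated dyadic midpoint subdivision, using Lemma \ref{midpt_orb} as the halving step. Set $\lambda := 1/2 + E < 1$. Since the $G$-action is by isometries, translating by $a^{-1}$ upgrades Lemma \ref{midpt_orb} to the statement that for any pair of orbit points $a, b \in Gy$ at distance $\delta$, there exists $c \in Gy$ with $d(a,c) \le \lambda\delta$ and $d(b,c) \le \lambda\delta$. Fix $gy \in Gy$ with $d := d(y, gy) > 0$. Starting from $(x_0, x_1) := (y, gy)$, I would recursively insert such ``midpoints'' along every dyadic interval, producing orbit points $\{x_t\}_{t \in D}$ indexed by dyadic rationals $D \subset [0,1]$; a straightforward induction shows that consecutive pairs at level $n$ satisfy $d(x_{k/2^n}, x_{(k+1)/2^n}) \le \lambda^n d$.

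For the size bound, each new point inserted at level $n+1$ lies within $\lambda^{n+1} d$ of one of its parent endpoints, which are already controlled at level $n$. Letting $A_n := \max_{t \in D_n} d(y, x_t)$, this gives $A_{n+1} \le A_n + \lambda^{n+1} d$, and telescoping the geometric series yields
\[
d(y, x_t) \le d \sum_{k=0}^{\infty} \lambda^k = \dfrac{d}{1-\lambda} = \dfrac{2d}{1-2E}
\]
for every dyadic $t$. This identifies the target constant $C_0(E) := 2/(1-2E)$. For continuity, the exponential decay of pair distances is the key: within any level-$n$ dyadic interval the sub-construction is a rescaled copy of the global one at initial scale $\lambda^n d$, so the preceding size estimate applied internally gives endpoint-to-interior distances $\le \lambda^n d /(1-\lambda)$. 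Combining at most two such bounds with a single level-$n$ hop shows $d(x_s, x_t) \le 2\lambda^n d /(1-\lambda)$ whenever $|s-t| \le 2^{-n}$. Since $\lambda^n = 2^{-n\alpha}$ with $\alpha := -\log_2 \lambda > 0$, this is H\"older continuity of exponent $\alpha$, so $t \mapsto x_t$ extends uniquely to a continuous map $\sigma : [0,1] \to Y$ whose image lies in $\overline{B_{C_0(E) d}}(y)$.

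The remaining point, and the only mild obstacle, is that the definition of a tunnel requires $\sigma(t) \in Gy$, not merely in its closure. This is handled by noting that $G$ is a closed subgroup of $\mathrm{Isom}(Y)$ acting on the proper length space $Y$ (as a pointed Gromov--Hausdorff limit of manifolds with $\mathrm{Ric} \ge 0$), so $G$ acts properly on $Y$; stabilizers are then compact and the orbit map $G/G_y \to Gy$ is a homeomorphism onto a closed subset, giving $\overline{Gy} = Gy$ and hence $\sigma(t) \in Gy$ for every $t$. The halving and the displacement bound together then deliver a tunnel from $y$ to $gy$ inside $\overline{B_{C_0(E) d}}(y)$, finishing the proof. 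The substantive input is purely Lemma \ref{midpt_orb} combined with the contraction $\lambda < 1$ forced by $E < 1/2$; the rest is bookkeeping on a binary tree.
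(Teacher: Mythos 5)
Your proof is correct and takes essentially the same route as the paper: iterated dyadic bisection via Lemma \ref{midpt_orb}, geometric series for the size bound, and uniform (indeed H\"older) continuity from the exponential decay of pair distances to extend from dyadics to $[0,1]$. The only real addition is that you explicitly justify $\sigma(t)\in Gy$ for non-dyadic $t$ by noting $Gy$ is closed (via properness of the action on the proper space $Y$), a point the paper leaves implicit; your constant $2/(1-2E)$ is marginally larger than the paper's $\sum_{j\ge1}(E+1/2)^j$ but both are valid.
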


\begin{proof}
	We put 
	$$C_0(E)=\sum_{j=1}^\infty \left(E+1/2\right)^j<\infty.$$
	Let $gy\in Gy-\{y\}$ with $d:=d(y,gy)$. We use Lemma \ref{midpt_orb} to construct a desired tunnel $\sigma$ from $y$ to $gy$ as follows. We define $\sigma(0)=y$ and $\sigma(1)=gy$. By Lemma \ref{midpt_orb}, there is an orbit point $g_{(1,1)}y\in Gy$ such that
	$$d(g_{(1,1)}y,y)\le \left(E+1/2\right)d,\quad d(g_{(1,1)}y,gy)\le \left(E+1/2\right)d.$$
	We define $\sigma(1/2)=g_{(1,1)}y$. Next, we set
	$$P_j=\{k/2^j|k=0,1,...,2^j\}.$$
	Inductively, suppose that we have defined $\sigma$ on $P_j$ with $\sigma(k/2^j)=g_{(k,j)}y$ for some $g_{(k,j)}\in G$ such that
	$$d(g_{(k-1,j)}y,g_{(k,j)}y)\le \left(E+1/2\right)^{j}d.$$
	for all $k=1,...,2^j$. Then we use Lemma \ref{midpt_orb} again to define $\sigma$ on $P_{j+1}$ as follows. 	When $k$ is even, then $k/2^{j+1}\in P_j$; hence $\sigma(k/2^{j+1})$ has been defined in the previous steps. When $k$ is odd, we assign $\sigma(k/2^{j+1})$ as an orbit point $g_{(k,j+1)}y$ with
	$$d(g_{(k,j+1)}y,g_{(\frac{k-1}{2},j)}y)\le \left(E+1/2\right)d(g_{(\frac{k-1}{2},j)}y,g_{(\frac{k+1}{2},j)}y)\le \left(E+1/2\right)^{j+1}d,$$
	$$d(g_{(k,j+1)}y,g_{(\frac{k+1}{2},j)}y)\le \left(E+1/2\right)^{j+1}d.$$

	So far, we have defined $\sigma$ on $\cup_j P_j$ with image in $Gy$. We show that $\sigma$ is uniform continuous on $\cup_j P_j$. In fact, for any $\epsilon>0$, let $J\in\mathbb{N}$ large with 
	$$\sum_{j=J}^\infty (1/2+E)^j\le\epsilon/2,$$ 
	and we choose $\delta=1/2^{J+1}$. Then for any $t_1,t_2\in[0,1]$ with $|t_1-t_2|\le \delta$, there is some $k_0/2^{J}\in P_{J}$ such that 
	$$\max\{|k_0/2^{J}-t_1|,|k_0/2^{J}-t_2|\}|\le 1/2^J.$$
	Then by construction of $\sigma$,
	\begin{align*}
		d(\sigma(t_1),\sigma(t_2))\le& d(\sigma(t_1),\sigma(k_0/2^{J}))+d(\sigma(k_0/2^{J}),\sigma(t_2))\\
		\le & 2\sum_{j=J}^\infty (1/2+E)^j\le \epsilon.
	\end{align*}
	This verifies that $\sigma$ is uniform continuous on $\cup_j P_j$, thus $\sigma$ extends to a continuous path from $y$ to $gy$ in $Gy$.
	
	Lastly, observe that by construction, 
	$$d(y,\sigma(t))\le \sum_{j=1}^\infty (1/2+E)^{j}d = C_0(E)d$$
	for all $t\in[0,1]$.
	Therefore, the image of $\sigma$ is contained in $\overline{B_{C_0(E)d}}(y)$. This proves that $(Y,y,G)$ is $C_0(E)$-tunneled.
\end{proof}

\begin{rem}
	Regarding Proposition \ref{C_tunnel}, we mention that a stronger result holds: $(Y,y,G)$ is $3$-tunneled. Hence the size of the tunnel actually can be uniformly controlled regardless of the value of $E$, as long as $E\not=1/2$. Because Proposition \ref{C_tunnel} is sufficient for this paper, we omit the proof of this stronger result.
\end{rem}
%

\begin{lem}\label{tunnel_conv}
	Let $(Y_i,y_i,G_i)$ be a convergent sequence of spaces
	$$(Y_i,y_i,G_i)\overset{GH}\longrightarrow (Z,z,H),$$
	where the limit group $H$ is nilpotent. Suppose that there is $C_0$ such that $(Y_i,y_i,G_i)$ is $C_0$-tunneled for all $i$.
	Then for any tunnel $\sigma:[0,1]\to Hz$ from $z$, there is a sequence of tunnels $\sigma_i:[0,1]\to G_iy_i$ from $y_i$ such that $\sigma_i$ converges uniformly to $\sigma$.
\end{lem}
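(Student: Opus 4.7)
The plan is to produce $\sigma_i$ by discretizing $\sigma$ on a fine partition of $[0,1]$, lifting the finite set of endpoint values through the equivariant Gromov--Hausdorff approximation, and stitching the lifts together using the short tunnels supplied by the $C_0$-tunneled hypothesis. A diagonal argument over ever-finer partitions then yields a single sequence converging uniformly to $\sigma$.

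Fix $\epsilon>0$. Since $\sigma$ is continuous on a compact interval it is uniformly continuous, so pick a partition $0=t_0<t_1<\ldots<t_N=1$ with $d(\sigma(s),\sigma(t))\le\epsilon$ whenever $s,t\in[t_k,t_{k+1}]$. Write $\sigma(t_k)=h_kz$ with $h_k\in H$ and $h_0=e$. By Definition \ref{defn_FY}, for all sufficiently large $i$ we can select $g_{k,i}\in G_i$ with $g_{k,i}y_i\to h_kz$ as $i\to\infty$, and we take $g_{0,i}=e$. Setting $u_{k,i}:=g_{k,i}^{-1}g_{k+1,i}\in G_i$, we get
$$d(u_{k,i}y_i,y_i)=d(g_{k+1,i}y_i,g_{k,i}y_i)\longrightarrow d(h_{k+1}z,h_kz)\le\epsilon.$$

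Because $(Y_i,y_i,G_i)$ is $C_0$-tunneled, there is a tunnel from $y_i$ to $u_{k,i}y_i$ whose image lies in $\overline{B_{C_0d(u_{k,i}y_i,y_i)}(y_i)}$. Translating by the isometry $g_{k,i}$ and reparametrizing to the subinterval $[t_k,t_{k+1}]$ gives a continuous path $\tau_{k,i}\colon[t_k,t_{k+1}]\to G_iy_i$ from $g_{k,i}y_i$ to $g_{k+1,i}y_i$ whose image is contained in the closed ball of radius $C_0d(u_{k,i}y_i,y_i)$ around $g_{k,i}y_i$. Concatenating the $\tau_{k,i}$ over $k$ defines $\sigma_i^{(\epsilon)}\colon[0,1]\to G_iy_i$. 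For $t\in[t_k,t_{k+1}]$, the triangle inequality yields
$$d(\sigma_i^{(\epsilon)}(t),\sigma(t))\le C_0d(u_{k,i}y_i,y_i)+d(g_{k,i}y_i,h_kz)+d(\sigma(t_k),\sigma(t))\le(C_0+1)\epsilon+o(1),$$
where the $o(1)$ tends to $0$ as $i\to\infty$ with $\epsilon$ fixed. Choosing $\epsilon=\epsilon_j\to 0$ and indices $i_j$ large enough that the $o(1)$ term with $\epsilon_j$ is at most $\epsilon_j$ for all $i\ge i_j$, and then setting $\sigma_i:=\sigma_i^{(\epsilon_j)}$ for $i_j\le i<i_{j+1}$, produces the desired uniformly convergent sequence of tunnels.

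The construction is essentially routine, and the only real bookkeeping obstacle is to verify that all the lifted endpoints $g_{k,i}y_i$ and the concatenating paths remain inside the ball $B_{1/\epsilon}(y_i)$ on which the approximation maps of Definition \ref{defn_FY} are defined. Since $d(g_{k,i}y_i,y_i)$ is bounded above, uniformly in $i$, by roughly the diameter of the image of $\sigma$ around $z$ plus $C_0\epsilon$, this is automatic once $i$ is large. No further structure on $Y_i$ or $G_i$ beyond the uniform $C_0$-tunneled bound enters the argument, and no rectifiability or additional regularity of $\sigma$ is needed.
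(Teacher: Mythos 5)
Your proof is correct and follows essentially the same approach as the paper's: discretize $\sigma$ on a fine partition, lift the finitely many marked orbit points to approximating orbit points in $G_iy_i$, and stitch them with short tunnels supplied by the $C_0$-tunneled hypothesis, then control the uniform error by the triangle inequality. The only cosmetic difference is that you fix $\epsilon$, build $\sigma_i^{(\epsilon)}$, and diagonalize at the end, whereas the paper sets $\epsilon_i = d_{GH}((Y_i,y_i,G_i),(Z,z,H))$ and lets the partition depend on $i$; you also spell out explicitly (via translation by $g_{k,i}$) how a tunnel based at $y_i$ produces a tunnel between two nearby orbit points, a step the paper leaves implicit.
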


\begin{proof}
	Let $$\epsilon_i=d_{GH}((Y_i,y_i,G_i),(Z,z,H))\to 0.$$
	We shall construct $\sigma_i$ on $(Y_i,y_i,G_i)$ for each large $i$. By the uniform continuity of $\sigma$, we can choose a large integer $N$ such that 
	$$\mathrm{diam}(\sigma|_{[(j-1)/N,j/N]})\le\epsilon_i$$
	for all $j=1,2,..,N$. Let $z_j=\sigma(j/\mathcal{N})\in Hz$. For each $j$, we choose $y_{i,j}\in G_iy_i$ that is $\epsilon_i$-close to $z_j$; for $j=0$, we use $y_{i,0}=y$. By triangle inequality, it is clear that
	$$d(y_{i,j},y_{i,j+1})\le 3\epsilon$$
	for all $j$. Next, for two adjacent $y_{i,j}$ and $y_{i,j+1}$, we join them by a tunnel 
	$$\sigma_{i,j}:[(j-1)/N,j/N]\to G_iy_i$$ 
	such that it is contained in $\overline{B_{3C_0\epsilon}}(y_{i,j})$. Let $\sigma_i:[0,1]\to Gy$ be the concatenation of all $\sigma_{i,j}$, where $j=1,...,N$. For any $t\in [0,1]$, let $j\in\{1,...,N\}$ such that $t\in[(j-1)/N,j/N]$, then by construction we have
	\begin{align*}
		d(\sigma_i(t),\sigma(t))&\le d(\sigma_i(t),y_{i,j})+d(y_{i,j},z_j)+d(z_j,\sigma(t))\\
		&\le  3C_0\epsilon_i + \epsilon_i + \epsilon_i =(3C_0+2)\epsilon_i\to 0.
	\end{align*}
\end{proof}

%


Let $G$ be a Lie group. We use $G_0$ to denote the identity component subgroup of $G$. In a space $(Y,y,G)$, where $G$ is a Lie group, if $Gy$ is connected, then $Gy=G_0y$; consequently, every orbit point $z\in Gy$ can be represented as $gy$ for some $g\in G_0$.

\begin{lem}\label{tunnel_by_isotropy}
	Let 
	$$(Y_i,y_i,G_i)\overset{GH}\longrightarrow (Z,z,H)$$ 
	be a convergent sequence such that each $(Y_i,y_i,G_i)$ is $C_0$-tunneled and $H$ is nilpotent. Suppose that a sequence of orbit points $g_iy_i\in G_iy_i$ converges to a limit orbit point $gz\in Hz$, where $g_i\in (G_i)_0$ and $g\in H_0$. Then after passing to a subsequence if necessary, $g_i$ converges to $gh\in H_0$ with $h\in \mathrm{Iso}(z,H_0)$, the isotropy subgroup of $H_0$ at $z$.
\end{lem}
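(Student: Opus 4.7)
First I would extract a subsequential limit of $g_i$. Since $g_iy_i \to gz$, the displacements $d(y_i, g_iy_i)$ are uniformly bounded by some $R$, so $g_i \in G_i(R)$. By the precompactness for symmetric subsets under equivariant GH convergence (the proposition following Definition \ref{def_conv_symsubset}), along a subsequence $g_i \to g^* \in H(R)$, and the defining properties of the convergence force $g^*z = gz$. Setting $h := g^{-1}g^* \in H$, we have $hz = z$, i.e., $h \in \mathrm{Iso}(z,H)$, and $g^* = gh$. Because $g \in H_0$ by hypothesis, the remaining claim $h \in \mathrm{Iso}(z,H_0)$ is equivalent to $g^* \in H_0$.

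The plan for $g^* \in H_0$ is to exhibit a continuous path in $H$ from $e$ to $g^*$, obtained as the limit of bounded-displacement paths in $(G_i)_0$. By Lemma \ref{midpt_orb} the orbits $G_iy_i$ and $Hz$ are connected; since $H_0$ is normal in $H$, a short argument (distinct cosets $hH_0 z$ would be disjoint and connected, contradicting connectedness of $Hz$) gives $Hz = H_0 z$ and $G_iy_i = (G_i)_0 y_i$. The $C_0$-tunneled hypothesis yields tunnels $\tau_i : [0,1] \to (G_i)_0 y_i$ from $y_i$ to $g_iy_i$ contained in $\overline{B_{C_0 R}}(y_i)$. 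Since the orbit map $(G_i)_0 \to (G_i)_0 y_i$ is a principal fibration with fiber $\mathrm{Iso}(y_i,(G_i)_0)$, each $\tau_i$ lifts to a continuous path $\tilde\tau_i : [0,1] \to (G_i)_0$ with $\tilde\tau_i(0) = e$, image in $G_i(C_0 R)$, and endpoint $\tilde\tau_i(1) = g_i k_i$ for some $k_i \in \mathrm{Iso}(y_i,(G_i)_0)$.

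To take the limit, I would first extract (along a subsequence) a uniform limit $\tau_i \to \tau$ with $\tau : [0,1] \to Hz$ joining $z$ to $g^*z$; the equicontinuity needed for this Arzel\`a--Ascoli step is supplied by the explicit recursive-bisection construction in Proposition \ref{C_tunnel}, which provides a modulus of continuity independent of $i$. Path-lifting $\tau$ through the principal fibration $H_0 \to Hz$ starting at $e$ produces a continuous $\tilde\tau : [0,1] \to H_0$, and by continuity of path lifts this $\tilde\tau$ is the limit of the $\tilde\tau_i$. Consequently $\tilde\tau(1) = g^* k_\infty \in H_0$, where $k_\infty := \lim k_i \in \mathrm{Iso}(z,H)$. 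The target conclusion $g^* \in H_0$ then follows if the tunnels $\tau_i$ can be chosen so that the canonical lifts end at $g_i$ itself, i.e., $k_i = e$; this is the remaining substantive point.

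The main obstacle is precisely this last compatibility: the midpoint lemma (Lemma \ref{midpt_orb}) furnishes midpoint orbit points but not canonical midpoint group elements, so there is real freedom in lifting, and a priori the isotropy twist $k_i$ could be nontrivial and accumulate through the bisection. The crux is to perform the recursive bisection at the group level inside $(G_i)_0$, picking group-representative midpoints compatibly with the prescribed endpoint $g_i$, so that the partial lifts $\tilde\tau_i$ terminate at $g_i$ exactly and share a uniform modulus of continuity across $i$. Once this synchronized construction is carried out, the resulting continuous limit path $\tilde\tau$ links $e$ to $g^*$ in $H$, certifying $g^* \in H_0$ and completing the proof.
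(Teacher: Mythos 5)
Your overall strategy — lift a bounded tunnel from $y_i$ to $g_iy_i$ into $(G_i)_0$, pass to the limit, and read off that $g^*=g_\infty$ lies in $H_0$ — is in the same spirit as the paper's, but the paper closes the argument by a different and much weaker route, and your version has gaps precisely where it departs from the paper.

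The paper's proof does \emph{not} try to produce a continuous path in $H_0$ from $e$ to $g_\infty$. Instead it argues by contradiction: if $g_\infty\notin H_0$, pick a point $q$ for which $Hq$ is disconnected and $g_\infty q$ lies outside the component of $q$; transport the bounded path $\widetilde{\sigma_i}$ to the curves $\tau_i(t)=\widetilde{\sigma_i}(t)\cdot q_i$ in $G_iq_i$, note that the images $\mathrm{im}\,\tau_i$ are uniformly bounded, connected, compact sets, and use that Hausdorff limits of connected compacta are connected to conclude $q$ and $g_\infty q$ lie in one component of $Hq$. This only needs the \emph{image sets} to converge, not the paths as maps, and no uniqueness-of-lift statement is required.

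Your version asserts that the lifted paths $\tilde\tau_i:[0,1]\to(G_i)_0$ converge to the $H_0$-lift $\tilde\tau$ of the limit tunnel $\tau$, and here there is a genuine gap. The uniform modulus of continuity you invoke controls $d(\tau_i(t),\tau_i(t'))=d(\tilde\tau_i(t)y_i,\tilde\tau_i(t')y_i)$, i.e.\ displacement \emph{at $y_i$ only}. That does not bound the distance between $\tilde\tau_i(t)$ and $\tilde\tau_i(t')$ as isometries: two isometries that nearly agree at $y_i$ can differ by an isotropy element at $y_i$, and such elements have zero displacement at $y_i$ but can move other points a macroscopic amount. So Arzel\`a--Ascoli applied to $\tau_i$ does not give a convergent limit of $\tilde\tau_i$ in the topology relevant to $G_i\to H$, and ``by continuity of path lifts this $\tilde\tau$ is the limit of the $\tilde\tau_i$'' is not justified. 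Moreover, your conclusion $\tilde\tau(1)=g^*k_\infty\in H_0$ implies $g^*\in H_0$ only if $k_\infty\in H_0$, and knowing that limits of $(G_i)_0$-elements lie in $H_0$ is essentially what the lemma is trying to prove — the step is circular. Finally, you correctly flag the isotropy twist $k_i$ as a remaining issue, but the proposed fix (``perform the recursive bisection at the group level, ... sharing a uniform modulus of continuity across $i$'') runs into exactly the same modulus-of-continuity obstruction and is not carried out. (In the paper's actual application, Lemma \ref{conv_to_adapted_map}, the spaces are good in the sense of Definition \ref{def_good}, so $(G_i)_0$ is a simply connected nilpotent group acting with trivial isotropy at $y_i$; there $k_i=e$ automatically, which is why the paper can assert the existence of a lift from $\mathrm{id}$ to $g_i$ without further comment.) The cleaner path is the paper's: do not lift through $H_0\to H_0z$ at all, but instead move the tunnel to an auxiliary point $q$ where the component structure of $Hq$ can detect $g_\infty\notin H_0$, and only use the connectedness of limit sets.
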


\begin{proof}
	Let $d=d(z,gz)$. Passing to a subsequence, we have convergence
	$$(Y_i,y_i,G_i,g_i)\overset{GH}\longrightarrow (Z,z,H,g_\infty),$$
	where $g_\infty \in H$. 
	
	We claim that $g_\infty \in H_0$. We argue by contradiction. Suppose that $g_\infty\not\in H_0$, then by Proposition \ref{G0_by_orbit} there is a point $q\in Z$ such that the orbit $Hq$ has multiple components and $g_\infty q$ is not in the component containing $q$. On $(Y_i,y_i,G_i)$, let $\sigma_i:[0,1]\to G_iy_i$ be a tunnel from $y_i$ to $g_iy_i$ such that $\mathrm{im}\sigma_i \subseteq \overline{B_{2C_0d}}(y_i)$. 
    Let $\widetilde{\sigma_i}:[0,1]\to (G_i)_0$ be a continuous path from $\mathrm{id}$ to $g_i$ such that $\sigma_i(t)=\widetilde{\sigma_i}(t)\cdot y_i$. Let $q_i\in Y_i$ converging to $q$. We consider the continuous path $\tau_i(t):=\widetilde{\sigma_i}(t)\cdot q_i$ in $G_iq_i$ from $q_i$ to $g_iq_i$. Then
	$$d(\tau_i(t),y_i)\le d(\widetilde{\sigma_i}(t) q_i,\widetilde{\sigma_i}(t) y_i)+d(\widetilde{\sigma_i}(t) y_i,y_i)\le d(q_i,y_i)+2C_0d.$$
	We write $l=d(q,y)$. After passing to a subsequence, connected and closed subsets $\mathrm{im}\tau_i$ converges to a limit connected and closed subset $S\subseteq \overline{B_{l+2C_0d}(y)}\cap Hq$. Since $S$ contain both $q$ and $g_\infty q$, we conclude that $q$ and $g_\infty q$ belong to the same connected component of $Hq$. This proves the claim.
	 
	We have $g_i\to g_\infty$ and $g_iy_i\to gz$, thus $g_\infty z=gz$. Let $h=g^{-1}g_\infty$, then $h$ fixes $z$. Together with $g\in H_0$ and the claim $g_\infty\in H_0$, we conclude that $h\in \mathrm{Iso}(z,H_0)$.
\end{proof}

\subsection{Statements of distance gaps and a rough idea of the proof}\label{subsec_egh_idea}

As explained in Lemma \ref{escape_index}, without loss generality, we assume that $\pi_1(M,p)=\mathcal{N}$ is a finitely generated torsion-free nilpotent group. Then for any $(Y,y,G)\in\Omega(\widetilde{M},\mathcal{N})$, $G$ is a nilpotent Lie group.

\begin{defn}\label{def_type_kd}
	Let $(Y,y,G)$ be a space, where $G$ is a nilpotent Lie group. Let $T$ be the maximal torus of $G_0$. We say that $(Y,y,G)$ is of type $(k,d)$, if 
	$$\dim G - \dim T =k,\quad \mathrm{diam}(Ty)=d.$$
\end{defn}

Using Definition \ref{def_type_kd}, we state the equivariant Gromov-Hausdorff distance gaps.

\begin{prop}\label{eGH_gap_1}
	There is a constant $\delta_1=\delta_1(\widetilde{M},\mathcal{N})>0$ such that the following holds. 
	
	Let $(Y,y,G)$ and $(Y',y',G')\in\Omega(\widetilde{M},\mathcal{N})$ of type $(k,d)$ and $(k',d')$, respectively. Suppose that $k<k'$ and $d\le 1$, then
	$$d_{GH}((Y,y,G),(Y',y',G'))\ge \delta_1.$$ 
\end{prop}

\begin{prop}\label{eGH_gap_2}
	There is a constant $\delta_2=\delta_2(\widetilde{M},\mathcal{N})>0$ such that the following holds. 
	
	Let $(Y,y,G)$ and $(Y',y',G')\in\Omega(\widetilde{M},\mathcal{N})$ of type $(k,d)$ and $(k',d')$, respectively. Suppose that $k=k'$, $d\le 1$, and $d'\ge 10$, then
	$$d_{GH}((Y,y,G),(Y',y',G'))\ge \delta_2.$$ 
\end{prop}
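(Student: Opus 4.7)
The plan is a proof by contradiction, implementing the tunnel plus large-fiber-lemma strategy sketched in subsection \ref{subsec_egh_idea} in the regime where the two types share the same value of $k$ but differ in the metric size of their maximal torus orbits. Suppose the conclusion fails. Then I can find sequences $(Y_i,y_i,G_i)$ of type $(k,d_i)$ with $d_i\le 1$ and $(Y_i',y_i',G_i')$ of type $(k,d_i')$ with $d_i'\ge 10$ in $\Omega(\widetilde M,\mathcal N)$ whose equivariant Gromov--Hausdorff distances tend to zero. By Proposition \ref{cpt_cnt} and the Colding--Naber dimension bound on limit Lie groups, passing to subsequences I may assume both sequences converge to a common limit $(Z,z,H)\in\Omega(\widetilde M,\mathcal N)$, with $\dim G_i\equiv m$, $\dim G_i'\equiv m'$ constant, and maximal tori $T_i,T_i'$ of constant dimensions $m-k$ and $m'-k$. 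By Proposition \ref{C_tunnel}, all the spaces in sight are $C_0(E)$-tunneled.

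Next I pass the torus structure to the limit. The tunnel bound gives $T_iy_i\subseteq \overline{B_{1}(y_i)}$ and $T_i'y_i'\subseteq \overline{B_{10C_0}(y_i')}$; after a further subsequence, Lemma \ref{tunnel_conv} then produces compact connected limits $K,K'\subseteq Hz$ with $\mathrm{diam}(K)\le 1$ and $\mathrm{diam}(K')\ge 10$. Centrality of $T_i$ in $G_i$ (Lemma \ref{max_torus}) implies that every $G_i$-translate of $T_iy_i$ has diameter exactly $d_i\le 1$, so the orbit $G_iy_i$ admits a continuous non-torus projection $\pi_i\colon G_iy_i\to (G_i/T_i)y_i$ onto a $k$-dimensional base, with all fibers of diameter $\le 1$. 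Via the adapted bases and adapted maps from subsection \ref{subsec_egh_adapt}, these projections $\pi_i$ pass to the limit to give a continuous $\pi\colon Hz\to B$ onto a $k$-dimensional base $B$ whose fibers have diameter $\le 1+o(1)$, in a way that is simultaneously compatible with the other sequence: the equivariant GH approximation from $(Y_i',y_i',G_i')$ to $(Y_i,y_i,G_i)$ matches $\pi$ with $\pi_i$ up to $o(1)$.

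Finally, I derive the contradiction. Applying the large fiber lemma (Lemma \ref{large_fiber}) to the composition of the GH approximation with $\pi_i$, restricted to the torus orbit $T_i'y_i'$ (whose topological dimension is inherited from the torus $T_i'$), produces two points of $T_i'y_i'$ at mutual distance of order $\mathrm{diam}(T_i'y_i')\ge 10$ whose images in $Y_i$ lie in a common $\pi_i$-fiber of diameter $\le 1+o(1)$. Because the approximation is $o(1)$-close to an isometry on $\overline{B_{10C_0}(y_i')}$, this is incompatible with a distance $\ge 10-o(1)$, a contradiction once $i$ is large. The main obstacle, and the reason subsection \ref{subsec_egh_adapt} is needed, is the construction of $\pi$ and its compatibility with both sequences: the individual projections $\pi_i$ are intrinsic to each $(Y_i,y_i,G_i)$ and do not automatically descend to $(Z,z,H)$ in a way that matches the other sequence. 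The adapted-basis framework is designed precisely to select compatible coordinate decompositions of $G_i$ and $G_i'$ so that the torus/non-torus split survives equivariant GH convergence and agrees on both sides of the common limit.
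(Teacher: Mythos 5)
Your proposal has a genuine gap in the final step where you apply the large fiber lemma.

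You apply Lemma \ref{large_fiber} to (a cube inside) the torus orbit $T_i'y_i'$, mapping through the GH approximation and the quotient $\pi_i$ into $(G_i/T_i)\bar{y}_i \cong \mathbb{R}^k$, and you justify the dimensional hypothesis of the lemma by saying the topological dimension of $T_i'y_i'$ ``is inherited from the torus $T_i'$.'' But nothing in the hypotheses of the proposition controls $\dim T_i'$ relative to $k$: the type of $(Y_i',y_i',G_i')$ is $(k, d_i')$, which fixes $\dim G_i' - \dim T_i' = k$ and $\mathrm{diam}(T_i'y_i') = d_i' \ge 10$, but $T_i'$ itself could be one-dimensional (and the orbit $T_i'y_i'$ a single circle, after also accounting for possible isotropy). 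A circle of diameter $10$ admits no $(k{+}1)$-cube, so the large fiber lemma simply cannot be applied when $k \ge 1$, and the argument collapses. The metric size of $T_i'y_i'$ gives no lower bound on its topological dimension.

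The paper gets around exactly this difficulty by not looking at the torus orbit alone. It first applies Lemma \ref{dist_gap_lem} to split into two cases for the common limit $(Z,z,H)$: either $k_\infty > k$, or $k_\infty = k$ with $d_\infty \ge 10$. In the second case it builds an adapted basis $\{e_0, e_1, \dots, e_k\}$ on the quotient $(Z/T_\infty, \bar z, H/T_\infty)$ where $e_1, \dots, e_k$ come from the ``translation'' directions and a single extra element $e_0$ comes from the large limit torus (Definition \ref{def_adapted_ver2}); the resulting adapted map $F : [0,1]^{k+1} \to (H/T_\infty)\bar z$ is injective (Lemma \ref{adapted_map_inj_ver2}) because that one torus parameter is combined with the $k$ translation parameters, not because the torus itself is high-dimensional. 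The approximating maps $F_i : [0,1]^{k+1} \to (G_i/T_i)\bar y_i$ from Lemma \ref{conv_to_adapted_map} then always have domain $[0,1]^{k+1}$ by construction, so the large fiber lemma applies against the target $\cong \mathbb{R}^k$, and the contradiction comes from uniform convergence plus injectivity of $F$. Your proposal also omits the $k_\infty > k$ case and the preliminary Lemma \ref{dist_gap_lem}, but the decisive failure is the uncontrolled topological dimension of $T_i'y_i'$: you need a $(k{+}1)$-cube, and you do not have one.
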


Besides using tunnels, a key ingredient in the proof is the large fiber lemma from topological dimension theory.

\begin{lem}[Large Fiber Lemma]\label{large_fiber}
	Let $F:[0,1]^{k+1}\to \mathbb{R}^k$ be a continuous map. Then there are $a,b\in [0,1]^{k+1}$ such that $F(a)=F(b)$ and $|a-b|\ge 1$. 
\end{lem}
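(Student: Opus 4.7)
The plan is to argue by contradiction, combining the Poincaré--Miranda theorem (a corollary of the Brouwer fixed-point theorem) with a measure-theoretic dimension argument. Suppose every fiber $F^{-1}(q)$ has diameter strictly less than $1$. For each $i=1,\dots,k+1$, denote the opposite $i$-faces by $A_i^- = \{x\in [0,1]^{k+1}: x_i = 0\}$ and $A_i^+ = \{x: x_i = 1\}$. Any $a\in A_i^-$ and $b\in A_i^+$ satisfy $|a-b|\ge 1$, so by the standing assumption $F(a)\ne F(b)$; hence $F(A_i^-)$ and $F(A_i^+)$ are disjoint compact subsets of $\mathbb{R}^k$.

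Next I would use $F$ to build a face-preserving self-map of the cube. For each $i$, choose disjoint open neighborhoods of $F(A_i^-)$ and $F(A_i^+)$ in $\mathbb{R}^k$, and a smooth Urysohn function $\chi_i:\mathbb{R}^k\to [0,1]$ that is identically $0$ on an open neighborhood of $F(A_i^-)$ and identically $1$ on an open neighborhood of $F(A_i^+)$ (produced, for instance, by convolving the indicator of a suitable intermediate closed set with a small mollifier). Setting $\Psi(x):=(\chi_1(F(x)),\dots,\chi_{k+1}(F(x)))$ yields a continuous map $[0,1]^{k+1}\to [0,1]^{k+1}$ with $\Psi(A_i^-)\subset\{y_i=0\}$ and $\Psi(A_i^+)\subset\{y_i=1\}$. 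For any $p\in (0,1)^{k+1}$, the map $x\mapsto \Psi(x)-p$ has the coordinate-wise sign pattern (negative on $A_i^-$, positive on $A_i^+$) required by Poincaré--Miranda, so it has a zero; consequently $\Psi$ surjects onto the open cube $(0,1)^{k+1}$.

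For the contradiction, note that $\Psi = \Phi\circ F$ where $\Phi:=(\chi_1,\dots,\chi_{k+1}):\mathbb{R}^k\to \mathbb{R}^{k+1}$ is smooth, and therefore Lipschitz on the compact set $F([0,1]^{k+1})\subset \mathbb{R}^k$. Hence $\Psi([0,1]^{k+1})=\Phi(F([0,1]^{k+1}))$ is the Lipschitz image of a bounded subset of $\mathbb{R}^k$, which has Lebesgue measure zero in $\mathbb{R}^{k+1}$; this is incompatible with surjectivity onto the open cube $(0,1)^{k+1}$, which has positive measure. The main obstacle is precisely this dimension step: $\Psi$ factors through a $k$-dimensional target, but continuous maps from $\mathbb{R}^k$ to $\mathbb{R}^{k+1}$ can be space-filling in general (Peano-type), so bare continuity is not enough. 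The resolution is that by choosing the $\chi_i$ to be smooth and locally constant on open neighborhoods of $F(A_i^\pm)$, one simultaneously secures the exact face-preservation needed for Poincaré--Miranda and the regularity of $\Phi$ that forces its image to be $k$-rectifiable, hence of zero $(k+1)$-dimensional measure.
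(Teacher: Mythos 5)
Your argument is correct, but it takes a genuinely different route from the one the paper sketches. The paper cites two standard derivations: one via the Lebesgue covering lemma from topological dimension theory, and one via the Borsuk--Ulam theorem (restrict $F$ to the $k$-sphere of radius $1/2$ centered at the center of the cube; Borsuk--Ulam produces a pair of antipodal points with the same image, and antipodal points on that sphere are at Euclidean distance exactly $1$). You instead argue by contradiction through a face-separation and measure-theoretic argument: assuming all fibers have diameter $<1$, you build smooth Urysohn functions $\chi_i$ separating $F(A_i^-)$ from $F(A_i^+)$, apply Poincar\'e--Miranda to $\Psi=(\chi_i\circ F)_i$ to get surjectivity onto $(0,1)^{k+1}$, and then note that $\Psi$ factors as $\Phi\circ F$ with $\Phi$ Lipschitz on the compact set $F([0,1]^{k+1})\subset\mathbb{R}^k$, so its image has $\sigma$-finite $\mathcal{H}^k$ and hence vanishing $(k+1)$-dimensional Lebesgue measure, contradicting surjectivity. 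The Borsuk--Ulam route is considerably shorter and even gives the sharper conclusion $|a-b|=1$ for free, with no regularization needed; your route has the virtue of using only Brouwer-level topology plus elementary Hausdorff measure facts, and you correctly identify and defuse the one real danger in the factorization step: without the Lipschitz control on $\Phi$ (supplied by choosing the $\chi_i$ smooth), the factored map could in principle be Peano-type space-filling and the measure argument would collapse. Both proofs are valid; yours is more elaborate but self-contained modulo Poincar\'e--Miranda.
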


\begin{rem}
The large fiber lemma is a corollary of the Lebesgue covering lemma in topological dimension theory (see \cite[Section 6]{Guth}). It also follows from the Borsuk-Ulam theorem in algebraic topology (see \cite[Corollary 2B.7]{Hatcher}): we take a $k$-sphere of radius $1/2$ in $[0,1]^{k+1}$, then by Borsuk-Ulam theorem, there exists a pair of antipodal points on the sphere such that they have the same image under $F$.
\end{rem}

To illustrate how the large fiber lemma and the $C$-tunneled property can be applied to prove equivariant Gromov-Hausdorff distance gaps between two spaces, we rule out the following scenario.
Suppose that there is a sequence
$$(Y_i,y_i,G_i)\overset{GH}\longrightarrow (Z,z,H)$$
such that\\
(1) for each $i$, $(Y_i,y_i,G_i)$ is $C_0$-tunneled and $G_i$ is isomorphic $\mathbb{R}^k$;\\
(2) $H$ is isomorphic to $\mathbb{R}^{k+1}$.

Let $\{e_1,...,e_{k+1}\}\subseteq H$ be an $\mathbb{R}$-basis of $H=\mathbb{R}^{k+1}$. Note that $H$-action does not have isotropy subgroups at $z$ (otherwise, $H$ would have a nontrivial compact subgroup). For each $j=1,...,k+1$, we write $\{te_j\}_{t\in\mathbb{R}}$ as the one-parameter subgroup through $e_j$. We consider an embedding 
$$F:[0,1]^{k+1} \to Hz, \quad (t_1,...,t_{k+1})\mapsto \prod_{j=1}^{k+1} (t_je_j)\cdot z.$$
$\sigma_j(t):=(te_j)z$, where $t\in[0,1]$, gives a tunnel in $Hz$ from $z$ to $e_jz$. We apply Lemma \ref{tunnel_conv} to construct tunnels $\sigma_{i,j}:[0,1]\to G_iy_i$ from $y_i$ that converges uniformly to $\sigma_j$ as $i\to\infty$. Because $G_i$-action does not have isotropy subgroups at $y_i$, each $\sigma_{i,j}$ uniquely defines a continuous curve $\widetilde{\sigma_{i,j}}:[0,1]\to G_i$ such that $\widetilde{\sigma_{i,j}}(t)\cdot y_i=\sigma_{i,j}(t)$. This allows us to define a continuous map
$$F_i:[0,1]^{k+1} \to G_iy_i,\quad (t_1,...,t_{k+1})\mapsto \prod_{j=1}^{k+1} \widetilde{\sigma_{i,j}}(t_j) \cdot y_i.$$
By construction, it is not difficult to show that $F_i$ converges uniformly to $F$. Since $G_iy_i$ is homeomorphic to $\mathbb{R}^k$, we can apply the large fiber lemma to $F_i$. It follows that there are $a_i,b_i\in [0,1]^{k+1}$ such that $F_i(a_i)=F_i(b_i)$ and $|a_i-b_i|\ge 1$. Passing to a subsequence, we have limit points $a',b'\in [0,1]^{k+1}$ such that $F(a')=F(b')$ and $|a'-b'|\ge 1$ by the uniform convergence of $F_i$ to $F$. However, this contradicts the injectivity of $F$.

\begin{rem}
	We remark that in general, it is possible for a sequence of $\mathbb{R}^k$-actions converges to a limit $\mathbb{R}^{k+1}$-action, as shown in the example below. Hence the $C$-tunneled property is crucial here. 
	
	We consider a sequence of $\mathbb{R}$-actions on the standard Euclidean space $\mathbb{R}^3$ as follows. Set $p=(0,0,0)$ as our base point. For each $i\in\mathbb{Z}_+$, $G_i=\mathbb{R}$ acts on $\mathbb{R}^3$ by rotating $xy$-plane by angle $2\pi t$ with center $(i,0,0)$ while translating along $z$-axis by $t/i$, where $t\in\mathbb{R}$. Then one can directly check that
	$$(\mathbb{R}^3,p,G_i)\overset{GH}\longrightarrow (\mathbb{R}^3,p,\mathbb{R}^2).$$
	The limit $\mathbb{R}^2$-action are translations in $yz$-plane. Note that these spaces $(\mathbb{R}^3,p,G_i)$ are not $C$-tunneled for a uniform $C$.
\end{rem}

\subsection{Adapted bases and adapted maps}\label{subsec_egh_adapt}

In general, the group actions involved are nilpotent and may have nontrivial torus subgroups that move the base point. Hence we need more preparations to carry out the strategy in subsection \ref{subsec_egh_idea}.

For convenience, we define
\begin{align*}
	\Omega_Q(\widetilde{M},\mathcal{N})=\{(Y/H,\bar{y},G/H)|& (Y,y,G)\in\Omega(\widetilde{M},\mathcal{N}),\\
	& H \text{ is a closed normal subgroup of }G \}.
\end{align*}
Note that $\Omega_Q(\widetilde{M},\mathcal{N})$ includes $\Omega(\widetilde{M},\mathcal{N})$ since we can take $H=\{\mathrm{id}\}$.

\begin{defn}\label{def_good}
	We say that a space $(Y,y,G)$ is \textit{good}, if the followings hold:\\
	(1) $(Y,y)\in \Omega_Q(\widetilde{M},\mathcal{N})$;\\
	(2) $G\subseteq \mathrm{Isom}(Y)$ is closed nilpotent subgroup;\\
	(3) $(Y,y,G)$ is $C_0$-tunneled, where $C_0$ is the constant in Proposition \ref{C_tunnel};\\
	(4) The isotropy subgroup of $G$ at $y$ is finite;\\
	(5) $G_0$, the identity component subgroup of $G$, is simply connected.
\end{defn}

\begin{lem}\label{lem_quo_tori_good}
	Let $(Y,y,G)\in \Omega(\widetilde{M},\mathcal{N})$ and let $T$ be the maximal torus subgroup of $G_0$. Then the quotient space $(Y/T,\bar{y},G/T)$ is good in the sense of Definition \ref{def_good}.
\end{lem}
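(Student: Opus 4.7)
The plan is to verify each of the five clauses of Definition \ref{def_good} for $(Y/T, \bar y, G/T)$ in turn. The main obstacle will be clause (5), which requires structure theory of connected nilpotent Lie groups.

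First I would check that $T$ is closed (as a compact subgroup) and normal in $G$: since $G_0$ is normal in $G$ and has a unique maximal torus by Lemma \ref{max_torus}, conjugation by any element of $G$ preserves $T$. Hence $(Y/T, \bar y, G/T) \in \Omega_Q(\widetilde{M}, \mathcal{N})$ by definition, giving (1); and $G/T$, identified with its image in $\mathrm{Isom}(Y/T)$, is closed and inherits nilpotency from $G$, giving (2).

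For (3), I would use compactness of $T$: given $\bar g \bar y$ with $d(\bar y, \bar g\bar y) = d$, the infimum $\inf_{t \in T} d(y, gty) = d$ is attained at some $t_0 \in T$. Proposition \ref{C_tunnel} applied to $(Y,y,G) \in \Omega(\widetilde{M}, \mathcal{N})$ produces a tunnel from $y$ to $g t_0 y$ inside $\overline{B_{C_0 d}(y)}$, whose image under the $1$-Lipschitz projection $Y \to Y/T$ is a tunnel from $\bar y$ to $\bar g \bar y$ in $\overline{B_{C_0 d}(\bar y)}$. For (4), any compact connected subgroup of the connected nilpotent Lie group $G_0$ is itself a compact connected nilpotent Lie group, hence a torus, and by uniqueness of the maximal torus (Lemma \ref{max_torus}) it lies inside $T$. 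In particular $\mathrm{Iso}_y(G)_0 \subseteq T$, so the isotropy of $G/T$ at $\bar y$ equals $(\mathrm{Iso}_y(G)\cdot T)/T \cong \mathrm{Iso}_y(G)/(\mathrm{Iso}_y(G) \cap T)$, which is a quotient of the finite group $\mathrm{Iso}_y(G)/\mathrm{Iso}_y(G)_0$ and hence finite.

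For (5), I would write $G_0 = \widetilde{G_0}/\Lambda$ where $\widetilde{G_0}$ is the simply connected nilpotent cover (diffeomorphic to its Lie algebra via $\exp$, by Lemma \ref{nil_exp}) and $\Lambda$ is a discrete central subgroup. Since $\Lambda$ is central, $\log\Lambda$ lies in the center $\mathfrak{z}$ of $\mathrm{Lie}(G_0)$; let $V \subseteq \mathfrak{z}$ be its $\mathbb{R}$-span. Then $\exp(V)$ is a central vector subgroup of $\widetilde{G_0}$ containing $\Lambda$, and $\exp(V)/\Lambda$ is a torus in $G_0$ of dimension $\dim V$. A short check using maximality and uniqueness of $T$ identifies this torus with $T$, so the preimage $\pi^{-1}(T) = \exp(V)$ in $\widetilde{G_0}$ is connected and $(G/T)_0 = G_0/T = \widetilde{G_0}/\exp(V)$. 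Choosing a vector-space complement $W$ to $V$ in $\mathrm{Lie}(G_0)$, the Baker--Campbell--Hausdorff formula (trivial because $V$ is central) yields a diffeomorphism of manifolds $\widetilde{G_0} \cong \exp(V) \times \exp(W)$, so $G_0/T \cong \exp(W) \cong \mathbb{R}^{\dim G_0 - \dim T}$ is simply connected.
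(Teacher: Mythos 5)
Your proof is correct and takes essentially the same approach as the paper: properties (1)--(3) are handled the same way (normality of $T$ via Lemma \ref{max_torus}, lifting the quotient orbit point to a distance-realizing $g\in G_0$, and projecting the tunnel from Proposition \ref{C_tunnel}). The paper dismisses (4) and (5) in one line as consequences of $G_0/T$ having no nontrivial torus; your universal-cover/lattice argument for (5) and the compact-connected-subgroup argument for (4) simply supply the structural details behind that remark.
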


\begin{proof}
	We first remark that by Lemma \ref{max_torus} $T$ is normal in $G$, thus the quotient group $G/T$ is defined. It is clear that (1,2) in Definition \ref{def_good} are fulfilled. (4,5) are also straightforward since $(G/T)_0=G_0/T$ does not have any nontrivial torus subgroup.
	
	It remains to show (3). Let $\pi:Y\to Y/T$ be the quotient map. Since $\pi$ maps $Gy$ to $(G/T)\bar{y}$ and $Gy$ is connected by Lemma \ref{midpt_orb}, we see that $(G/T)\bar{y}$ is also connected. For any orbit point $\bar{z}\in (G/T)\bar{y}$, because $(G/T)\bar{y}$ is connected, we can write $\bar{z}=\bar{g}\bar{y}$, where $\bar{g}\in (G/T)_0=G_0/T$. We choose $g\in G_0$ such that $g$ projects to $\bar{g}$ and 
	$$d_Y(gy,y)=d_{Y/T}(\bar{g}\bar{y},\bar{y})=:d.$$
	By Lemma \ref{C_tunnel}, there is a tunnel $\sigma:[0,1]\to Gy$ from $y$ to $gy$ that is contained in $\overline{B_{C_0d}}(y)$. Then $\pi\circ\sigma$ is a desired tunnel from $\bar{y}$ to $\bar{g}\bar{y}$ in $(G/T)\bar{y}$.
\end{proof}

We first construct adapted bases and maps for good spaces in the sense of Definition \ref{def_good}. In this case, the maximal torus of $G_0$ is trivial.

\begin{defn}\label{def_initial}
	Let $(Y,y,G)$ be a space, where $G_0$ is a simply connected nilpotent Lie group. Let $$G_0=\zeta_0(G_0)\triangleright \zeta_1(G_0) \triangleright ...\triangleright \zeta_{l-1}(G_0)\triangleright \zeta_l(G_0)=\{\mathrm{id}\}$$
	be the lower central series of $G_0$, where $\zeta_{l-1}(G_0)\not=\{\mathrm{id}\}$.  We say an element $e_1\in G_0$ is \textit{initial}, if $e_1\in \zeta_{l-1}(G_0)$ and $d(e_1y,y)=1$.
\end{defn}

Note that every one-parameter subgroup of $\zeta_{l-1}(G_0)$ has an unbounded orbit at $y$. Thus the initial element defined above always exists.

This initial element $e_1$ is the first element of an adapted basis $\{e_1,...,e_k\}$ with respect to $(Y,y,G)$, where $k$ is the dimension of $G$. We choose the remaining elements by induction. Let $H_1$ be the unique one-parameter subgroup through $e_1$ (see Lemma \ref{nil_exp}). By construction, $H_1$ is normal in $G_0$. We choose $\bar{e}_2\in G_0/H_1$ as the initial element in $(Y/H_1,\bar{y},G_0/H_1)$. Let $e_2\in G_0$ such that $e_2$ projects to $\bar{e}_2\in G_0/H_1$ and
$$d_Y(e_2 y,y)=d_{Y/H_1}(\bar{e}_2\bar{y},\bar{y})=1.$$
Note that because $\bar{e}_2$ belongs to the last nontrivial subgroup of the lower central series of $G_0/H_1$, we have
$$ [v,v_2]\in \mathrm{span}_\mathbb{R} \{v_1\}$$
for all $v\in \mathrm{Lie}(G_0)$, where $v_j\in \mathrm{Lie}(G_0)$ such that $\exp(v_j)=e_j$. Inductively, we choose $\{e_1,...,e_k\}$ such that\\
(1) $[v,v_{j+1}]\in \mathrm{span}_\mathbb{R} \{v_1,...,v_{j}\}$ for all $j=1,2,...,k-1$, where $v_j\in \mathrm{Lie}(G_0)$ such that $\exp(v_j)=e_j$;\\
(2) $\bar{e}_{j+1}$ is an initial element of $(Y/H_j,\bar{y},G_0/H_j)$, where $H_j$ is the Lie subgroup with Lie algebra as $\mathrm{span}\{v_1,...,v_j\}$, and $e_{j+1}\in G_0$ such that $e_{j+1}$ projects to $\bar{e}_{j+1}$ and
$$d_Y(e_{j+1} y,y)=d_{Y/H_j}(\bar{e}_{j+1}\bar{y},\bar{y})=1.$$

\begin{defn}\label{def_adapted_basis_good}
	Let $(Y,y,G)$ be a good space in the sense of Definition \ref{def_good}. We call the above constructed $\{e_1,...,e_k\}\subseteq G_0$ an \textit{adapted basis} with respect to $(Y,y,G)$, where $k$ is the dimension of $G$. 
\end{defn}

As a convention, $\prod$ means a product multiplying on the left
$\prod_{j=1}^k g_j=g_k...g_2g_1.$

\begin{defn}\label{def_adapted_map_good}
	Let $(Y,y,G)$ be a good space in the sense of Definition \ref{def_good}. Let $\mathcal{E}=\{e_1,...,e_k\}\subseteq G_0$ be an adapted basis with respect to $(Y,y,G)$, where $k$ is the dimension of $G$. We define an \textit {adapted map} for $\mathcal{E}$:
	$$F:[0,1]^k \to Gy,\quad (t_1,...,t_k)\mapsto \prod_{j=1}^k (t_je_j) \cdot y,$$
	where $te_j$ denotes the elements on the unique one-parameter subgroup through $e_j$.
\end{defn}

\begin{lem}\label{adapted_map_inj_good}
	Let $(Y,y,G)$ be a good space in the sense of Definition \ref{def_good}. Let $F:[0,1]^k \to Gy$ be an adapted map for an adapted basis $\mathcal{E}$ as in Definition \ref{def_adapted_map_good}. Then $F$ is a continuous injection.
\end{lem}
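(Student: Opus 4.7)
The plan is to factor $F$ as $F = \mathrm{ev}_y \circ \Phi|_{[0,1]^k}$, where
$$\Phi : \mathbb{R}^k \to G_0, \qquad (t_1,\ldots,t_k) \mapsto \prod_{j=1}^k (t_j e_j),$$
and $\mathrm{ev}_y : G_0 \to G_0 y$ is the orbit map $g \mapsto gy$. Continuity of $F$ is immediate from the smoothness of multiplication and of the exponential map in $G_0$, together with continuity of the action on $Y$. For injectivity I will show each factor is injective.

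For $\Phi$, I would first verify that $\{v_1,\ldots,v_k\}$ is a \emph{strong Malcev basis}, meaning $\mathfrak{h}_j := \mathrm{span}_\mathbb{R}\{v_1,\ldots,v_j\}$ is an ideal of $\mathrm{Lie}(G_0)$ for every $j$. The case $j=1$ uses that $v_1 \in \zeta_{l-1}(G_0) \subseteq Z(G_0)$, so $[v,v_1]=0$ for all $v \in \mathrm{Lie}(G_0)$. The general case follows by induction from condition (1) in the construction of an adapted basis, namely $[v,v_{i+1}] \in \mathrm{span}\{v_1,\ldots,v_i\}$. Once each $\mathfrak{h}_j$ is an ideal, the standard theorem on exponential coordinates of the second kind for simply connected nilpotent Lie groups gives that the map $(t_1,\ldots,t_k) \mapsto \exp(t_k v_k)\cdots \exp(t_1 v_1)$ is a diffeomorphism from $\mathbb{R}^k$ onto $G_0$. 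Unfolding the convention $\prod_{j=1}^k g_j = g_k\cdots g_1$ used in Definition~\ref{def_adapted_map_good}, this map is exactly $\Phi$, so $\Phi$ is a diffeomorphism and in particular injective on $[0,1]^k$.

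For $\mathrm{ev}_y$, I would show that the isotropy subgroup $\mathrm{Iso}(y,G_0)$ is trivial. By Definition~\ref{def_good}(4), $\mathrm{Iso}(y,G)$ is finite, hence so is its subgroup $\mathrm{Iso}(y,G_0)$. By Lemma~\ref{nil_exp}, $\exp:\mathrm{Lie}(G_0)\to G_0$ is a diffeomorphism, so every nontrivial element $\exp(v)$ of $G_0$ generates an infinite cyclic subgroup; in particular, $G_0$ contains no nontrivial finite subgroup. Thus $\mathrm{Iso}(y,G_0)=\{e\}$ and $\mathrm{ev}_y$ is injective on $G_0$. Combining the injectivity of $\Phi|_{[0,1]^k}$ with that of $\mathrm{ev}_y$ yields the injectivity of $F$.

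I do not foresee a serious obstacle here. The main conceptual point is to recognize that the adapted basis is tailor-made to be a strong Malcev basis, at which stage the second-kind coordinate theorem does the heavy lifting; the only minor care needed is matching the reversed product convention of Definition~\ref{def_adapted_map_good} with the standard form of second-kind coordinates.
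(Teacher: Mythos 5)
Your proof is correct, and it takes a somewhat different route from the paper's. Both proofs begin by observing that the $G_0$-action is free at $y$ (you argue this by noting a simply connected nilpotent Lie group has no nontrivial finite subgroups, which is cleaner than the paper's passing reference to torus subgroups), and both reduce to showing that $\widetilde{F}(t_1,\ldots,t_k)=\prod_{j=1}^k(t_je_j)$ is injective on $[0,1]^k$. Where you diverge is in how that injectivity is established: you verify that condition (1) of Definition \ref{def_adapted_basis_good} makes $\{v_1,\ldots,v_k\}$ a strong Malcev basis (each $\mathrm{span}\{v_1,\ldots,v_j\}$ is an ideal), and then invoke the standard theorem that exponential coordinates of the second kind give a global diffeomorphism $\mathbb{R}^k\to G_0$; the paper instead carries out a self-contained induction on the nilpotency step, quotienting by $\zeta_{l-1}(G_0)$ and using the Baker--Campbell--Hausdorff formula to peel off the central coordinates. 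The two arguments are ultimately the same in spirit (the paper's induction is essentially a proof of the Malcev coordinate theorem in this special case), but yours is shorter and yields the stronger conclusion that $\Phi$ is a diffeomorphism onto $G_0$, while the paper's keeps the argument elementary and avoids citing the second-kind coordinate theorem. The one point worth being explicit about, which you flag, is that the reversed product order in $\prod_{j=1}^k g_j=g_k\cdots g_1$ still gives a diffeomorphism; this follows from the standard statement by composing with group inversion and sign flips.
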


\begin{proof}
	Note that $G_0$-action is free at $y$; otherwise $G_0$ would have a compact torus subgroup as the isotropy subgroup at $y$. Because $G_0$-action is continuous and free at $y$, it suffices to show that the map
	$$\widetilde{F}:[0,1]^k \to G_0,\quad (t_1,...,t_k)\mapsto \prod_{j=1}^k (t_je_j)$$
	is a continuous injection. It is clear that $\widetilde{F}$ is continuous. We prove its injectivity by induction on the nilpotency step of $G_0$. 
	
	When $G_0$ is abelian, it is clear that $\widetilde{F}$ is injective. Assuming that $\widetilde{F}$ is injective when $G_0$ has nilpotency step $\le l$, we consider the case that $G_0$ has nilpotency step $l+1$. For each $j$, let $v_j\in \mathrm{Lie}(G_0)$ such that $\exp(v_j)=e_j$. Suppose that
	$$\prod_{j=1}^k \exp(t_jv_j) = \prod_{j=1}^k \exp(s_jv_j).$$
	By the construction of the adapted basis, there is an integer $m\in [1,k)$ such that $\zeta_{l-1}(G_0)=\exp(V_m)$, where $V_m$ is the span of $\{v_1,...,v_m\}$. The quotient group 
	$G_0/\zeta_{l-1}(G_0)$ has nilpotency step $l$. Let $\bar{v_j}$, where $j>m$, be the projection of $v_j$ in the quotient Lie algebra $\mathrm{Lie}(G_0)/V_m=\mathrm{Lie}(G_0/\zeta_{l-1}(G_0))$. Then in $G_0/\zeta_{l-1}(G_0)$ we have
	$$\prod_{j=m+1}^k \exp(t_j\bar{v_j})=\prod_{j=m+1}^k \exp(s_j\bar{v_j}).$$
	By the induction assumption, we have $t_j=s_j$ for all $j>m$.
	By Lemma \ref{nil_exp}, there is an element $Z\in \mathrm{Lie}(G_0)$ such that
	$$\exp(Z)=\prod_{j=m+1}^k \exp(t_j {v_j}).$$
	Because $v_1,..,v_m$ are in the center of $\mathrm{Lie}(G_0)$, by the Baker–Campbell–Hausdorff formula, it follows that
	$$\exp\left(\sum_{j=1}^m t_jv_j+Z\right)=\exp\left(\sum_{j=1}^m s_jv_j+Z\right).$$
	By Lemma \ref{nil_exp} again, we see that
	$$\sum_{j=1}^m t_jv_j+Z=\sum_{j=1}^m s_jv_j+Z.$$
	We conclude that $t_j=s_j$ also holds for $j=1,...,m$. This completes the inductive step and thus $\widetilde{F}$ is injective.
\end{proof}

In general, we will also consider spaces that do not satisfy Definition \ref{def_good}. There are mainly two cases, corresponding to Propositions \ref{eGH_gap_1} and \ref{eGH_gap_2} respectively. We shall similarly construct adapted bases and adapted maps in each case.

For Proposition \ref{eGH_gap_1}, we consider a space $(Z,z,H)\in\Omega_Q(\widetilde{M},\mathcal{N})$ of type $(k,d)$. Let $T_H$ be the maximal torus subgroup of $H_0$. The quotient space $(Z/T_H,\bar{z},H/T_H)$ is a good space in the sense of Definition \ref{def_good} by Lemma \ref{lem_quo_tori_good}. Thus we can follow Definitions \ref{def_adapted_basis_good} and \ref{def_adapted_map_good} to construct an adapted basis $\{\bar{e}_1,...,\bar{e}_k\}\subseteq (H/T_H)_0=H_0/T_H$. 
For each $j=1,...,k$, we choose $e_j\in H_0$ such that $e_j$ projects to $\bar{e_j}\in H/T_H$ and
$$d_Y(e_jz,z)=d_{Y/T_H} (\bar{e_j}\bar{z},\bar{z})=1.$$

\begin{defn}\label{def_adapted_ver1}
	Let $(Z,z,H)\in\Omega_Q(\widetilde{M},\mathcal{N})$ be a space of type $(k,d)$. We call the above constructed $\mathcal{E}=\{e_1,...e_k\}\subseteq H_0$ an \textit{adapted basis} with respect to $(Z,z,H)$. To further construct an adapted map, for each $e_k$, we choose a one-parameter subgroup $\tau_j:\mathbb{R}\to H_0$ such that $\tau_j(1)=e_j$; note that the choice of $\tau_j$ may not be unique. We define an \textit{adapted map} for $\mathcal{E}$ as follows:
    $$F:[0,1]^k\to Hz,\quad (t_1,...,t_k)\mapsto \prod_{j=1}^k \tau_j(t_j)\cdot z.$$
\end{defn}

\begin{lem}\label{adapted_map_inj_ver1}
	Let $(Z,z,H)\in\Omega_Q(\widetilde{M},\mathcal{N})$ be a space of type $(k,d)$ and let $T_H$ be the maximal torus subgroup of $H$. Let $\mathcal{E}=\{e_1,...e_k\}\subseteq H_0$ be an adapted basis with respect to $(Z,z,H,T_H)$ and let $F:[0,1]^k\to Hz$ be an adapted map for $\mathcal{E}$. Then $F$ is a continuous injection.
\end{lem}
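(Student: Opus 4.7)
My plan is to reduce the statement to the good-space case already settled in Lemma \ref{adapted_map_inj_good} by factoring out the maximal torus $T_H$. Continuity of $F$ is automatic, since $F$ is the composition of the continuous one-parameter subgroups $\tau_j$, group multiplication in $H_0$, and the continuous orbit map $h\mapsto hz$; so the entire content is injectivity.

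For injectivity, let $\pi:Z\to Z/T_H$ denote the quotient projection, and use the same symbol for the induced Lie group homomorphism $H\to H/T_H$. By Lemma \ref{lem_quo_tori_good}, the quotient space $(Z/T_H,\bar z,H/T_H)$ is good in the sense of Definition \ref{def_good}; in particular $(H/T_H)_0 = H_0/T_H$ is a simply connected nilpotent Lie group. By construction, $\{\bar e_1,\dots,\bar e_k\}$ is an adapted basis for this good space, and the associated adapted map of Definition \ref{def_adapted_map_good} is
$$\bar F : [0,1]^k \to (H/T_H)\bar z,\qquad (t_1,\dots,t_k)\mapsto \prod_{j=1}^k (t_j\bar e_j)\cdot\bar z,$$
where $t\bar e_j$ is the value at time $t$ of the \emph{unique} one-parameter subgroup through $\bar e_j$, uniqueness coming from Lemma \ref{nil_exp} applied to $H_0/T_H$. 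Lemma \ref{adapted_map_inj_good} then gives that $\bar F$ is a continuous injection.

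The key identification is $\pi\circ F = \bar F$. Indeed, each $\pi\circ\tau_j:\mathbb R\to H_0/T_H$ is a one-parameter subgroup with $\pi\circ\tau_j(1)=\bar e_j$; by the uniqueness in Lemma \ref{nil_exp}, it must coincide with the one-parameter subgroup $t\mapsto t\bar e_j$ used in Definition \ref{def_adapted_map_good}. Consequently
$$\pi\circ F(t_1,\dots,t_k) \;=\; \prod_{j=1}^k \pi(\tau_j(t_j))\cdot\bar z \;=\; \prod_{j=1}^k (t_j\bar e_j)\cdot\bar z \;=\; \bar F(t_1,\dots,t_k).$$
Any equality $F(t)=F(s)$ projects to $\bar F(t)=\bar F(s)$ and then injectivity of $\bar F$ forces $t=s$, which finishes the proof.

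The only subtle point, rather than a genuine obstacle, is that the one-parameter subgroups $\tau_j\subseteq H_0$ chosen upstairs are \textbf{not} unique because $H_0$ may contain the nontrivial torus $T_H$; different choices of $\tau_j$ produce different maps $F$. What rescues the argument is that the torus ambiguity is precisely what the quotient $\pi$ eliminates, so that $\pi\circ\tau_j$ becomes canonical and the injective map $\bar F$ controls every possible $F$ simultaneously. No additional analysis of the $T_H$-fibers is required.
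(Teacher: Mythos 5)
Your proof is correct and follows essentially the same route as the paper: project via $\pi: Z \to Z/T_H$, observe that $\pi\circ F$ coincides with the adapted map for the good quotient space (because $\pi\circ\tau_j$ must be the unique one-parameter subgroup through $\bar e_j$ in the simply connected group $H_0/T_H$), and invoke Lemma~\ref{adapted_map_inj_good}. The remarks you add about continuity and the non-uniqueness of the lifts $\tau_j$ are sound elaborations of what the paper states tersely.
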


\begin{proof}
	The continuity of $F$ is clear. We prove its injectivity. Recall that $\mathcal{E}=\{e_1,...,e_k\}$ is the lift of an adapted basis $\overline{\mathcal{E}}=\{\bar{e_1},...,\bar{e_k}\}$ with respect to the quotient space $(Z/T_H,\bar{z},H/T_H)$. Note that a one-parameter subgroup $\tau_j$ through $e_j$ projects to the unique one-parameter subgroup through $\bar{e_j}$. Let $\pi:Z\to Z/T_H$ be the quotient map.  By construction,
	$$(\pi\circ F)(t_1,...,t_k)=\prod_{j=1}^k \pi\circ\tau_j(t_j)\cdot \bar{z}=\prod_{j=1}^k t_j \bar{e}_j \cdot\bar{z}.$$
	Thus $\pi\circ F$ is the adapted map for $\overline{\mathcal{E}}$. By Lemma \ref{adapted_map_inj_good}, $\pi\circ F$ is injective, thus $F$ is injective as well.
\end{proof}

Next, we consider the scenario for Proposition \ref{eGH_gap_2}. Let $(Z,z,H)\in\Omega(\widetilde{M},\mathcal{N})$ be a space of type $(k,d)$ with $d\ge 5$.
Let $T_H$ be the maximal torus subgroup of $H_0$ and let $\{e_1,...,e_k\}$ be an adapted basis with respect to $(Z,z,H)$ as constructed in Definition \ref{def_adapted_ver1}. For the sake of a dimensional argument, we need an additional element from $T_H$. We choose an element $e_0\in T_H$ such that\\
(1) $d({e_0}{z},{z})=1$,\\
(2) there is a piece of one-parameter subgroup $\tau_0$ from $\mathrm{id}$ to $e_0$ such that $\tau_0|_{(0,1]}$ is outside $\mathrm{Iso}(z,H_0)$.

\begin{defn}\label{def_adapted_ver2}
	Let $(Z,z,H)\in\Omega_Q(\widetilde{M},\mathcal{N})$ be a space of type $(k,d)$, where $d\ge 5$, and let $T_H$ be the normal torus subgroup of $H$. We call the above constructed $\{e_0,e_1...,e_k\}\subseteq H_0$ an \textit{adapted basis} with respect to $(Y,y,H,T_H)$. Next, we construct an adapted map. For $e_0$, we have already chosen a piece of one-parameter subgroup $\tau_0$ from $\mathrm{id}$ to $e_0$. For each $e_j$, where $j=1,...,k$, let $\tau_j$ be a piece of a one-parameter subgroup from $\mathrm{id}$ to $e_j$. We define an \textit{adapted map} for $\mathcal{E}$:
    $$F:[0,1]^{k+1}\to Hz,\quad (t_0,t_1,...,t_k)\mapsto \prod_{j=0}^k \tau_j(t_j)\cdot z.$$
\end{defn}

\begin{lem}\label{adapted_map_inj_ver2}
	Let $(Z,z,H)\in\Omega_Q(\widetilde{M},\mathcal{N})$ be a space of type $(k,d)$, where $d\ge 5$. Let $\mathcal{E}=\{e_0,e_1,...e_k\}\subseteq H_0$ be an adapted basis with respect to $(Z,z,H,T_H)$ and let $F:[0,1]^{k+1}\to Hz$ be an adapted map for $\mathcal{E}$. Then $F$ is a continuous injection.
\end{lem}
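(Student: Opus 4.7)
The plan is to reduce the claim to Lemma \ref{adapted_map_inj_ver1} via the quotient by the maximal torus $T_H$, and then to separate the extra $t_0$-coordinate using the centrality of $T_H$ together with the non-stabilizing property built into the choice of $\tau_0$. Continuity of $F$ is immediate from the continuity of each one-parameter subgroup $\tau_j$ and of the $H$-action, so I would concentrate entirely on injectivity.

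First I would exploit the algebraic structure. Because $e_0 \in T_H$, the one-parameter subgroup $\tau_0$ lies in $T_H$. By Lemma \ref{max_torus}, $T_H$ is central in $H_0$, so $\tau_0(t_0)$ commutes with every $\tau_j(t_j)$ for $j\ge 1$. This lets me rewrite
\[
F(t_0, t_1, \ldots, t_k) \;=\; \tau_0(t_0)\cdot F_1(t_1,\ldots,t_k),
\]
where $F_1(t_1,\ldots,t_k) := \prod_{j=1}^k \tau_j(t_j)\cdot z$ is precisely the adapted map of Definition \ref{def_adapted_ver1}.

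Next, assume $F(t_0, t_1, \ldots, t_k) = F(s_0, s_1, \ldots, s_k)$ and project through the quotient map $\pi\colon Z \to Z/T_H$. Since $\tau_0(t_0), \tau_0(s_0) \in T_H$ are collapsed by $\pi$, the equation reduces to $\pi(F_1(t_1,\ldots,t_k)) = \pi(F_1(s_1,\ldots,s_k))$. As in the proof of Lemma \ref{adapted_map_inj_ver1}, $\pi \circ F_1$ is the adapted map of the good quotient space $(Z/T_H,\bar z, H/T_H)$ (this uses Lemma \ref{nil_exp} to identify the projection of $\tau_j$ with the unique one-parameter subgroup through $\bar e_j$). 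Lemma \ref{adapted_map_inj_good} then yields $t_j = s_j$ for all $j \ge 1$.

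It remains to recover $t_0 = s_0$. Setting $z' := F_1(t_1,\ldots,t_k) \in Hz$ and writing $z' = h\cdot z$ for some $h \in H_0$, the hypothesis collapses to $\tau_0(t_0)\cdot z' = \tau_0(s_0)\cdot z'$; using centrality of $\tau_0$ one more time, this is equivalent to $\tau_0(s_0 - t_0) \in \mathrm{Iso}(z, H_0)$. Since $|s_0 - t_0| \le 1$ and $\mathrm{Iso}(z,H_0)$ is a subgroup, this would force $\tau_0(u) \in \mathrm{Iso}(z,H_0)$ for some $u\in (0,1]$ whenever $s_0\ne t_0$, contradicting condition (2) in the construction of $\tau_0$. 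Hence $s_0 = t_0$, completing injectivity. The only step that requires any care, and the one I would verify in detail when writing the argument out, is the identification of $\pi \circ F_1$ with the quotient adapted map; everything else is a direct transport of Lemma \ref{adapted_map_inj_ver1} plus the deliberately arranged property of $\tau_0$.
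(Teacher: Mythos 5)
Your proof is correct and follows essentially the same route as the paper: reduce to Lemma \ref{adapted_map_inj_ver1} by projecting to $Z/T_H$ to obtain $t_j = s_j$ for $j \geq 1$, then isolate $\tau_0$ and invoke the defining property that $\tau_0|_{(0,1]}$ avoids $\mathrm{Iso}(z,H_0)$. The one cosmetic difference is that you move $\tau_0(t_0)$ past the other factors via centrality of $T_H$ and then conjugate back by $h$, whereas the paper simply cancels the isometry $\prod_{j=1}^k \tau_j(t_j)$ on the left of $\big(\prod_{j=1}^k \tau_j(t_j)\big)\tau_0(t_0)z = \big(\prod_{j=1}^k \tau_j(t_j)\big)\tau_0(t'_0)z$, which reaches $\tau_0(t_0)z = \tau_0(t'_0)z$ without needing centrality at this step.
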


\begin{proof}
	It is clear that $F$ is continuous. Suppose that 
	$$F(t_0,t_1,...,t_k)=F(t'_0,t'_1,...,t'_k).$$
	Let $\pi: Z\to Z/T_H$ be the quotient map. By the proof of Lemma \ref{adapted_map_inj_ver1}, $\pi\circ F$ is an adapted map and thus is injective. This shows that $t'_j=t_j$ for $j=1,...,k$. Now we have
	$$\left(\prod_{j=1}^k \tau_j(t_j)\right)\tau_0(t_0)z=\left(\prod_{j=1}^k  \tau_j(t_j)\right)\tau_0(t'_0)z.$$
	Thus $\tau_0(t_0)z=\tau_0(t'_0)z$. Because $\tau_0$ is constructed from a one-parameter subgroup, we have $\tau_0(t_0-t'_0)z=z$. Recall that $\tau|_{(0,1]}$ is outside $\mathrm{Iso}(z,H_0)$. Hence we must have $t_0=t'_0$. 
\end{proof}

To complete this subsection, we use the properties of tunnels (Lemmas \ref{tunnel_conv} and \ref{tunnel_by_isotropy}) to construct maps converging uniformly to an adapted map.

\begin{lem}\label{conv_to_adapted_map}
	Let 
	$$(Y_i,y_i,G_i)\overset{GH}\longrightarrow(Z,z,H)$$
	be a convergent sequence of spaces in $\Omega_Q(\widetilde{M},\mathcal{N})$. Suppose that\\
	(1) each $(Y_i,y_i,G_i)$ is good in the sense of Definition \ref{def_good};\\
	(2) on the limit space $(Z,z,H)$, there is an adapted map $F$ defined in either Definition \ref{def_adapted_ver1} or \ref{def_adapted_ver2} with domain $[0,1]^k$ or $[0,1]^{k+1}$, respectively.\\
	Then there is a sequence of continuous maps 
	$$F_i:[0,1]^k \text{ or } [0,1]^{k+1} \to G_iy_i\subseteq Y_i$$
	that converges uniformly to $F$. 
\end{lem}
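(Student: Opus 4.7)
The plan is to construct $F_i$ by approximating each one-parameter piece $\tau_j$ of $F$ with a lifted tunnel in $(G_i)_0$ and then forming the corresponding product.

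First, for each $j$, the path $\sigma_j(t):=\tau_j(t)\cdot z$ is a tunnel in $Hz$, and because each $(Y_i,y_i,G_i)$ is $C_0$-tunneled by goodness condition (3), Lemma~\ref{tunnel_conv} produces tunnels $\sigma_{i,j}\colon[0,1]\to G_iy_i$ starting at $y_i$ that converge uniformly to $\sigma_j$. Next, I would lift each $\sigma_{i,j}$ to a continuous path $\tilde{\sigma}_{i,j}\colon[0,1]\to (G_i)_0$ with $\tilde{\sigma}_{i,j}(0)=\mathrm{id}$ and $\tilde{\sigma}_{i,j}(t)\cdot y_i=\sigma_{i,j}(t)$. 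Such a lift exists because goodness conditions (4)--(5), combined with Lemma~\ref{nil_exp} (a simply connected nilpotent Lie group has no nontrivial compact subgroup), force the $(G_i)_0$-action at $y_i$ to be free; the orbit map $(G_i)_0\to G_iy_i$ is then a continuous bijection, and using the Lie group structure of $(G_i)_0$ together with properness of isometric actions on the Ricci limit $Y_i$, it is in fact a homeomorphism onto its image. I then define
\[
F_i(t_1,\dots,t_k):=\prod_{j=1}^{k}\tilde{\sigma}_{i,j}(t_j)\cdot y_i,
\]
with the analogous formula including $\tilde{\sigma}_{i,0}(t_0)$ in the Definition~\ref{def_adapted_ver2} case. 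Each $F_i$ is then continuous as a composition of continuous maps.

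To prove $F_i\to F$ uniformly, I would induct on the number of factors. The base case is the uniform convergence of the tunnel approximations from the previous step. For the inductive step, Lemma~\ref{tunnel_by_isotropy} (applied along convergent subsequences) shows that $\tilde{\sigma}_{i,j}(t)$ converges in $H_0$ to a path of the form $\tau_j(t)\cdot h_j(t)$ with $h_j(t)\in\mathrm{Iso}(z,H_0)$, and the inductive step reduces to verifying that these isotropy perturbations do not alter the composed orbit point. I would handle this by exploiting the adapted basis construction: the $\tau_j$ (for $j\ge 1$) are chosen as lifts from the good quotient $(Z/T_H,\bar{z},H/T_H)$, whose projected action has trivial isotropy at $\bar{z}$ (since $(H/T_H)_0$ is simply connected nilpotent). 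Thus, the projection $\pi\colon Z\to Z/T_H$ yields an unambiguous inductive convergence $\pi\circ F_i\to\pi\circ F$ downstairs, and the remaining ambiguity upstairs lies in the fiber $T_H\cap\mathrm{Iso}(z,H_0)$ of $\pi$, whose interaction with the product can be tracked using the commuting-square structure of the adapted basis construction.

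The main obstacle is controlling the isotropy contributions $h_j(t)$ in the inductive convergence step. The resolution relies essentially on the construction of the adapted basis as lifts from a good quotient, where the isotropy issue disappears; tracking the torus-valued residual ambiguity when pulling back to $Z$ is the most delicate part of the argument.
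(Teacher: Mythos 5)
Your construction of $F_i$ matches the paper's exactly: tunnel convergence (Lemma \ref{tunnel_conv}) to produce $\sigma_{i,j}$, uniqueness of the lift $\tilde{\sigma}_{i,j}$ to $(G_i)_0$ by freeness at $y_i$ (goodness (4)-(5)), and the product formula. The divergence, and the gap, is in the convergence argument.

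You correctly apply Lemma \ref{tunnel_by_isotropy} to get $\tilde{\sigma}_{i,j}(t_j)\to\tau_j(t_j)h_j$ with $h_j\in\mathrm{Iso}(z,H_0)$, and you correctly identify that the crux is showing these isotropy factors do not disturb the product. But you then propose a detour through $Z/T_H$ and an induction on factors, and you acknowledge you have not closed the ``torus-valued residual ambiguity when pulling back to $Z$.'' That acknowledgment is the gap: knowing $\pi\circ F_i\to\pi\circ F$ downstairs only tells you $F_i(\vv{t}_i)$ subconverges into the $T_H$-orbit of $F(\vv{t})$; it does not, by itself, pin down the limit as $F(\vv{t})$, and your sketch offers no mechanism to do so.

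The missing ingredient, which renders both the induction and the quotient detour unnecessary, is \emph{centrality}: $\mathrm{Iso}(z,H_0)$ is a compact subgroup of the nilpotent Lie group $H_0$, hence lies in its maximal torus, which is central by Lemma \ref{max_torus}. Once each $h_j\in Z(H_0)$, the limit of the product is computed in one line:
$$\prod_{j\in J}\bigl(\tau_j(t_j)h_j\bigr)\cdot z=\prod_{j\in J}\tau_j(t_j)\cdot\prod_{j\in J}h_j\cdot z=\prod_{j\in J}\tau_j(t_j)\cdot z=F(\vv{t}),$$
since the $h_j$ commute past the $\tau_j$'s and jointly fix $z$. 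The paper checks uniform convergence directly by applying this to every convergent parameter sequence $\vv{t}_i\to\vv{t}$; there is no induction. You should replace your projection/induction scheme with this centrality observation, which in particular also handles the extra factor $\tau_0$ from Definition \ref{def_adapted_ver2} without any case split.
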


\begin{proof}
	Before starting the proof, we remark that the limit space $(Z,z,H)$ may not be good in the sense of Definition \ref{def_good}. 
	
	Let $J=\{1,...,k\}$ or $\{0,1,...,k\}$. We use $\vv{t}$ to denote 
	$$\vv{t}=(t_1,...,t_k)\in [0,1]^k\ \text{ or }\ \vv{t}=(t_0,t_1,...,t_k)\in [0,1]^{k+1}.$$
	In both Definitions \ref{def_adapted_ver1} and \ref{def_adapted_ver2}, the adapted map has the form
	$$F:[0,1]^k \text{ or } [0,1]^{k+1} \to Hz,\quad  \vv{t} \mapsto \prod_{j\in J} \tau_j(t_j)\cdot z,$$
	where $\tau_j:[0,1]\to H_0$ is a piece of one-parameter subgroup from $\mathrm{id}$ to $e_j$, an element in the adapted basis $\mathcal{E}$. For each $j\in J$, let $\sigma_j(t)=\tau_j(t)\cdot z$, which is a tunnel from $z$ to $e_jz$. By Lemma \ref{tunnel_conv}, there is a sequence of tunnels $$\sigma_{i,j}:[0,1]\to G_iy_i$$
	from $y_i$ that converges uniformly to $\sigma_j$ as $i\to\infty$. Because each $(Y_i,y_i,G_i)$ is good, $(G_i)_0$ acts freely at $y_i$. Hence $\sigma_{i,j}$ uniquely determines a continuous path
	$$\widetilde{\sigma_{i,j}}:[0,1]\to (G_i)_0$$
	such that $\widetilde{\sigma_{i,j}}(t) \cdot y_i=\sigma_{i,j}(t)$. We construct $F_i$ as
	$$F_i:[0,1]^k \text{ or } [0,1]^{k+1} \to G_iy_i,\quad \vv{t} \mapsto \prod_{j\in J} \widetilde{\sigma_{i,j}}(t_j) \cdot y_i.$$
	
	We prove that $F_i$ converges uniformly to $F$. It suffices to show that for every convergent sequence
	$$(\vv{t})_i=(t_{i,j})_{j\in J}\to \vv{t}=(t_j)_{j\in J},$$
	it holds that $F_i((\vv{t})_i)\to F(\vv{t})$ as $i\to\infty$, that is,
	$$\prod_{j\in J} \widetilde{\sigma_{i,j}}(t_{i,j})\cdot y_i \to \prod_{j\in J} \tau_j(t_j)\cdot z$$
	given $t_{i,j}\to t_j$ for each $j\in J$. By construction of $\widetilde{\sigma_{i,j}}$, we have $\widetilde{\sigma_{i,j}}(t_{i,j}) y_i$ converging to $\tau_j(t_j)z$. After passing to a subsequence, we assume that for each $j\in J$, $\widetilde{\sigma_{i,j}}(t_{i,j})$ converges to some element in $H$ as $i\to\infty$. By Lemma \ref{tunnel_by_isotropy}, we have
	$$\widetilde{\sigma_{i,j}}(t_{i,j})\overset{GH}\to \tau_j(t_j)h_j, $$
	where $h_j\in \mathrm{Iso}(z,H_0)$. The compact subgroup $\mathrm{Iso}(z,H_0)$ must be contained in the maximal torus subgroup of $H_0$, thus each $h_j$ is central in $H_0$ by Lemma \ref{max_torus}. It follows that
	$$\prod_{j\in J} \widetilde{\sigma_{i,j}}(t_{i,j})\cdot y_i \to \prod_{j\in J} (\tau_j(t_j)h_j) \cdot z =\prod_{j\in J} \tau_j(t_j)  \cdot \prod_{j\in J} h_j  \cdot z= \prod_{j\in J} \tau_j(t_j)\cdot z.$$
	This verifies the uniform convergence of $F_i$ to $F$.
\end{proof}

\subsection{Proof of the distance gaps}\label{subsec_egh_proof}

We prove Propositions \ref{eGH_gap_1} and \ref{eGH_gap_2} in this subsection.

\begin{lem}\label{mod_max_torus}
	Let $(Y_i,y_i,G_i)$ be a sequence of spaces in $\Omega(\widetilde{M},\mathcal{N})$ and let $T_i$ denote the maximal torus subgroup of $G_i$. Suppose that there is $D>0$ such that $\mathrm{diam}(T_iy_i)\le D$ for all $i$ and
	$$(Y_i,y_i,G_i,T_i)\overset{GH}\longrightarrow (Z,z,H,K).$$ 
	Then
	$$(Y_i/T_i,\bar{y}_i,G_i/T_i)\overset{GH}\longrightarrow (Z/K,\bar{z},H/K).$$
\end{lem}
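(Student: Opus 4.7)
The plan is to deduce the equivariant quotient convergence from the given upstairs convergence $(Y_i,y_i,G_i,T_i)\to(Z,z,H,K)$ by constructing explicit approximation maps downstairs, using the uniform diameter bound $\mathrm{diam}(T_iy_i)\le D$ to control errors.

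First, I would establish normality so that the quotient groups make sense. Since $T_i$ is the unique maximal torus of the connected nilpotent Lie group $(G_i)_0$ by Lemma \ref{max_torus}, it is characteristic in $(G_i)_0$, and hence normal in $G_i$. For the limit, pick any $h\in H$ and $k\in K$; choose $g_i\in G_i$ with $g_i\to h$ and $t_i\in T_i$ with $t_i\to k$, where the latter convergence is in the sense of Definition \ref{def_conv_symsubset}. Since $T_i\trianglelefteq G_i$, the conjugates $g_it_ig_i^{-1}$ lie in $T_i$ and converge to $hkh^{-1}$; combined with $T_i\to K$ as closed symmetric subsets, this forces $hkh^{-1}\in K$, so $K\trianglelefteq H$. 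Thus $G_i/T_i$ and $H/K$ act by isometries on the quotient length spaces $Y_i/T_i$ and $Z/K$.

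Next, the underlying convergence $(Y_i/T_i,\bar y_i)\to(Z/K,\bar z)$ in pointed Gromov-Hausdorff topology follows immediately from Theorem \ref{eqGH_FY}(2) applied to $(Y_i,y_i,T_i)\to(Z,z,K)$. To upgrade this to equivariant convergence, fix $R,\epsilon>0$ and let $(f_i,\psi_i,\phi_i)$ be $\epsilon$-approximations realizing both $(Y_i,y_i,G_i)\to(Z,z,H)$ and the symmetric-subset convergence $T_i\to K$, so that $\psi_i$ sends $T_i(R+D)$ into a small neighborhood of $K(R+D)$ and similarly for $\phi_i$. For $\bar x\in B_R(\bar y_i)\subset Y_i/T_i$ I would pick a lift $x\in B_{R+D}(y_i)$ using $\mathrm{diam}(T_iy_i)\le D$, and set $\bar f_i(\bar x):=\overline{f_i(x)}\in Z/K$; any two such lifts differ by some $t\in T_i(2R+D)$, whose image $\psi_i(t)$ is $O(\epsilon)$-close to $K$ by condition (4) of Definition \ref{defn_FY}, so $\bar f_i$ is well-defined up to $O(\epsilon)$-error in $Z/K$. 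The maps $\bar\psi_i:(G_i/T_i)(R)\to (H/K)(R)$ and $\bar\phi_i:(H/K)(R)\to (G_i/T_i)(R)$ are defined analogously by the lift-and-project procedure, with normality of $T_i$ and $K$ ensuring coset-independence up to $O(\epsilon)$.

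Finally, I would verify conditions (1)--(5) of Definition \ref{defn_FY} for the triple $(\bar f_i,\bar\psi_i,\bar\phi_i)$ with error $C(D,R)\epsilon$, concluding $d_{GH}((Y_i/T_i,\bar y_i,G_i/T_i),(Z/K,\bar z,H/K))\to 0$. Conditions (1) and (3) descend directly because the projections $Y_i\to Y_i/T_i$ and $Z\to Z/K$ are $1$-Lipschitz, and condition (2) uses that $T_iy_i$ and $Kz$ have uniformly bounded diameter so that a lift in the target ball may always be found. Conditions (4) and (5) reduce to the corresponding upstairs conditions, with the freedom to replace a lift $g$ by $tg$ (for $t\in T_i$) used to keep orbit displacements bounded in the quotients. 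The main obstacle, and the step where the diameter hypothesis is essential, is the bookkeeping in the previous paragraph: one must verify that all the $O(\epsilon)$-errors remain uniformly controlled by a constant depending only on $D$ and $R$, which is precisely what $\mathrm{diam}(T_iy_i)\le D$ provides and which ensures that the limiting action on $Z/K$ is that of the full group $H/K$ rather than some proper closed subgroup.
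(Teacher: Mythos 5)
Your proposal is correct and follows essentially the same approach as the paper's proof: construct approximation maps for the quotient tuples by lift-and-project, with the diameter bound $\mathrm{diam}(T_iy_i)\le D$ controlling the lifts and errors. The paper streamlines the bookkeeping by arranging the upstairs approximation maps to satisfy $\psi_i(T_i)\subseteq K$ and $\phi_i(K)\subseteq T_i$ exactly (rather than up to $O(\epsilon)$ as you track it), and it takes the normality of $K$ in $H$ for granted, which you correctly pause to verify.
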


\begin{proof}
	The proof is standard by approximation maps. We give some details below for readers' convenience.
	
	Let $$\epsilon_i=10\cdot d_{GH}((Y_i,y_i,G_i),(Z,z,H))\to 0.$$
	It is clear that $\mathrm{diam}(Kz)\le D$. When $1/\epsilon_i \gg D$, we have a tuple of $\epsilon_i$-approximation maps $(f_i,\psi_i,\phi_i)$, that is,
	$$f_i:B_{1/\epsilon_i}(y_i)\to Z,\quad  \psi_i: G_i(1/\epsilon_i) \to H(1/\epsilon_i), \quad \phi_i:H(1/\epsilon_i) \to  G_i(1/\epsilon_i) $$
	with the properties (1)-(5) in Definition \ref{defn_FY} and\\
	(6) $\psi_i(T_i)\subseteq K$, $\phi_i(K)\subseteq T_i$. \\
	By Theorem \ref{eqGH_FY}(2), we have
	$$(Y_i/T_i,\bar{y_i})\overset{GH}\longrightarrow (Z/K,\bar{z}).$$
	Moreover, the approximation map $\bar{f}_i$ from $B_{1/(5\epsilon_i)}(\bar{y}_i)\subseteq Y_i/T_i$ to $Z/K$ can be chosen as an quotient of $f_i$; more precisely, for each $\bar{x}\in B_{1/(5\epsilon_i)}(\bar{y}_i)$, we define
	$$\bar{f}_i(\bar{x}):=\overline{f_i(x)}\in Z/K,$$
	where $x\in B_{1/(5\epsilon_i)}(y)$ is a point projecting to $\bar{x}\in Y_i/T_i$.
	
	Let $\bar{g}\in \frac{G_i}{T_i}(\frac{1}{5\epsilon_i})$, then there are $t_1,t_2\in T_i$ and $g\in G_i$ projecting to $\bar{g}$ such that
	$$d(t_1g y_i,t_2 y_i)=d(Tgy,Ty)=d(\bar{g}\bar{y}_i,\bar{y}_i)\le \frac{1}{5\epsilon_i}.$$
	Thus 
	$$d(gy_i,y_i)\le d(gy_i,t_1 g y_i)+d(t_1 g y_i,t_2y_i)+d(t_2 y_i,y_i) \le D+\frac{1}{5\epsilon_i}+D.$$
	We define 
	$$ \bar{\psi}_i: \dfrac{G_i}{T_i}\left(\frac{1}{5\epsilon_i}\right) \to \dfrac{H}{K},\quad \bar{g} \mapsto \overline{\psi_i(g)}.$$
	We estimate
	\begin{align*}
		d(\overline{\psi_i(g)}\bar{z},\bar{z})&=d(K\psi_i(g)z,Kz)\\
		&= d(k_1\psi_i(g)z,k_2z) \ \ \text{for some } k_1,k_2\in K \\
		&\le d(\psi_i(g)z,z)+d(k_1z,z)+d(k_2z,z)\\
		&\le d(\psi_i(g)z,gy_i)+d(gy_i,y_i)+d(y_i,z)+2D\\
		&\le \epsilon_i+ \left[2D + 1/(5\epsilon_i)\right] +\epsilon_i + 2D\\
		&\le 1/(10\epsilon_i). 
	\end{align*}
	Thus $\mathrm{im}(\bar{\psi}_i)\subseteq \frac{H}{K}(\frac{1}{10\epsilon_i})$. For any $g\in \frac{G_i}{T_i}(\frac{1}{5\epsilon_i})$ and $\bar{x},\bar{g}\bar{x}\in B_{1/\epsilon_i}(\bar{y})\subset Y/T$, 
	\begin{align*}
		d(\bar{f}_i(\bar{g}\bar{x}),\bar{\psi}_i\bar{f}_i(\bar{x}))& = d(\overline{f_i(gx)},\overline{\psi_i(g)}\cdot \overline{f_i(x)}) \\
		& =d(K\cdot f_i(gx),K\cdot \psi_i(g)\cdot f_i(x))\\
		& \le d(f_i(gx),\psi_i(g)\cdot f_i(x))\\
		& \le \epsilon_i.
	\end{align*}
	Similarly, we can construct 
	$$\bar{\phi}_i: \frac{H}{K}\left(\dfrac{1}{5\epsilon_i}\right) \to \dfrac{G_i}{T_i},\quad \bar{h} \mapsto \overline{\phi_i(h)}$$
	with the desired estimates. Therefore, $(\bar{f}_i,\bar{\psi}_i,\bar{\phi}_i)$ gives $(10\epsilon_i)$-approximation maps between $(Y_i/T_i,\bar{y}_i,G_i/T_i)$ and $(Z/K,\bar{z},H/K)$. This completes the proof.
\end{proof}

\begin{lem}\label{dist_gap_lem}
	Let 
	$$(Y_i,y_i,G_i)\overset{GH}\longrightarrow (Z,z,H)$$ 
	be a convergent sequence of spaces in $\Omega(\widetilde{M},\mathcal{N})$. Let $(k_i,d_i)$ be the type of $(Y_i,y_i,G_i)$ and let $(k_\infty,d_\infty)$ be the type of $(Z,z,H)$. \\
	(1) Suppose that $k_i\ge k$ for all $i$, then $k_\infty\ge k$.\\
	(2) Suppose that $k_i=k$ and $d_i\ge 10$ for all $i$, then either $k_\infty>k$ holds, or $k_\infty=k$ and $d_\infty\ge 10$ hold.
\end{lem}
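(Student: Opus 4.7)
The plan is to prove both parts by contradiction, combining the tunnel framework (Lemmas \ref{tunnel_conv} and \ref{tunnel_by_isotropy}), the adapted map constructions (Definitions \ref{def_adapted_ver1} and \ref{def_adapted_ver2}), the lift Lemma \ref{conv_to_adapted_map}, the quotient convergence Lemma \ref{mod_max_torus}, and the large fiber lemma (Lemma \ref{large_fiber}). The common setup is as follows: after passing to a subsequence, the maximal tori $T_i\subseteq(G_i)_0$ converge as symmetric subsets (by the precompactness result following Definition \ref{def_conv_symsubset}) to a closed connected abelian Lie subgroup $K\subseteq H$, and $d_i\to d^*\in[0,\infty]$. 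When $d^*<\infty$, $K$ is a compact torus in $H_0$ and is thus contained in $T_H$ by Lemma \ref{max_torus}; Lemma \ref{mod_max_torus} then yields the quotient convergence $(Y_i/T_i,\bar y_i,G_i/T_i)\to(Z/K,\bar z,H/K)$, in which the source spaces are good of type $(k,0)$ by Lemma \ref{lem_quo_tori_good}.

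For part (1), I would (after a subsequence) assume $k_i\equiv k$ and suppose for contradiction that $k_\infty<k$. In the bounded subcase, the key step is to build on the limit an injective continuous map $F:[0,1]^{k+1}\to (H/K)\bar z$: I would start from the adapted basis of Definition \ref{def_adapted_ver2} for $(Z/K,\bar z,H/K)$, which supplies $k_\infty+1\le k$ coordinates, and then extend it with additional commuting one-parameter torus coordinates drawn from $T_H/K$, each traversed on a sufficiently small parameter interval so that the resulting product map remains injective (Lemma \ref{adapted_map_inj_ver2} provides the template). Lemma \ref{conv_to_adapted_map} then produces maps $F_i:[0,1]^{k+1}\to(G_i/T_i)\bar y_i$ converging uniformly to $F$. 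Since each $(G_i/T_i,\bar y_i,G_i/T_i)$ is good with finite isotropy at $\bar y_i$, the orbit $(G_i/T_i)\bar y_i$ is a topological $k$-manifold, so the large fiber lemma applied to $F_i$ produces pairs $a_i,b_i\in[0,1]^{k+1}$ with $F_i(a_i)=F_i(b_i)$ and $|a_i-b_i|\ge 1$; passing to the limit yields $F(a)=F(b)$ with $|a-b|\ge 1$, contradicting the injectivity of $F$. The unbounded subcase $d^*=\infty$ forces $K$ to contain a non-compact $\mathbb{R}$-direction inside $H$, and a parallel adapted-map/large-fiber argument using this extra direction yields $k_\infty\ge k$ directly.

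For part (2), by (1) we already have $k_\infty\ge k$; I would assume $k_\infty=k$ and show $d_\infty\ge 10$. If the $d_i$ remain bounded, continuity of diameter under Hausdorff convergence of the orbits $T_iy_i\to Kz$ gives $\mathrm{diam}(Kz)=\lim d_i\ge 10$, and since $K\subseteq T_H$ we conclude $d_\infty=\mathrm{diam}(T_Hz)\ge \mathrm{diam}(Kz)\ge 10$. If $d_i\to\infty$, then $K$ is non-compact, so its identity component contains an $\mathbb{R}$-direction that projects nontrivially into the non-torus quotient $H_0/T_H$; combined with the $k$ non-torus directions inherited from $G_i/T_i$ via the adapted-map argument of part (1), this forces $k_\infty>k$, contradicting the assumption. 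The main technical obstacle is the extension of the adapted basis in part (1) by additional torus coordinates while preserving injectivity of the product map: this requires choosing one-parameter subgroups of $T_H/K$ that are linearly independent from the existing adapted directions modulo $\mathrm{Iso}(\bar z,(H/K)_0)\subseteq T_H/K$, and restricting each torus coordinate to a window small enough that wrapping cannot occur within the torus orbit.
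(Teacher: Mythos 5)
Your approach to part (1) is genuinely different from the paper's, and it has a gap that would be hard to patch. You want to argue by contradiction from $k_\infty<k$ by constructing an injective map $F:[0,1]^{k+1}\to(H/K)\bar z$ and then invoking the large fiber lemma on the approximating maps $F_i$. But under the contradiction hypothesis $k_\infty<k$, the target orbit $(H/K)\bar z$ is precisely the object you expect to be ``too small,'' so there is no reason an injective $F$ on a $(k+1)$-cube should exist. Concretely: Definition~\ref{def_adapted_ver2} requires the quotient diameter to be at least $5$, so it may not even be applicable; if only Definition~\ref{def_adapted_ver1} applies, you get $k_\infty<k$ coordinates, and the proposed extension by ``additional one-parameter torus coordinates from $T_H/K$'' fails whenever $T_H/K$ fixes $\bar z$ (which is exactly the case $d_\infty=0$, $K=T_H$, i.e.\ the cleanest instance of the scenario to be ruled out). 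Moreover Lemma~\ref{conv_to_adapted_map}, as proved, only handles adapted maps of the two specific forms in Definitions~\ref{def_adapted_ver1}--\ref{def_adapted_ver2}; your ad hoc product with extra torus windows would require a new version of that lemma and a new injectivity lemma. In short, the large fiber lemma machinery is built to show that a limit orbit cannot have \emph{more} independent directions than the approximating orbits (this is exactly how Propositions~\ref{eGH_gap_1} and~\ref{eGH_gap_2} use it); it does not, by itself, prevent a limit orbit from having \emph{fewer}, which is what part (1) asserts.

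The paper's proof of (1) avoids this entirely. It takes an adapted basis $\{e_{i,1},\dots,e_{i,k_i}\}$ on the \emph{source} spaces, forms the nested subgroups $L_{i,j}=\langle T_i,\mathbb{R}e_{i,1},\dots,\mathbb{R}e_{i,j}\rangle$, passes the whole chain to the limit $(Z,z,H,T_\infty,L_{\infty,j})$, and proves by induction on $j$ that $L_{\infty,j}$ has at least $j$ non-torus directions at $z$. The induction step uses only the elementary observation that for every $\delta>0$ there is $g_i(\delta)\in L_{i,j+1}\smallsetminus L_{i,j}$ with $d(g_i(\delta)y_i,L_{i,j}y_i)=\delta$, and this property survives the limit, producing a closed $\mathbb{R}$-subgroup of $L_{\infty,j+1}$ outside $L_{\infty,j}$. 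No large fiber lemma, no injective map on the limit, no case analysis on $d^*$. Your part (2) is essentially the paper's argument: assuming $k_\infty=k$, the $k$ $\mathbb{R}$-directions from (1) force the limit torus to be compact, hence $d_i$ bounded and $d_\infty\ge\lim d_i\ge 10$ (the paper writes $d_\infty=\lim d_i$; your version with $K\subseteq T_H$ and $d_\infty\ge\mathrm{diam}(Kz)$ is a cleaner phrasing of the same point). I would replace the part~(1) argument wholesale with the inductive chain-of-subgroups argument; it is both correct and substantially shorter than what you propose.
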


\begin{proof}
	(1) For each $i$, let $\{e_{i,1},...,e_{i,k_i}\}$ be an adapted basis for $(Y_i,y_i,G_i)$ as defined in Definition \ref{def_adapted_ver1}, where $k_i\ge k$. Let $T_i$ be the maximal torus subgroup of $G_i$ and let $L_{i,j}$ be the subgroup 
	$$L_{i,j}=\langle T_i,\mathbb{R}e_{i,1},...,\mathbb{R}e_{i,j} \rangle,$$
	where $j=1,...,k$. We remark that although the one-parameter subgroup $\mathbb{R}e_{i,j}$ is not unique, the above defined subgroup $L_{i,j}$ is uniquely defined and independent of the choice of the one-parameter subgroup through $e_j$. We consider the convergence
	$$(Y_i,y_i,G_i,T_i,L_{i,j})\overset{GH}\longrightarrow (Z,z,H,T_\infty,L_{\infty,j}).$$
	Let $(l_j,c_j)$ be the type of $(Z,z,L_{\infty,j})$. We prove $l_j\ge j$ by induction on $j$, then (1) follows by setting $j=k$.
	
	Let $j=1$. Note that the quotient group $L_{i,1}/T_i$ is a closed $\mathbb{R}$-subgroup of $\mathrm{Isom}(Y_i/T_i)$. Thus for each $\delta>0$, there is an element $g_{i}(\delta) \in L_{i,1}-T_i$ such that
	$$\delta= d(g_i(\delta)\cdot T_iy_i,T_i y_i)=d(g_i(\delta)\cdot y_i,T_iy_i).$$
	Passing this property to the limit, then for any $\delta>0$, we have an element $g_\infty(\delta)\in L_{\infty,1}-T_\infty$ such that
	$$\delta= d(g_\infty(\delta)\cdot z,T_\infty z).$$
	In particular, there is a closed $\mathbb{R}$-subgroup of $L_{\infty,1}$ that is outside $T_\infty$. Thus $l_1\ge 1$.
	
	Suppose that the statement holds for $j$. We consider $j+1$ next. The argument is similar to the case $j=1$. Since $L_{i,j+1}/L_{i,j}$ is a closed $\mathbb{R}$-subgroup of $\mathrm{Isom}(Y_i/L_{i,j})$, for each $\delta>0$ there is an element  $g_{i}(\delta) \in L_{i,j+1}-L_{i,j}$ such that
	$$\delta= d(g_i(\delta)\cdot y_i,L_{i,j} y_i).$$
	We pass this property to the limit. Hence there is 
	$g_\infty(\delta)\in L_{\infty,j+1}-L_{\infty,j}$ such that
	$$\delta= d(g_\infty(\delta)\cdot z,L_{\infty,j} z).$$
	This shows that there is a closed $\mathbb{R}$-subgroup of $L_{\infty,j+1}$ that is outside $L_{\infty,j}$. Together with the inductive assumption, we conclude that $l_{j+1}\ge j+1$. This completes the induction.
	
	(2) Assuming that $k_i=k$, $d_i\ge 10$ for all $i$, and $k_\infty=k$, we shall prove that $d_\infty\ge 10$. We follow the same notations as in the proof of (1); in particular, we have
	$$(Y_i,y_i,G_i,T_i,L_{i,j})\overset{GH}\longrightarrow (Z,z,H,T_\infty,L_{\infty,j}).$$
	We set $L_{\infty,0}:=T_\infty$ for convenience. From the proof of (1), we know that for each $j=1,...,k$, there is a closed $\mathbb{R}$-subgroup in $ L_{\infty,j}-L_{\infty,j-1}$. This implies that $T_\infty$ must be compact; otherwise, $T_\infty$ would contain a closed $\mathbb{R}$-subgroup and thus $k_\infty \ge k+1$, which violates with $k_\infty=k$. It follows that there is $D>0$ such that $d_i\le D$ for all $i$. Thus $$d_\infty = \lim_{i\to\infty} d_i \ge 10.$$
\end{proof}

We are ready to prove the distance gaps.

\begin{proof}[Proof of Propositions \ref{eGH_gap_1} and \ref{eGH_gap_2}] 
	The proofs of these two statements are similar. We argue by contradiction to prove them. 
	
	For Proposition \ref{eGH_gap_1}, suppose that there are two sequences $\{(Y_i,y_i,G_i)\}_i$ and  $\{(Y'_i,y'_i,G'_i)\}_i$ of spaces in $\Omega(\widetilde{M},\mathcal{N})$ with the conditions below:\\
	(1) each $(Y_i,y_i,G_i)$ has type $(k,d_i)$ with $d_i\le 1$;  \\
	(2) each $(Y'_i,y'_i,G'_i)$ has type $(k',d'_i)$ with $k'>k$;\\
	(3) $d_{GH}((Y_i,y_i,G_i),(Y'_i,y'_i,G'_i))\to 0$ as $i\to\infty$.\\
	After passing to some subsequences, we can assume that two sequences converge to the same limit $(Z,z,H)\in \Omega(\widetilde{M},\mathcal{N})$. We write $(k_\infty,d_\infty)$ as the type of $(Z,z,H)$. Condition (2) above and Lemma \ref{dist_gap_lem}(1) imply $k_\infty\ge k'>k$.
	Let $T_i$ be the maximal torus subgroup of $G_i$ and let $T_\infty\subseteq H$ be its limit. Since $\mathrm{diam}(T_iy_i)\le 1$, $T_\infty$ is compact. Thus the quotient space $(Z/T_\infty,\bar{z},H/T_\infty)$ has type $(k_\infty,\overline{d_\infty})$ with $\overline{d_\infty}\le d_\infty$. By Lemma \ref{mod_max_torus}, we have convergence
	$$(Y_i/T_i,\bar{y}_i,G_i/T_i)\overset{GH}\longrightarrow (Z/T_\infty,\bar{z},H/T_\infty).$$
	Let $F:[0,1]^{k_\infty} \to (H/T_\infty)\bar{z}$ be an adapted map constructed in Definition \ref{def_adapted_ver1}. $F$ is continuous and injective by Lemma \ref{adapted_map_inj_ver1}. According to Lemma \ref{conv_to_adapted_map}, there is a sequence of continuous maps $F_i:[0,1]^{k_\infty}\to (G_i/T_i)\bar{y}_i$ converges uniformly to $F$. Note that each $(G_i/T_i)\bar{y}_i$ is homeomorphic to $\mathbb{R}^k$ with $k<k_\infty$. By large fiber lemma, there are two sequences $\{a_i\}$ and $\{b_i\}$ in $[0,1]^{k_\infty}$ such that
	$$F_i(a_i)=F_i(b_i),\quad |a_i-b_i|\ge 1.$$
	By the uniform convergence of $F_i$ to $F$, we can find $a,b\in [0,1]^{k_\infty}$ such that
	$$F(a)=F(b),\quad |a-b|\ge 1.$$
	This contradicts the injectivity of $F$.
	
	For Proposition \ref{eGH_gap_2}, suppose that we have contradicting convergent sequences $\{(Y_i,y_i,G_i)\}_i$ and  $\{(Y'_i,y'_i,G'_i)\}_i$ such that\\
	(1) each $(Y_i,y_i,G_i)$ has type $(k,d_i)$ with $d_i\le 1$;  \\
	(2) each $(Y'_i,y'_i,G'_i)$ has type $(k,d'_i)$ with $d'_i\ge 10$;\\
	(3) $d_{GH}((Y_i,y_i,G_i),(Y'_i,y'_i,G'_i))\to 0$ as $i\to\infty$.\\
	After passing to some subsequences, we let $(Z,z,H)$ be their common limit, whose type is denoted as $(k_\infty,d_\infty)$. By Condition (2) above and Lemma \ref{dist_gap_lem}(2), either\\
	\textit{Case I.} $k_\infty>k$, or\\
	\textit{Case II.} $k_\infty=k$ and $d_\infty\ge 10$.\\
	Let $T_i$ be the maximal torus subgroup of $G_i$ and let $T_\infty\subseteq H$ be its limit. By Lemma \ref{mod_max_torus}, we have convergence
	$$(Y_i/T_i,\bar{y}_i,G_i/T_i)\overset{GH}\longrightarrow (Z/T_\infty,\bar{z},H/T_\infty).$$
	
	In Case I, we construct an adapted map $F:[0,1]^{k_\infty} \to (H/T_\infty)\bar{z}$ as in Definition \ref{def_adapted_ver1}. Then we use Lemma \ref{conv_to_adapted_map} to obtain $F_i:[0,1]^{k_\infty}\to (G_i/T_i)\bar{y}_i$ that converges uniformly to $F$, where $(G_i/T_i)\bar{y}_i$ is homeomorphic to $\mathbb{R}^k$ with $k<k_\infty$. Then a contradiction follows from the large fiber lemma and injectivity of $F$.
	
	In Case II, because $\mathrm{diam}(T_iy_i)\le 1$ for all $i$, the limit space $(Z/T_\infty,\bar{z},H/T_\infty)$ has type $(k,\bar{d})$ with $\bar{d}\ge 9$. Then we construct an adapted map $F:[0,1]^{k+1} \to (H/T_\infty)\bar{z}$ as in Definition \ref{def_adapted_ver2}. A similar contradiction arises since the targets of the approximated maps $F_i:[0,1]^{k+1}\to (G_i/T_i)\bar{y}_i$ are homeomorphic to $\mathbb{R}^k$.
\end{proof}

\section{Asymptotic orbits}\label{sec_cyclic}

This section studies the geometry of spaces in $\Omega(\widetilde{M},\mathcal{N})$ and $\Omega(\widetilde{M},\langle \gamma \rangle)$, where $\gamma \in \zeta_{l-1}(\mathcal{N})$. One of the main goals of this section is Proposition C(1), which states that $Hy$ is homeomorphic to $\mathbb{R}$ for all $(Y,y,H)\in \Omega(\widetilde{M},\langle \gamma \rangle)$. 

Some understandings of $(Y,y,G)\in \Omega(\widetilde{M},\mathcal{N})$ are required beforehand to prove Proposition C(1). In subsection \ref{subsec_whole_orbit}, we show that every $(Y,y,G)\in\Omega (\widetilde{M},\mathcal{N})$ is of type $(k_0,0)$ for some uniform $k_0$ (Proposition \ref{topol_dim}); in particular, we prove Theorem A(1). As mentioned in the introduction, the proof of Proposition \ref{topol_dim} uses the distance gaps in Section \ref{sec_egh_gap} and a critical rescaling argument. Subsection \ref{subsec_one_para_orbit} studies the one-parameter orbits of $Gy$ and makes the preparation for Proposition C(1). Lastly, we prove Proposition C(1) in subsection \ref{subsec_pf_c1}. The proof of Proposition C(1) follow a similar strategy as \cite[Section 3]{Pan_cone}: suppose that the statement fails, then we would find a suitable rescaling such that its limit space violates Proposition \ref{topol_dim}.

\subsection{Uniform type of asymptotic orbits and proof of Theorem A(1)}\label{subsec_whole_orbit}

\begin{prop}\label{topol_dim}
	Let $(M,p)$ be an open $n$-manifold with $\mathrm{Ric}\ge 0$ and $E(M,p)\not=1/2$. Let $\mathcal{N}\leq \pi_1(M)$ be a torsion-free nilpotent subgroup of finite index. Then there is an integer $k_0$ such that every $(Y,y,G)\in\Omega (\widetilde{M},\mathcal{N})$ is of type $(k_0,0)$. Consequently,\\
	(1) the orbit $Gy$ has a natural simply connected nilpotent group structure of dimension $k_0$;\\
	(2) any compact subgroup of $G$ fixes $y$;\\
	(3) $G$ has at most finitely many components.
\end{prop}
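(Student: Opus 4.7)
The plan is to prove the uniform type $(k_0, 0)$ across $\Omega(\widetilde{M},\mathcal{N})$ using the equivariant Gromov-Hausdorff distance gaps of Section \ref{sec_egh_gap}, compactness and connectedness of $\Omega$ (Proposition \ref{cpt_cnt}), and a critical rescaling argument as developed in \cite{Pan_eu}. The three consequences then follow from the Lie-theoretic structure of nilpotent Lie groups.

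\textbf{Setting up $k_0$.} The dimension of the limit Lie group $G$ is uniformly bounded across $\Omega$ (by a constant depending only on $n$), so $k = \dim G - \dim T$ takes only finitely many integer values. Lemma \ref{dist_gap_lem}(1) shows that $k$ is lower semicontinuous on $\Omega$, so the set on which $k$ attains its minimum value $k_0$ is closed. A basic observation, used repeatedly, is that if $(Y,y,G) \in \Omega$ has type $(k,d)$, then multiplying the blow-down scales by $\lambda > 0$ produces the rescaled cone $(Y/\lambda, y, G) \in \Omega$, of type $(k, d/\lambda)$, with the same underlying Lie group. The map $\lambda \mapsto (Y/\lambda, y, G)$ is continuous in the equivariant GH topology, and subsequential limits as $\lambda \to \infty$ or $\lambda \to 0^+$ exist in $\Omega$ by compactness.

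\textbf{Critical rescaling.} Suppose for contradiction that some $(Y, y, G) \in \Omega$ does not have type $(k_0, 0)$. By rescaling as above, I would construct a continuous family of cones in $\Omega$ whose torus diameter interpolates through every positive value. The critical rescaling argument identifies a scale $\lambda^*$ where the combinatorial type undergoes a qualitative transition: either $k$ jumps from $k_0$ to a larger value, or $d$ crosses the threshold between $\le 1$ and $\ge 10$. At this critical scale, one extracts subsequential limits from both sides of $\lambda^*$, producing a pair of cones in $\Omega$ whose equivariant GH distance is arbitrarily small but whose types realize a forbidden configuration of either Proposition \ref{eGH_gap_1} (if the transition is in $k$ with $d$ collapsing to $0$ on the lower-$k$ side) or Proposition \ref{eGH_gap_2} (if the transition is in $d$ at fixed $k$), which is the desired contradiction. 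The main obstacle is engineering the critical rescaling so that the extracted pair of limits actually lands in one of the two forbidden configurations: in particular, one must arrange for the lower-$k$ side to reach exactly $d = 0$ in the limit to trigger the $\delta_1$-gap, or for both sides to fall in the $d \le 1$ vs $d \ge 10$ regime at matching $k$ to trigger the $\delta_2$-gap, which requires careful coordinated control over both components of the type under rescaling and taking limits.

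\textbf{Deriving the consequences.} From uniform type $(k_0, 0)$: since $d = 0$, the maximal torus $T$ of $G_0$ fixes $y$. Lemma \ref{max_torus} implies that any compact subgroup of $G_0$ lies in $T$, so $\mathrm{Iso}(y, G_0) = T$. By Lemma \ref{midpt_orb} the orbit $Gy$ is connected, hence $Gy = G_0 y \cong G_0/T$. The quotient $G_0/T$ is a connected nilpotent Lie group with trivial maximal torus, hence simply connected of dimension $k_0$ by Lemma \ref{nil_exp}; this gives (1). For (2), a compact subgroup $K \subseteq Z(G)$ acts on $Gy \cong G_0/T$ by translations (being central); a simply connected nilpotent Lie group admits no nontrivial compact subgroup of translations, so $K$ fixes $y$. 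For (3), $\mathrm{Iso}(y, G)$ is compact with identity component $T$, hence $\mathrm{Iso}(y, G)/T$ is finite; since $Gy = G_0 y$ forces $G = G_0 \cdot \mathrm{Iso}(y, G)$, we conclude $G/G_0 \cong \mathrm{Iso}(y, G)/T$ is finite.
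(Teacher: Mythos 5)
Your overall outline---distance gaps plus a critical rescaling argument, then Lie-theoretic deductions---matches the paper's strategy, and your derivations of consequences (1)--(3) from the uniform type $(k_0,0)$ are correct and in places cleaner than the paper's (e.g., for (3) the paper argues by contradiction using a sequence of isotropy elements in distinct components, whereas you directly observe $G=G_0\cdot\mathrm{Iso}(y,G)$ and $\mathrm{Iso}(y,G)/T$ finite; for (2) the paper explicitly treats the finite-cyclic case with an unboundedness argument, while your ``compact subgroup of right translations on the simply connected nilpotent group $G_0/T$'' argument is a uniform treatment). Your reduction $\mathrm{Iso}(y,G_0)=T$ and $Gy\cong G_0/T$ is correct.

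However, the core of the proof---establishing that every $(Y,y,G)\in\Omega(\widetilde{M},\mathcal{N})$ has type $(k_0,0)$---has a genuine gap that you partially acknowledge yourself. The critical rescaling you describe operates on a single fixed limit cone: the family $\lambda\mapsto(Y/\lambda,y,G)$. But along this family the Lie group $G$ and its maximal torus $T$ are literally unchanged, so $k=\dim G-\dim T$ is \emph{constant} in $\lambda$ while $d/\lambda$ varies continuously in $(0,\infty)$. There is no scale $\lambda^*$ at which the type undergoes a ``qualitative transition''; a jump in $k$ or a discontinuous crossing in $d$ is impossible within this one-parameter family, and two-sided limits at any finite $\lambda^*$ coincide. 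The distance-gap propositions compare \emph{distinct} spaces in $\Omega$ that are eGH-close yet have conflicting types; they never produce a contradiction from a single smooth rescaling family. The paper's actual critical rescaling operates one level up, on the blow-down sequence of $\widetilde{M}$ itself: it fixes a contradicting $(Y,y,G)$ of type $(k_0,d)$ with $d>0$ and $k_0$ \emph{minimal among types with positive torus diameter} (not the global minimum over $\Omega$, as in your setup), picks two blow-down scales $r_i\gg s_i$ whose limits are the rescalings $(10d^{-1}Y,y,G)$ and $(d^{-1}Y,y,G)$ of types $(k_0,10)$ and $(k_0,1)$, and then for each $i$ defines $L_i$ as the set of rescaling factors $l\ge 1$ for which $(lr_i^{-1}\widetilde{M},\tilde{p},\mathcal{N})$ is $\delta/10$-close to a ``bad'' space (type $(k,d)$ with $k<k_0$, or with $k=k_0$ and $d\le 1$). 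Taking $l_i$ near $\inf L_i$, one shows $l_i\to\infty$, passes to a limit $(Y',y',G')$, uses Lemma~\ref{dist_gap_lem} together with Propositions~\ref{eGH_gap_1} and~\ref{eGH_gap_2} to pin down its type, and in each remaining case shows a smaller factor ($l_i/2$ or $l_i/10$) already lies in $L_i$, contradicting near-minimality. The possibility of the dimension jumping is exactly what happens along the blow-down index $i$ (the limit group $G$ changes with $i$), which is what your single-$Y$ family cannot capture. Separately, the paper needs an additional step you omit: once $d=0$ for every space, connectedness and compactness of $\Omega$ (Proposition~\ref{cpt_cnt}) together with the $\delta_1$-gap give the uniformity of $k$ via a finite $\epsilon$-chain in $\Omega$.
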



We first prove a weaker statement without a uniform $k_0$.

\begin{lem}\label{width_zero}
	Under the assumptions of Proposition \ref{topol_dim}, let $(Y,y,G)\in \Omega(\widetilde{M},\mathcal{N})$. Then $(Y,y,G)$ is of type $(k,0)$ for some integer $k$.
\end{lem}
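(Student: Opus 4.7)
Proof proposal. The plan is to argue by contradiction. Suppose $(Y,y,G) \in \Omega(\widetilde{M},\mathcal{N})$ has type $(k,d)$ with $d>0$, and let $T \subset G_0$ be its maximal torus. The first observation is that $\Omega(\widetilde{M},\mathcal{N})$ is closed under metric rescaling: if $(Y,y,G)$ arises as the limit of $(r_i^{-1}\widetilde{M},\tilde{p},\mathcal{N})$, then $(\lambda Y,y,G)$ arises as the limit of $(\lambda r_i^{-1}\widetilde{M},\tilde{p},\mathcal{N})$, so $(\lambda Y,y,G)\in\Omega$ for every $\lambda>0$. Since $G$ and $T$ are metric-scale-invariant, the type of $(\lambda Y,y,G)$ is exactly $(k,\lambda d)$, giving a continuous one-parameter family in $\Omega$ whose torus-orbit diameter realizes every positive value.

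Next I would iterate a blow-up. Fix $\lambda_i\to\infty$ with $\lambda_i d \ge 10$, and pass by compactness of $\Omega$ (Proposition \ref{cpt_cnt}) to a subsequential limit $(Y_1,y_1,G_1)\in\Omega$ of type $(k_1,d_1)$. By Lemma \ref{dist_gap_lem}(2), either $k_1>k$, or $k_1=k$ and $d_1\ge 10$. In either case $d_1>0$, so the procedure iterates. Since $k_i$ is bounded by $n$, the integers $k_i$ stabilize at some $k^\star$, producing a space $(Y^\star,y^\star,G^\star)\in\Omega$ of type $(k^\star,d^\star)$ with $d^\star\ge 10$, such that every further subsequential blow-up limit has the same $k^\star$ and orbit diameter at least $10$.

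Finally, to extract a contradiction, I would compare $(Y^\star,y^\star,G^\star)$ with its shrink $((1/d^\star)Y^\star,y^\star,G^\star)\in\Omega$ of type $(k^\star,1)$: Proposition \ref{eGH_gap_2} bounds their eGH distance from below by $\delta_2$. The two spaces are connected inside $\Omega$ by the continuous scaling path $\mu\mapsto(\mu Y^\star,y^\star,G^\star)$, but along this path the type stays on $(k^\star,\mu d^\star)$ and never attains first coordinate greater than $k^\star$. The plan is to use critical rescaling: select a diagonal scale sequence $s_j=\mu_j r_j^{-1}$ in $\widetilde{M}$ (where $(r_j)$ produces $(Y^\star,y^\star,G^\star)$) at which the behavior of the $\mathcal{N}$-action transitions, producing an asymptotic cone off the scaling curve whose type has first coordinate strictly greater than $k^\star$, contradicting stability.

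The main obstacle is pinpointing the correct critical scale $\mu_j$ at which the maximal torus of $G^\star_0$ must "open up" into a non-compact $\mathbb{R}^{\dim T^\star}$ factor in the limit, thereby strictly increasing $k$. Within the rescaling family $\mu\mapsto(\mu Y^\star,y^\star,G^\star)$ the value $k^\star$ is constant, so the $k$-jump must come from an honest new asymptotic cone of $\widetilde{M}$ built via a diagonal sequence. Carrying this out requires careful control of how compact isotropies converge under blow-up, and it should combine the tunnel convergence results (Lemmas \ref{tunnel_conv} and \ref{tunnel_by_isotropy}) with the centrality of the maximal torus in a connected nilpotent Lie group (Lemma \ref{max_torus}), so that the stretched torus retains its central nilpotent structure in the limit group and genuinely contributes to $\dim G - \dim T$.
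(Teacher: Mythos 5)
Your high-level plan — argue by contradiction using the distance gaps of Propositions \ref{eGH_gap_1} and \ref{eGH_gap_2} together with a critical rescaling — is the right strategy, and indeed matches the one the paper uses. However, the proposal as written does not actually close, and the acknowledged ``main obstacle'' at the end is precisely the heart of the matter.

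Two concrete problems. First, the iterated blowup step is broken. After passing to a limit of $(\lambda_i Y,y,G)$ with $\lambda_i d\to\infty$, Lemma~\ref{dist_gap_lem}(2) guarantees either $k_1>k$, or $k_1=k$ with $d_1\ge 10$. In the first alternative it says \emph{nothing} about $d_1$; the maximal torus of the new group may very well fix the basepoint, giving $d_1=0$. In fact the limit torus $T_\infty$ of the stretched tori is non-compact (this is what pushes $k_1>k$), while the actual maximal torus of the limit group is a different, compact object whose orbit can have diameter zero. Your assertion ``in either case $d_1>0$'' is therefore unjustified, and the iteration can halt at a space of type $(k_1,0)$ — which is no contradiction at all, since that is exactly the conclusion the lemma predicts should hold.

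Second, you never make the extremal choice that makes the paper's critical-rescaling argument terminate. The paper does not blow up an arbitrary bad space; it picks $(Y,y,G)$ with $d>0$ and $k_0$ \emph{minimal} among all first coordinates that occur with a positive second coordinate. This minimality is what powers the case analysis: with $k_0$ chosen minimally, any limit space with $k'<k_0$ is forced to have $d'=0$, and then both cases in the contradiction argument can be pushed to a strictly smaller scale still lying in the auxiliary set of ``close-to-bad'' scales, contradicting the near-infimum choice of $l_i$. Without that extremal choice, there is nothing to contradict.

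Finally, the actual rescaling mechanism in the paper is not a comparison of the abstract scaling path inside $\Omega$, but a diagonal selection inside $\widetilde{M}$: one fixes two sequences of manifold scales converging to the type-$(k_0,10)$ and type-$(k_0,1)$ renormalizations of the same bad space, defines for each $i$ the set $L_i$ of rescalings of $r_i^{-1}\widetilde{M}$ that are $\delta/10$-close to some space with $k<k_0$, or $k=k_0$ and $d\le 1$, picks $l_i$ near $\inf L_i$, proves $l_i\to\infty$ via Proposition~\ref{eGH_gap_2}, and then shows that either $l_i/2$ or $l_i/10$ would also lie in $L_i$ — contradicting near-minimality. This is exactly the step you flag as an obstacle and leave open. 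So the proposal identifies the correct ingredients (rescaling invariance of $\Omega$, compactness, the eGH gaps, centrality of the maximal torus) but contains a gap on the blowup iteration and omits both the minimality choice of $k_0$ and the diagonal $L_i$-construction that together furnish the contradiction.
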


\begin{proof}
	We argue by contradiction. Suppose that the statement fails. Then we can choose $(Y,y,G)\in\Omega(\widetilde{M},\mathcal{N})$ with type $(k_0,d)$ such that\\
	(1) $d>0$, and\\
	(2) if another space $(Y',y',G')\in\Omega(\widetilde{M},\mathcal{N})$ is of type $(k',d')$ with $d'>0$, then $k'\ge k_0$.
	
	Let
	$$(Y_1,y_1,G_1)=(10d^{-1}Y,y,G),\quad (Y_2,y_2,G_2)=(d^{-1}Y,y,G)$$
	be two spaces in $\Omega(\widetilde{M},\mathcal{N})$. It is clear that they are of type $(k_0,10)$ and $(k_0,1)$, respectively. Let $r_i,s_i\to\infty$ such that
	$$(r_i^{-1}\widetilde{M},\tilde{p},\mathcal{N})\overset{GH}\longrightarrow (Y_1,y_1,G_1),\quad (s_i^{-1}\widetilde{M},\tilde{p},\mathcal{N})\overset{GH}\longrightarrow (Y_2,y_2,G_2).$$
	Passing to some subsequence, we can assume that $t_i:=r_i/s_i\to\infty$.
	Let 
	$$(N_i,q_i,\Gamma_i)=(r_i^{-1}\widetilde{M},\tilde{p},\mathcal{N}),$$ 
	then
	$$(N_i,q_i,\Gamma_i)\overset{GH}\longrightarrow (Y_1,y_1,G_1),\quad (t_iN_i,q_i,\Gamma_i)\overset{GH}\longrightarrow (Y_2,y_2,G_2).$$
	
	Let $\delta=\min\{\delta_1,\delta_2\}>0$, where $\delta_1$ and $\delta_2$ are the constants in Propositions \ref{eGH_gap_1} and \ref{eGH_gap_2}, respectively. For each $i$, we define a set of scales
	\begin{align*}
	L_i=\{ l\ge 1|&d_{GH}((lN_i,q_i,\Gamma),(W,w,H))\le \delta/10 \text{ for some} \\
	 &(W,w,H)\in\Omega(\widetilde{M},\mathcal{N}) \text{ such that } (W,w,H) \text{ has}\\
	 &\text{type $(k,d)$ with } k<k_0, \text{ or with } k=k_0 \text{ and } d\le 1 \}.
	\end{align*}
    Recall that $(Y_2,y_2,G_2)$ is of type $(k_0,1)$, thus $t_i\in L_i$ for all $i$ large. We choose $l_i\in L_i$ with $\inf L_i\le l_i \le \inf L_i+1/i$.
    
    \textbf{Claim 1:} $l_i\to\infty$. Suppose that $l_i\to l_\infty <\infty$ for a subsequence, then
    $$(l_iN_i,q_i,\Gamma_i)\overset{GH}\longrightarrow (l_\infty Y_1,y_1,G_1).$$
    Since $l_i\in L_i$, for each $i$ there is some $(W_i,w_i,H_i)$ with the properties in the definition of $L_i$ such that
    $$d_{GH}((l_iN_i,q_i,\Gamma_i),(W_i,w_i,H_i))\le \delta/10.$$
    Hence for $i$ large,
    $$d_{GH}((W_i,w_i,H_i),(l_\infty Y_1,y_1,G_1))\le \delta/2,$$
    where $(l_\infty Y_1,y_1,G_1)$ is of type $(k_0,10l_\infty)$ with $10l_\infty\ge 10$. Let $(k_i,d_i)$ be the type of $(W_i,w_i,H_i)$. If $k_i<k_0$, then $d_i=0$ by our choice of $k_0$, and the above Gromov-Hausdorff distance estimate cannot hold due to Proposition \ref{eGH_gap_1} and the choice of $\delta$. If $k_i=k_0$ and $d_i\le 1$, then it also leads to a contradiction due to Proposition \ref{eGH_gap_2}. Therefore, $l_i\to\infty$.
    
    Next, we consider the convergence
    $$(l_iN_i,q_i,\Gamma_i)\overset{GH}\longrightarrow (Y',y',G')\in\Omega(\widetilde{M},\mathcal{N}).$$
    Let $(k',d')$ be the type of $(Y',y',G')$.
    
    \textbf{Claim 2:} $k'\le k_0$; moreover, $d'< 10$ when $k'=k_0$. For each $i$, there is some $(W_i,w_i,H_i)$ with the properties in the definition of $L_i$ and
    $$d_{GH}((l_iN_i,q_i,\Gamma_i),(W_i,w_i,H_i))\le \delta/10.$$ 
    It follows that for $i$ large,
    $$d_{GH}((W_i,w_i,H_i),(Y',y',G'))\le\delta/2.$$
    If $k'>k_0$, then we end in a contradiction to Proposition \ref{eGH_gap_1}. If $k'=k_0$ and $d'>10$, then this contradicts with Proposition \ref{eGH_gap_2}. This proves Claim 2.
    
    By Claim 2 and our choice of $k_0$, $(Y',y',G')$ has type $(k',d')$ with one of the following cases:\\
    \textit{Case 1:} $k'<k_0$ and $d'=0$;\\
    \textit{Case 2:} $k'=k_0$ and $d'< 10$.\\
    For Case 1, we consider the convergence sequence
    $$(\frac{1}{2}l_iN_i,q_i,\Gamma_i)\overset{GH}\longrightarrow (\frac{1}{2}Y',y',G')\in\Omega(\widetilde{M},\mathcal{N}).$$
    The limit space $(\frac{1}{2}Y',y',G')$ is of type $(k',0)$, where $k'<k_0$. This implies that $l_i/2\in L_i$, a contradiction to $\inf L_i\le l_i\le \inf L_i+1/i$. For Case 2, we consider 
    $$(\frac{1}{10}l_iN_i,q_i,\Gamma_i)\overset{GH}\longrightarrow (\frac{1}{10}Y',y',G')\in\Omega(\widetilde{M},\mathcal{N}),$$
    where $(\frac{1}{10}Y',y',G')$ is of type $(k',d'/10)$ with $d'/10< 1$. We result in $l_i/10\in L_i$ for $i$ large and thus a desired contradiction.
    
    With all possibilities of $(Y',y',G')$ being ruled out, we complete the proof.
\end{proof}

Next, we prove Proposition \ref{topol_dim}.

\begin{proof}[Proof of Proposition \ref{topol_dim}]
	Let $(Y,y,G)$ and $(Y',y',G')$ in $\Omega(\widetilde{M},\mathcal{N})$ having type $(k,0)$ and $(k',0)$, respectively. We show that $k= k'$. Let $\delta_1$ be the constant in Proposition \ref{eGH_gap_1} and let $\epsilon=\delta_1/2$. Because the set $\Omega(\widetilde{M},\mathcal{N})$ is connected in the pointed equivariant Gromov-Hausdorff topology (Proposition \ref{cpt_cnt}), for the above $\epsilon>0$, there is a chain of elements $\{(W_j,w_j,H_j)\}_{j=1}^J$ in $\Omega(\widetilde{M},\mathcal{N})$ such that
	$$(W_1,w_1,H_1)=(Y,y,G),\quad (W_J,w_J,H_J)=(Y',y',G'),$$
	and
	$$d_{GH}((W_j,w_j,H_j),(W_{j+1},w_{j+1},H_{j+1}))\le \epsilon$$
	for all $j=1,...,J-1$.
	Applying Lemma \ref{width_zero}, we know that each $(W_j,w_j,H_j)$ is of type $(k_j,0)$ for some $k_j$. By Proposition \ref{eGH_gap_1} and our choice of $\epsilon$, all $k_j$ must be the same. In particular, we conclude $k=k'$.
	
	Now we proceed to prove the consequences (1)-(3).
	
	To prove (1), let $T$ be the maximal torus subgroup of $G_0$. Because $(Y,y,G)$ is of type $(k_0,0)$, $T$-action fixes $y$. Thus the orbit $Gy=G_0y$ can be naturally identified with the quotient group $G_0/T$, which is a simply connected nilpotent Lie group of dimension $k_0$.
	
	Next, we prove (2). Let $K$ be a compact subgroup of $G$. If $K$ is contained in a torus subgroup of $G$, then it is clearly that $Ky=y$ because $(Y,y,G)$ is of type $(k,0)$. In general, suppose that (2) fails, then we can find a finite cyclic subgroup $\langle h \rangle \subseteq K$ such that $hy\not= y$. Let $g\in G_0$ such that $hy=gy$. Note that $g$ is outside the maximal torus $T$ of $G_0$ because $T$ fixes $y$; in particular, $\langle g \rangle y$ is unbounded in $Y$. On the other hand, because $hy=gy$ and Lemma \ref{nil_G0_commute_cpt}, we have $h^k y=g^k y$ for all $k\in \mathbb{Z}$. Thus the set $\{g^k y| k\in\mathbb{Z}\}$ is bounded because $\langle h \rangle$ is a finite group. A contradiction.
	
	Lastly, we prove (3). Suppose that $G$ has infinitely many connected components. We consider the continuous map
	$$A: G \to Gy, \quad g \mapsto gy.$$
	Let $\mathcal{C}$ be any connected component of $G$. Because the orbit $Gy$ is connected, we have $A(\mathcal{C})=Gy$. Thus we can choose $g_{\mathcal{C}}\in \mathcal{C}$ such that $g_{\mathcal{C}}\cdot y=y$. By hypothesis, this gives a set of infinitely many elements 
	$$\mathcal{F}=\{g_\mathcal{C}\ |\  \mathcal{C} \text{ is a connected component of }G\}$$
	that fixes $y$. Since the isotropy subgroup of $G$ at $y$ is compact, we can find a convergence subsequence from $\mathcal{F}$. This contradicts with the fact that elements in $\mathcal{F}$ belong to distinct connected components of $G$.
\end{proof}

We complete this subsection by proving Theorem A(1).

\begin{proof}[Proof of Theorem A(1)]
	Let $\mathcal{N}$ be a torsion-free nilpotent subgroup of $\Gamma=\pi_1(M,p)$ with finite index. Let $r_i\to\infty$ and consider the convergence
	$$(r_i^{-1}\widetilde{M},\tilde{p},\mathcal{N},\Gamma)\overset{GH}\longrightarrow (Y,y,G,\overline{G}).$$
	We have already shown in Proposition \ref{topol_dim} that the orbit $Gy$ has a natural simply connected nilpotent group structure. Given that $Gy=G_0 y = G_0/T$ as explained in the proof of Proposition \ref{topol_dim}(1), it is also clear that
	$$\step(Gy)=\step(G_0/T)\le \step(G) \le \step(\mathcal{N})=\step(\pi_1(M)).$$
	
	It remains to show that $Gy=\overline{G}y$, which is standard. We give a proof for readers' convenience. Since $\mathcal{N}$ has finite index in $\Gamma$, we can write $\Gamma=\cup_{j=0}^k \mathcal{N} \gamma_j$ as union of cosets for some elements $\gamma_0=e,\gamma_1,...,\gamma_k$. Passing to a subsequence, we assume that
	$$(r_i^{-1}\widetilde{M},\tilde{p},\gamma_j)\overset{GH}\longrightarrow (Y,y,g_j),$$
	where $g_j \in \overline{G}$ fixing $y$. For any $h\in \overline{G}-G$, let $\alpha_i \in \Gamma$ be a sequence converging to $h$. We write $\alpha_i=\beta_i\cdot \gamma_{j(i)}$, where $\beta_i\in \mathcal{N}$, and then pass to a subsequence and assume $j(i)=b$ is a constant. Then $\beta_i=\alpha_i \cdot \gamma_b^{-1}$ has limit $hg_b^{-1}\in {G}$ by construction. Note that
	$$ hy= hg_b^{-1}(g_b y)=hg_b^{-1} y \in Gy.$$
	We conclude $Gy=\overline{G}y$ and hence complete the proof of Theorem A(1).
\end{proof}

\subsection{One-parameter orbits}\label{subsec_one_para_orbit}

\begin{defn}\label{def_one_para_orbit}
	Let $(Y,y,G)\in\Omega(\widetilde{M},\mathcal{N})$. Given an orbit point $z\in Gy-\{y\}$, we define the \textit{one-parameter orbit} through $z$ as the one-parameter subgroup of $Gy$ through $z$, where the simply connected nilpotent Lie group structure of $Gy$ is given in Proposition \ref{topol_dim}(1). By Lemma \ref{nil_exp}, this is well-defined. We denote this one-parameter orbit through $z$ as $\mathbb{R}z$, and the points on it as $tz$, where $t\in\mathbb{R}$.
\end{defn}

As indicated in its name, a one-parameter orbit is indeed the orbit of a one-parameter subgroup. More precisely, we have the description below.

\begin{lem}\label{one_para_orbit}
	Let $(Y,y,G)\in\Omega(\widetilde{M},\mathcal{N})$ and let $z\in Gy-\{y\}$ be an orbit point. We write $z=gy$, where $g\in G_0$. Let $\sigma:\mathbb{R}\to G_0$ be one-parameter subgroup with $\sigma(1)=g$. Then $\sigma(t)y=tz$ for all $t\in\mathbb{R}$.
\end{lem}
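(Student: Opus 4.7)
The plan is to reduce everything to the identification $Gy = G_0/T$, where $T$ is the maximal torus of $G_0$, and then invoke the uniqueness of one-parameter subgroups in a simply connected nilpotent Lie group (Lemma \ref{nil_exp}).

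First, I would unpack the simply connected nilpotent Lie group structure on $Gy$ coming from Proposition \ref{topol_dim}(1). Because $(Y,y,G)$ has type $(k_0,0)$, the maximal torus $T$ of $G_0$ fixes $y$, so the orbit map
$$A: G_0 \to Gy, \quad h\mapsto hy,$$
factors through a bijection $G_0/T \to Gy$, and this bijection is precisely the isomorphism that supplies the Lie group structure on $Gy$. In particular, $A$ is the composition of the quotient homomorphism $G_0 \to G_0/T$ with a Lie group isomorphism, so $A$ itself is a continuous group homomorphism onto $Gy$.

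Next, I would form the composition $\tau := A\circ \sigma : \mathbb{R}\to Gy$. As a composition of continuous group homomorphisms it is a one-parameter subgroup of $Gy$, and $\tau(1)=\sigma(1)y=gy=z$. By Lemma \ref{nil_exp}, the exponential map of the simply connected nilpotent Lie group $Gy$ is a diffeomorphism; hence there is a unique $v\in\mathrm{Lie}(Gy)$ with $\exp(v)=z$, and $t\mapsto \exp(tv)$ is the unique one-parameter subgroup of $Gy$ taking the value $z$ at $t=1$. By Definition \ref{def_one_para_orbit} this one-parameter subgroup is exactly $t\mapsto tz$. Therefore $\tau(t)=tz$, i.e., $\sigma(t)y=tz$ for all $t\in\mathbb{R}$.

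The only point requiring care is the compatibility between the two group structures on $Gy$: the one used to define $tz$ in Definition \ref{def_one_para_orbit} must be the one inherited via $Gy = G_0/T$ in Proposition \ref{topol_dim}(1), so that $A$ becomes a homomorphism. Once this identification is made explicit, the lemma follows at once from the uniqueness of one-parameter subgroups through a given nonidentity element of a simply connected nilpotent Lie group, and there is no serious obstacle.
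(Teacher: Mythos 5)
Your proof is correct and takes essentially the same approach as the paper: both pass to the quotient $G_0/T$ via the orbit map, observe that this quotient map (restricted to $G_0$) is a Lie group homomorphism onto $Gy$ with its nilpotent structure from Proposition~\ref{topol_dim}(1), and then invoke the uniqueness of one-parameter subgroups through a given element of a simply connected nilpotent Lie group (Lemma~\ref{nil_exp}). Your explicit remark about the compatibility of the two group structures on $Gy$ is exactly the point the paper handles implicitly by declaring the identification $Gy\cong G_0/T$.
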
	

\begin{proof}
	Let $T$ be the maximal torus subgroup of $G_0$. We naturally identify the orbit $Gy$ with the quotient group $G_0/T$ as in the proof of Proposition \ref{topol_dim}(1). Let $\pi:G_0 \to G_0/T$ be the quotient map and let $\bar{g}=\pi(g)$. Because $G_0/T$ is connected and simply connected, by Lemma \ref{nil_exp}, there is a unique one-parameter subgroup $\bar{\sigma}:\mathbb{R}\to G_0/T$ such that $\bar{\sigma}(1)=\bar{g}$. Since $\pi$ is a group homomorphism, we have $$\sigma(t)y=\pi\circ \sigma(t)=\bar{\sigma}(t).$$
	Because the choice of $\bar{g}\in G_0/T$ and $\bar{\sigma}$ are uniquely determined by the orbit point $z=gy$, the result follows.
\end{proof}

\begin{cor}\label{one_para_trans}
	Let $(Y,y,G)\in\Omega(\widetilde{M},\mathcal{N})$ and let $z\in Gy-\{y\}$ be an orbit point. Let $g\in G_0$ such that $z=gy$. Then $g\cdot (tz)=(1+t)z\in \mathbb{R}z$ for all $t\in\mathbb{R}$.
\end{cor}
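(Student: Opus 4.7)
The plan is to deduce this corollary directly from Lemma \ref{one_para_orbit} together with the one-parameter subgroup property of $\sigma$. First, I would fix a one-parameter subgroup $\sigma:\mathbb{R}\to G_0$ with $\sigma(1)=g$; such a $\sigma$ exists and is unique by Lemma \ref{nil_exp} since $G_0$ is simply connected nilpotent. Applying Lemma \ref{one_para_orbit} to the orbit point $z=gy$ yields the identity $tz=\sigma(t)y$ for all $t\in\mathbb{R}$, where $tz$ refers to the one-parameter orbit on $Gy$ coming from the nilpotent Lie group structure in Proposition \ref{topol_dim}(1).

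Next, I would act by $g$ on the left of $tz=\sigma(t)y$. Using $g=\sigma(1)$ and the homomorphism property $\sigma(1)\sigma(t)=\sigma(1+t)$ of the one-parameter subgroup, this gives
\[
g\cdot(tz)\;=\;\sigma(1)\cdot\sigma(t)\cdot y\;=\;\sigma(1+t)\cdot y.
\]
Finally, applying Lemma \ref{one_para_orbit} in the other direction, $\sigma(1+t)y=(1+t)z$, which is exactly the required conclusion, and by construction the point $(1+t)z$ lies on the one-parameter orbit $\mathbb{R}z$.

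There is essentially no obstacle here: the entire content has already been packaged into Lemma \ref{one_para_orbit}, which identifies the abstract one-parameter orbit $\mathbb{R}z\subseteq Gy$ with the orbit $\{\sigma(t)y\}_{t\in\mathbb{R}}$ of a genuine one-parameter subgroup of $G_0$. Once this identification is available, the corollary is a one-line consequence of the additive law $\sigma(1)\sigma(t)=\sigma(1+t)$. The only mild subtlety to mention is well-definedness: although $g$ alone does not determine a unique $\sigma$ inside $G_0$ when $G_0$ has a nontrivial maximal torus, any two choices project to the same one-parameter subgroup of $G_0/T$ (as in the proof of Lemma \ref{one_para_orbit}), so the resulting orbit point $(1+t)z\in Gy$ is independent of the choice of $\sigma$.
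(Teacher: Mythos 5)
Your proof is correct and takes essentially the same approach as the paper: fix a one-parameter subgroup $\sigma$ with $\sigma(1)=g$, invoke Lemma \ref{one_para_orbit} to identify $tz=\sigma(t)y$, and use the additive law $\sigma(1)\sigma(t)=\sigma(1+t)$ of one-parameter subgroups. One small internal inconsistency worth noting: in your opening paragraph you claim $\sigma$ is \emph{unique} by Lemma \ref{nil_exp} ``since $G_0$ is simply connected,'' but $G_0$ need not be simply connected here (it may carry a nontrivial maximal torus $T$; the hypothesis of being type $(k_0,0)$ only says $T$ fixes $y$, not that $T$ is trivial) --- you in fact correct this yourself in the final paragraph, where you correctly observe that the choice of $\sigma$ may not be unique but the resulting orbit point is independent of the choice. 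So the last paragraph supersedes the misstatement in the first; the argument itself goes through.
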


\begin{proof}
	Let $\sigma:\mathbb{R}\to G_0$ be a one-parameter subgroup of $G_0$ with $\sigma(1)=g$. By Lemma \ref{one_para_orbit}, for any $t\in\mathbb{R}$, we have
	$$g\cdot (tz)=g\cdot \sigma(t)y=\sigma(1+t)y=(1+t)z.$$
	The result follows.
\end{proof}

Given $\gamma\in \mathcal{N}-\{\mathrm{id}\}$ and $k\in\mathbb{Z}$, we denote
$$S(\gamma,k):=\{\gamma^m\ |\ m=0,\pm1,...,\pm k\},$$
which is a symmetric subset of $\langle \gamma \rangle$. Recall that $\mathcal{N}$ is torsion-free, thus $\gamma$ has infinite order. Below, we will use the notion of convergence of symmetric subsets; see Definition \ref{def_conv_symsubset}.

\begin{lem}\label{reduction_id}
	Let $\gamma\in \mathcal{N}-\{\mathrm{id}\}$. We consider the convergence
	$$(r_i^{-1}\widetilde{M},\tilde{p},\gamma)\overset{GH}\longrightarrow (Y,y,g_0).$$
	Then there is a non-decreasing sequence $\{t_i\}$ in $\mathbb{Z}_+$ such that after passing to a subsequence, we have
	$$(r_i^{-1}\widetilde{M},\tilde{p},\gamma^{t_i},S(\gamma,t_i))\overset{GH}\longrightarrow (Y,y,\mathrm{id},S)$$
	with $Sy=y$.
\end{lem}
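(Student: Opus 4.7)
The plan is to exploit the fact that in the asymptotic setting $r_i \to \infty$, the limit isometry $g$ is forced to fix $y$, hence lies in a compact group, so that the standard accumulation of powers in a compact abelian group can be combined with a diagonalization across $i$.

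First I would observe that since $r_i \to \infty$ while $|\gamma|$ is a fixed positive constant, $d(gy,y)=\lim_i r_i^{-1} d(\gamma\tilde p,\tilde p)=0$. Thus $g$ lies in the isotropy subgroup $\mathrm{Iso}(y,\mathrm{Isom}(Y))$, which is compact. Therefore $\overline{\langle g\rangle}$ is a compact abelian subgroup of $\mathrm{Isom}(Y)$. A standard argument in a compact group shows that $\mathrm{id}$ is an accumulation point of $\{g^n : n\ge 1\}$: any convergent subsequence $g^{n_k}\to h$ gives $g^{n_{k+1}-n_k}\to \mathrm{id}$. Hence there is a strictly increasing sequence $\tau_k\in\mathbb{Z}_+$ with $g^{\tau_k}\to \mathrm{id}$ in $\mathrm{Isom}(Y)$.

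Next I would run a diagonal argument. For each fixed $k$, the isometry $\gamma^{\tau_k}$ on $r_i^{-1}\widetilde M$ converges in the equivariant Gromov--Hausdorff sense to $g^{\tau_k}$ on $Y$ as $i\to\infty$. I choose $i_k$ strictly increasing so that along $r_{i_k}^{-1}\widetilde M$ the isometry $\gamma^{\tau_k}$ is within $1/k$ of $g^{\tau_k}$ in the equivariant GH sense, and simultaneously so that $\tau_k|\gamma|/r_{i_k}\le 1/k$. Setting $t_k:=\tau_k$ along the subsequence $\{r_{i_k}\}$ yields a non-decreasing sequence in $\mathbb{Z}_+$; combining the first control with $g^{\tau_k}\to \mathrm{id}$ gives $\gamma^{t_k}\to \mathrm{id}$ in the equivariant GH sense.

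For the orbit condition on $S$, the second control gives, for every $m$ with $|m|\le t_k$, the bound $r_{i_k}^{-1} d(\gamma^m\tilde p,\tilde p)\le |m|\cdot|\gamma|/r_{i_k}\le 1/k\to 0$. Using the precompactness of closed symmetric subsets under equivariant GH convergence (the proposition following Definition \ref{def_conv_symsubset}), after passing to a further subsequence $S(\gamma,t_k)$ converges to a closed symmetric subset $S\subseteq \mathrm{Isom}(Y)$, and every element of $S$ arises as a limit of a sequence whose displacement at $\tilde p$ tends to zero, hence fixes $y$; thus $Sy=y$.

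The main technical point I expect is the diagonalization step: I need $\gamma^{\tau_k}\to g^{\tau_k}$ not merely as a displacement statement at $\tilde p$ but as a convergence of full isometries on bounded balls, so that composing with $g^{\tau_k}\to \mathrm{id}$ legitimately yields $\gamma^{t_k}\to \mathrm{id}$ as isometries in the equivariant GH sense. This forces the choice of $i_k$ to accommodate three controls simultaneously (approximation scale, isometry closeness, and the quantitative bound $\tau_k|\gamma|/r_{i_k}\le 1/k$), but once one works within the Fukaya--Yamaguchi approximation-map framework of Definition \ref{defn_FY}, all three can be arranged by taking $i_k$ sufficiently large at each stage.
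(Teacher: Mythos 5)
Your proof is correct and follows essentially the same route as the paper's: observe that $g$ fixes $y$ (so lies in the compact isotropy group), find $\tau_k$ with $g^{\tau_k}\to\mathrm{id}$, then diagonalize across $i$ and use the displacement bound $r_{i_k}^{-1}d(\gamma^m\tilde p,\tilde p)\le\tau_k|\gamma|/r_{i_k}\to 0$ to conclude $Sy=y$. The only cosmetic difference is that the paper splits into the finite-order and infinite-order cases for $g$, while you handle both uniformly via the accumulation-of-powers argument in a compact group.
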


\begin{proof}
	Note that by construction $g_0$ fixes the base point $y$. If $g_0$ has finite order, we let $t\in \mathbb{Z}_+$ such that $g_0^t=\mathrm{id}$. Then it holds that
	$$(r_i^{-1}\widetilde{M},\tilde{p},\gamma^{t},S(\gamma,t))\overset{GH}\longrightarrow (Y,y,\mathrm{id},S)$$
	with $S$ fixing $y$.
	
	It remains to consider the case that $g_0$ has infinite order. Since $g_0$ fixes $y$, $\langle g_0 \rangle$ is a precompact subgroup of the isotropy subgroup at $y$. Let $t_j\to \infty$ be a sequence such that $g_0^{t_j}\to \mathrm{id}$ as $j\to\infty$. By a standard diagonal argument, we can choose a subsequence and $t_{i(j)}\to\infty$ such that
	$$(r_{i(j)}^{-1}\widetilde{M},\tilde{p},\gamma^{t_{i(j)}},S(\gamma,t_{i(j)}))\overset{GH}\longrightarrow (Y,y,\mathrm{id},\overline{\langle g_0 \rangle}).$$
	The result follows.
\end{proof}

\begin{lem}\label{limit_one_para}
	Let $\gamma\in \mathcal{N}-\{\mathrm{id}\}$ and $r_i\to\infty$. We choose
	$t_i$ as in Lemma \ref{reduction_id} and	
	$$k_i:=\min\{l \in\mathbb{Z}_+\ |\ d(\gamma^{t_i l} \tilde{p},\tilde{p}) \ge r_i\}\to\infty.$$
	Passing to a subsequence, we consider the convergence
	$$(r_i^{-1}\widetilde{M},\tilde{p},\gamma^{t_i k_i},S(\gamma^{t_i},k_i),\langle\gamma \rangle,\mathcal{N})\overset{GH}\longrightarrow (Y,y,g,A,H,G).$$
	We denote $z=gy$. Then\\
	(1) $A$ is connected, in particular $A\subseteq G_0$;\\
	(2) $Ay$ contains the set $\{tz|t\in[0,1]\}$;\\
	(3) $Hy$ contains the one-parameter orbit $\mathbb{R}z$; moreover, there is a one-parameter subgroup $L$ of $H_0$ such that $Ly=\mathbb{R}$z.
\end{lem}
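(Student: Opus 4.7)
The plan is to first fix notation and establish a few preliminary bounds, then prove $A\subseteq G_0$, construct the one-parameter subgroup $\sigma$ through $g$ in $G_0$, show $\sigma([-1,1])\subseteq A$ via a dyadic root argument, and finally deduce (1), (2), and (3). Setting $\epsilon_i := r_i^{-1} d(\tilde{p},\gamma^{t_i}\tilde{p})$, the hypothesis $\gamma^{t_i}\to\mathrm{id}$ coming from Lemma \ref{reduction_id} gives $\epsilon_i\to 0$, and the minimality of $k_i$ together with the triangle inequality yields $r_i\le d(\tilde{p},(\gamma^{t_i})^{k_i}\tilde{p})<r_i(1+\epsilon_i)$; in particular $d(y,gy)=1$, and every $(\gamma^{t_i})^m\tilde{p}$ with $|m|\le k_i$ lies in the rescaled ball of radius $1+\epsilon_i$, so each element of $A$ has displacement at most $1$ at $y$.

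For $A\subseteq G_0$: the set $\{(\gamma^{t_i})^m\tilde{p}\}_{|m|\le k_i}$ is an $\epsilon_i$-chain in $r_i^{-1}\widetilde{M}$, so its Hausdorff limit $Ay$ is connected and contained in the connected component $G_0y$ of $y$ in $Gy$. For any $h\in A$, Proposition \ref{C_tunnel} provides a tunnel in $G_0y$ from $y$ to $hy$; lifting this tunnel continuously to a path $\tilde\tau$ in $G_0$ starting at $\mathrm{id}$ via the principal bundle $G_0\to G_0y$, the endpoint $\tilde\tau(1)$ equals $hk$ for some $k\in\mathrm{Iso}(y,G_0)$. By Proposition \ref{topol_dim} the space $(Y,y,G)$ has type $(k_0,0)$, so the maximal torus $T$ of $G_0$ fixes $y$; combined with Lemma \ref{max_torus}, this forces $\mathrm{Iso}(y,G_0)=T\subseteq G_0$, hence $k\in G_0$ and $h=\tilde\tau(1)k^{-1}\in G_0$. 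Now $g\in A\subseteq G_0$, so by Lemma \ref{nil_exp} there is a unique $v\in\mathrm{Lie}(G_0)$ with $\exp(v)=g$; set $\sigma(s):=\exp(sv)$, which satisfies $\sigma(s)y=sz$ by Lemma \ref{one_para_orbit}.

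The heart of the argument is the claim $\sigma([-1,1])\subseteq A$. By a diagonal subsequence extraction, we arrange that $h_n:=\lim_i (\gamma^{t_i})^{\lfloor k_i/2^n\rfloor}$ exists in $G$ for every $n$. Writing $2^n\lfloor k_i/2^n\rfloor=k_i-r_i^{(n)}$ with $r_i^{(n)}\in\{0,\dots,2^n-1\}$, the factor $(\gamma^{t_i})^{-r_i^{(n)}}$ has rescaled displacement at $\tilde{p}$ bounded by $(2^n-1)\epsilon_i$, hence converges to $\mathrm{id}$; this gives $(h_n)^{2^n}=g$ in the limit. Uniqueness of $2^n$-th roots in the simply connected nilpotent $G_0$ (from the injectivity of $\exp$ in Lemma \ref{nil_exp}) forces $h_n=\sigma(2^{-n})$. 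For $|m|\le 2^n$, we have $|m\lfloor k_i/2^n\rfloor|\le k_i$, so $(h_n)^m=\sigma(m/2^n)$ lies in $A$, giving $\sigma(m/2^n)\in A$ for every dyadic rational in $[-1,1]$; closedness of $A$ and continuity of $\sigma$ then yield $\sigma([-1,1])\subseteq A$.

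The three statements follow: (2) is immediate since $tz=\sigma(t)y\in Ay$ for $t\in[0,1]$. For (3), an analogous dyadic argument applied to rational exponents $p/q$, using $(\gamma^{t_i})^{\lfloor (p/q)k_i\rfloor}$ whose rescaled orbit displacement is bounded by $\lceil |p/q|\rceil(1+\epsilon_i)$, shows $\sigma(s)\in H$ for every rational $s$ and hence, by closedness of $H$ and continuity of $\sigma$, for every real $s$; so $L:=\sigma$ is a connected subset of $H$ containing $\mathrm{id}$, hence $L\subseteq H_0$, and $Ly=\sigma(\mathbb{R})y=\mathbb{R}z$. For (1), applying the root construction with any $h\in A$ in place of $g$ produces a one-parameter subgroup $\sigma_h\subseteq G_0$ through $h$ with $\sigma_h([-1,1])\subseteq A$, giving a continuous path in $A$ from $\mathrm{id}$ to $h$ and hence path-connectedness of $A$. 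The main technical obstacle is that a direct continuity argument for $s\mapsto\lim_i(\gamma^{t_i})^{\lfloor sk_i\rfloor}$ fails in general, since the naive displacement bound degrades like $k_i\epsilon_i$ which can grow unboundedly; the dyadic root argument circumvents this by using only the unambiguous identification of $h_n$ as the unique $2^n$-th root of $g$, leveraging both the vanishing of the bounded remainder $(\gamma^{t_i})^{-r_i^{(n)}}$ (since $\gamma^{t_i}\to\mathrm{id}$) and the simply connected nilpotent structure of $G_0$.
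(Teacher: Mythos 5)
The core of your argument---extracting roots of $g$ via $\delta_i^{\lfloor k_i/2^n\rfloor}$, identifying them via uniqueness of roots in a simply connected nilpotent Lie group, and pushing dyadic rationals into $A$---is essentially the paper's argument for parts (2) and (3) (the paper uses $\lceil k_i/b\rceil$ for arbitrary $b\ge 2$ rather than dyadic $b=2^n$; this is immaterial). However, there is a genuine gap in your preliminary step establishing $A\subseteq G_0$, and since the root construction requires $g$ and the $h_n$ to lie in $G_0$, the gap propagates through the whole proof.

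The problem is circular reasoning in the tunnel-lifting step. You lift a tunnel $\tau$ from $y$ to $hy$ through the bundle $G_0\to G_0y$, obtaining a path $\tilde\tau$ in $G_0$ with $\tilde\tau(1)\in G_0$ and $\tilde\tau(1)y=hy$. This gives $k:=h^{-1}\tilde\tau(1)\in \mathrm{Iso}(y,G)$, but you assert $k\in\mathrm{Iso}(y,G_0)$. That assertion is equivalent to $k\in G_0$, i.e., $h\in\tilde\tau(1)G_0=G_0$, which is exactly what you are trying to prove. What you would actually need is $\mathrm{Iso}(y,G)\subseteq G_0$, and this is \emph{false} whenever $G\ne G_0$: since $Gy$ is connected we have $Gy=G_0y$, so any $g\in G- G_0$ can be written $g=g'\iota$ with $g'\in G_0$ and $\iota=g'^{-1}g\in\mathrm{Iso}(y,G)\setminus G_0$. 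Proposition \ref{topol_dim}(3) only gives finitely many components, not $G=G_0$, so the argument does not close. (Your closing claim that (1) follows by ``applying the root construction with any $h\in A$'' inherits this gap.) The paper instead proves connectedness of $A$ directly by contradiction: if $A$ were disconnected there would be a point $w$ at which $Aw$ is disconnected, and the $\epsilon$-chain argument (run at $w$, not at $y$, using $r_i^{-1}d(\gamma^{t_i}q_i,q_i)\to 0$ for $q_i\to w$) then contradicts this; this sidesteps the isotropy of $G$ at $y$ altogether, which is the whole difficulty.

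A smaller issue: in your proof of (3) you use $\delta_i^{\lfloor (p/q)k_i\rfloor}$ with $|p/q|>1$ and identify its limit as $\sigma(p/q)$ via uniqueness of $q$-th roots, but for $|p/q|>1$ that limit lies only in $H$, not in $A$, so you do not know it is in $G_0$ and cannot invoke uniqueness of roots there (roots are not unique in a non-connected nilpotent group). This one is easy to repair without the rational-exponent argument: once $\sigma(2^{-n})\in A\cap H\subseteq G_0$ is established, simply note $\sigma(m/2^n)=\sigma(2^{-n})^m\in H$ for all $m\in\mathbb Z$ since $H$ is a group, and then use density, continuity of $\sigma$, and closedness of $H$. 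That is exactly how the paper handles (3).
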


\begin{proof}
	(1) First note that by the choice of $k_i$ and the construction of $A$, it is clear that $Ay\subseteq \overline{B_1}(y)$.
	
	We argue by contradiction to prove that $A$ is connected. Suppose otherwise, then by Proposition \ref{G0_by_orbit} there is a point $w\in Y$ such that $Aw$ has multiple connected components. Let $\alpha w\in Aw$ be a point outside the connected component of $Aw$ that contains $w$. We choose a sequence of point $q_i\in\widetilde{M}$ and a sequence of integers $m_i$ within $[-k_i,k_i]$ such that
	$$(r_i^{-1}\widetilde{M},\tilde{p},q_i,\gamma^{t_im_i})\overset{GH}\longrightarrow (Y,y,w,\alpha).$$ 
	Replacing $\alpha$ by $\alpha^{-1}$ if necessary, we can assume that $m_i>0$. Recall that $t_i$ is chosen in Lemma \ref{reduction_id} such that $\gamma^{t_i}$ converges to identity under the convergence; in particular, we have
	$$r_i^{-1}d(\gamma^{t_i}q_i,q_i)\to 0$$
	as $i\to\infty$. Since $\alpha$ moves $w$, we must have $m_i\to\infty$.
	
	Let $\epsilon\ll d(w,\alpha w)$. For each sufficiently large $i$, we choose an integer $s_i$ within $[1,m_i]$ such that
	$$  \epsilon/2 \le r_i^{-1}d(\gamma^{t_is_i}q_i,q_i) \le \epsilon.$$
	We consider the symmetric subset 
	$$T_i=\langle \gamma^{t_is_i} \rangle \cap S(\gamma^{t_i}, m_i)$$
	and its convergence
	$$(r_i^{-1}\widetilde{M},\tilde{p},q_i,T_i)\overset{GH}\longrightarrow (Y,y,q,T).$$
	By construction and the compactness of $Tw\subseteq Aw \subseteq \overline{B_{1+2D}}(y)$, where $D=d(w,y)$, we can find an $\epsilon$-chain $\{y_0=y,y_1,....,y_N=\alpha w\}$ in $Tw\cup \{\alpha w\}\subseteq Aw$ such that
	$$d(y_j,y_{j+1})\le\epsilon$$
	for all $j$. Because the small $\epsilon>0$ is arbitrary, this shows that $\alpha w$ belongs to the same connected component of $Aw$ as $w$; a contradiction.
	
	(2) Note that by the choice of $k_i$, it holds that
	$$r_i \le d(\gamma^{t_i k_i}\tilde{p},\tilde{p})\le r_i+d(\gamma^{t_i}\tilde{p},\tilde{p})$$
	with $r_i^{-1}d(\gamma^{t_i}\tilde{p},\tilde{p})\to 0$. Thus $d(gy,y)=1$.
	By (1), we have $g\in A \subseteq G_0$.
	
	Below we write $\delta_i=\gamma^{t_i}$ for convenience. Let $b\ge 2$ be an integer. Due to the choice of $k_i$, we have
	$$d(\delta_i^{\lceil k_i/b \rceil}\tilde{p},\tilde{p})<r_i,$$
	where $\lceil\cdot\rceil$ is the ceiling function. After passing to a subsequence, we can assume that $$b\cdot \lceil k_i/b\rceil=k_i+b_0$$
	for all $i$, where $b_0\in\{0,1,...,b-1\}$, and the convergence
	$$(r_i^{-1}\widetilde{M},\tilde{p},\delta_i^{\lceil k_i/b \rceil},\delta_i^{b_0},\delta_i^{b\cdot \lceil k_i/b\rceil})\overset{GH}\longrightarrow (Y,y,\beta,\mathrm{id},g).$$
	By construction, we have $\beta^b=g;$ moreover, $\beta\in A\subseteq G_0$. Let $\sigma:\mathbb{R}\to G_0$ be a one-parameter subgroup of $G_0$ with $\sigma(1)=\beta$. Then
	$$\eta:\mathbb{R}\to G_0\quad t\mapsto \sigma(bt)$$
	defines a one-parameter subgroup of $G_0$ with $\eta(1)=\sigma(b)=g$. By Lemma \ref{one_para_orbit}, we have
	$$\sigma(bt)y=\eta(t)y=tz$$ 
	for all $t\in\mathbb{R}$, where $z=gy$. 
	
	We set $t=m/b$, where $m\in \{0,\pm1,...,\pm (b-1)\}$. Then we have
	$$\frac{m}{b}z=\sigma(m)y=\beta^m y.$$
	Note that $\beta^m$ is the limit of $\delta_i^{m\lceil k_i/b \rceil}\in S(\delta_i,k_i)$. Thus $\frac{m}{b}z \in Ay$ for all $b\ge 2$ and all $m\in \{0,\pm1,...,\pm (b-1)\}$. We conclude that $Ay$ must contain $\{tz|t\in[0,1]\}$.
	
	(3) Using the notations in the proof of (2), we choose $t=m/b$, where $m\in\mathbb{Z}$. Then similarly, we have
	$$\frac{m}{b}z=\beta^m y.$$
	Since $\langle \beta\rangle \in H$, we conclude that the orbit $Hy$ contains the points
	$$\left\{\frac{m}{b}z|m\in\mathbb{Z}\right\}.$$
	Because the integer $b\ge 2$ is arbitrary, $Hy$ must contain $\mathbb{R}z$.
	
	Lastly, we show that $\mathbb{R}z=Ly$ for a one-parameter subgroup $L$ of $H_0$. In fact, let $\mathcal{C}$ be the connected component of $Hy$ that contains $y$. Note that 
	$$\mathbb{R}z\subseteq \mathcal{C} \subseteq H_0y.$$
	Now the result follows from Lemma \ref{one_para_orbit}.
\end{proof}

\begin{rem}\label{rem_sym_one_para}
	In the context of Lemma \ref{limit_one_para}, if a sequence $n_i$ satisfies $n_i\gg {t_i k_i}$, then with respect to the convergence
	$$(r_i^{-1}\widetilde{M},\tilde{p},S(\gamma,n_i))\overset{GH}\longrightarrow (Y,y,B),$$
	$By$ must contain $\mathbb{R}z$. In fact, from the proof of Lemma \ref{limit_one_para}(2), we have 
	$$(r_i^{-1}\widetilde{M},\tilde{p},\delta_i^{\lceil k_i/b \rceil})\overset{GH}\longrightarrow (Y,y,\beta),\quad \frac{m}{b}z=\beta^m y,$$
	where $b\in\mathbb{Z}\cap [2,\infty)$ and $m\in\mathbb{Z}$.
    Together with $n_i\gg t_ik_i$,  $\frac{m}{b}z$ is the limit of $\delta_i^{m\lceil k_i/b \rceil}\cdot \tilde{p}\in S(\gamma,n_i)\cdot \tilde{p}$. It follows that $By$ contains $\mathbb{R}z$.
\end{rem}

\subsection{Proof of Proposition C(1)}\label{subsec_pf_c1}

We prove Proposition C(1) in this subsection. For convenience, we restate it as follows by using the terminology from Section \ref{subsec_one_para_orbit}.

\begin{prop}\label{c1}
		Let $(M,p)$ be an open $n$-manifold with $\mathrm{Ric}\ge 0$ and $E(M,p)\not= 1/2$. Let $\mathcal{N}$ be a torsion-free nilpotent subgroup of $\pi_1(M,p)$ with finite index and let $l$ be the nilpotency step of $\mathcal{N}$. 
		
		Then for every $\gamma\in \zeta_{l-1}(\mathcal{N})-\{\mathrm{id}\}$ and every $(Y,y,H,G)\in \Omega(\widetilde{M},\langle \gamma \rangle,\mathcal{N})$, the orbit $Hy$ is exactly the one-parameter orbit $\mathbb{R}z$, where $z\in Hy-\{y\}$. Moreover, the orbit $Hy$ can be represented by $\{\sigma(t)y|t\in \mathbb{R}\}$, where $\sigma:\mathbb{R}\to H_0 \subseteq Z(G)$ is a one-parameter subgroup through $g$ with $gy\in Hy$.
\end{prop}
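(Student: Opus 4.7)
The plan is to prove Proposition \ref{c1} in two phases: first establish the algebraic and topological structure of $H$ and the orbit $Hy$ by exploiting the centrality of $\gamma$; then rule out $\dim Hy \geq 2$ via a critical rescaling argument producing a space in $\Omega(\widetilde{M},\mathcal{N})$ that violates the uniform type given by Proposition \ref{topol_dim}. This mirrors the strategy of \cite[Section 3]{Pan_cone}.

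For the first phase, note that since $\gamma \in \zeta_{l-1}(\mathcal{N}) \subseteq Z(\mathcal{N})$, every power $\gamma^k$ commutes with all of $\mathcal{N}$, and this commutation relation passes to the equivariant limit $(r_i^{-1}\widetilde{M}, \tilde{p}, \langle\gamma\rangle, \mathcal{N}) \to (Y,y,H,G)$. Hence $H \subseteq Z(G)$, so $H_0$ is connected abelian and its maximal torus $T_H$ lies in $Z(G)$. Proposition \ref{topol_dim}(2) then forces $T_H y = y$; since compact closed subgroups of a connected abelian Lie group sit inside the maximal torus, we get $\mathrm{Iso}(y, H_0) = T_H$, so $Hy = H_0 y \cong H_0/T_H \cong \mathbb{R}^a$ for some $a \geq 0$. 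Lemma \ref{limit_one_para}(3) then guarantees $a \geq 1$. Moreover, for any $z = gy \in Hy \setminus \{y\}$ with $g \in H_0$, the unique one-parameter subgroup $\sigma: \mathbb{R} \to H_0$ through $g$ satisfies $\sigma(t)y = tz$ by the argument of Lemma \ref{one_para_orbit} applied to $H_0/T_H$. It thus remains to prove $a = 1$.

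For the second phase, suppose $a \geq 2$ and pick $w \in Hy \setminus \mathbb{R}z$, so $w$ arises as the limit of some $\gamma^{n_i} \tilde{p}$ in the $r_i^{-1}$-rescaled metrics. The growth rate of $|n_i|$ must be incompatible with the rate $t_i k_i$ witnessing $z$ in Lemma \ref{limit_one_para}, for otherwise $w$ would land on $\mathbb{R}z$. Following the critical rescaling technique of \cite[Section 3]{Pan_cone}, select an intermediate scale $s_i$ with $1 \leq s_i \leq r_i$ and extract a convergent subsequence
\begin{equation*}
(s_i^{-1}\widetilde{M}, \tilde{p}, \langle\gamma\rangle, \mathcal{N}) \overset{GH}\longrightarrow (Y', y', H', G') \in \Omega(\widetilde{M}, \langle\gamma\rangle, \mathcal{N}),
\end{equation*}
where $s_i$ is chosen so that under the $s_i^{-1}$-rescaling the $\mathbb{R}z$-direction collapses to the base point while a genuine one-parameter orbit of $H'$ transverse to this direction persists. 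By Proposition \ref{topol_dim}, $(Y', y', G')$ has the fixed type $(k_0, 0)$. However, the newly exposed transverse orbit of the $\langle\gamma\rangle$-action in $H'y'$ can be combined with an adapted basis for the remaining $k_0 - 1$ directions of $G'y'$ using the convergence of tunnels (Lemmas \ref{tunnel_conv} and \ref{conv_to_adapted_map}) to produce, after modding out by the maximal torus, a continuous injection $[0,1]^{k_0+1} \to (G'/T_{G'}) y'$ in the limit of the good approximating spaces. The large fiber lemma (Lemma \ref{large_fiber}) then yields a contradiction exactly as in the proof of Propositions \ref{eGH_gap_1} and \ref{eGH_gap_2}.

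The main obstacle will be the calibration of the critical scale $s_i$: one must verify simultaneously that (a) the $z$-direction collapses to a point under $s_i^{-1}$-rescaling, (b) the $w$-direction survives at unit size, and (c) this surviving direction genuinely manifests as a one-parameter orbit in $H'y'$ rather than collapsing into a torus orbit or dissipating across the limit. Once $s_i$ is correctly chosen and the transversal one-parameter orbit in $(Y', y', H')$ is verified, the dimensional contradiction against Proposition \ref{topol_dim} via the large fiber lemma proceeds along the lines already developed in Section \ref{sec_egh_gap}.
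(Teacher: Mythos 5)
Your Phase~1 is sound: the centrality of $\gamma$ passes to the limit to give $H\subseteq Z(G)$, Proposition~\ref{topol_dim}(2) kills the maximal torus $T_H$ at $y$, and together with Lemma~\ref{limit_one_para}(3) one sees $Hy=H_0y\cong H_0/T_H\cong\mathbb{R}^a$ for some $a\ge 1$. This part is essentially what the paper does implicitly. Phase~2, however, has two genuine problems, both of which prevent it from going through.

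First, the rescaling direction is backward. You say to pick $s_i$ with $1\le s_i\le r_i$ so that ``the $\mathbb{R}z$-direction collapses to the base point.'' But both $z$ and the putative transverse point $w=hy$ are captured as unit-size limits of $\gamma^{m_i}\tilde p$ under the $r_i^{-1}$-rescaling, so under a \emph{smaller} scale $s_i\le r_i$ those displacements grow rather than collapse. The paper's proof rescales in the \emph{opposite} direction: it sets $d_i=\max\{d(\gamma^k\tilde p,\tilde p)\,|\,k\in\mathbb{Z}\cap[t_ik_i,m_i]\}$, shows $d_i\gg r_i$ (Claim 3), and passes to the $d_i^{-1}$-blow-down, under which $\gamma^{m_i}$ does collapse to the identity ($h'y'=y'$).

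Second, and more fundamentally, the large-fiber-lemma endgame cannot produce a contradiction here. You propose to combine the ``extra'' $H'$-direction with an adapted basis for $G'$ to build an injection $[0,1]^{k_0+1}\to(G'/T_{G'})y'$. But $H'\subseteq G'$, so the orbit $H'y'$ is already contained in $G'y'$ — there is no additional direction to add, and a two-dimensional $Hy$ fits perfectly well inside the $k_0$-dimensional nilmanifold $Gy$ without creating dimensional excess. The contradiction has to come from somewhere else. In the paper it comes from a recurrence argument specific to cyclic groups: at the scale $d_i^{-1}$ the limit $B'y'$ of $S(\gamma,m_i)y$ is bounded (by the choice of $d_i$) and attains a point at distance $1$ from $y'$, yet one proves $B'y'$ is closed under multiplication (Claim 4), whence Corollary~\ref{bdd_sym_fix} forces $B'y'=\{y'\}$ — the contradiction. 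This uses Proposition~\ref{topol_dim}(2) through Corollary~\ref{bdd_sym_fix}, not the large fiber lemma or the gap propositions directly. You would need to replace your Phase~2 wholesale with something along these lines; calibrating an ``intermediate'' $s_i$ cannot save the dimension-counting plan, because the obstruction is algebraic (containment $H\subseteq G$), not metric.
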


We start with two corollaries below which follow directly from Proposition \ref{topol_dim}(2).

\begin{cor}\label{orbit_power_back}
	Let $(Y,y,H)\in\Omega(\widetilde{M},\langle \gamma \rangle)$ and let $h_1,h_2\in H$. Suppose that there is an integer $m\ge 2$ such that $h_1^m y=h_2^m y$, then $h_1y=h_2y$.
\end{cor}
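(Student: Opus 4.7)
The plan is to reduce the statement to Proposition \ref{topol_dim}(2) by showing that $\overline{\langle h_1^{-1}h_2\rangle}$ is a compact subgroup of $Z(G)$ for an appropriate limit $G$. First I would select a sequence $r_i\to\infty$ realizing $(Y,y,H)$ as an equivariant asymptotic cone of $(\widetilde{M},\langle\gamma\rangle)$, and then pass to a subsequence to upgrade this to a simultaneous convergence
$$(r_i^{-1}\widetilde{M},\tilde{p},\langle\gamma\rangle,\mathcal{N})\overset{GH}\longrightarrow (Y,y,H,G),$$
so that $H\subseteq G$ and $(Y,y,G)\in\Omega(\widetilde{M},\mathcal{N})$. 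Since $\gamma\in\zeta_{l-1}(\mathcal{N})\subseteq Z(\mathcal{N})$, every element of $\langle\gamma\rangle$ commutes with every element of $\mathcal{N}$; passing this commutation relation to the limit by continuity of the group operation shows $H\subseteq Z(G)$. In particular, $H$ is abelian.

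Next, set $k:=h_1^{-1}h_2\in H$. By abelianness, $k^m=h_1^{-m}h_2^m$, so the hypothesis $h_1^m y=h_2^m y$ is equivalent to $k^m y=y$. Therefore the orbit $\langle k\rangle y=\{y,ky,\dots,k^{m-1}y\}$ is finite, of cardinality at most $m$.

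Finally, I would argue that $\langle k\rangle$ is precompact in $H$. The isotropy subgroup $\mathrm{Iso}(y,H)$ is a compact subgroup of $H$ (isotropy of an isometric action on a proper metric space is compact), and $\langle k\rangle\cap\mathrm{Iso}(y,H)$ is the kernel of the orbit map $\langle k\rangle\to\langle k\rangle y$. Since the image has size at most $m$, the group $\langle k\rangle$ is a finite union of cosets of a compact set, hence precompact. Thus $\overline{\langle k\rangle}$ is a compact subgroup of $Z(G)$. Proposition \ref{topol_dim}(2) then forces $\overline{\langle k\rangle}\cdot y=y$; in particular $ky=y$, which is exactly $h_1 y=h_2 y$.

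The only substantive point is the containment $H\subseteq Z(G)$; once that is in hand the argument is a direct application of Proposition \ref{topol_dim}(2). Everything else is elementary, and no appeal to Proposition \ref{c1} is needed.
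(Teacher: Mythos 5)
Your proof is correct and follows essentially the same route as the paper's: realize $(Y,y,H)$ and $(Y,y,G)\in\Omega(\widetilde{M},\mathcal{N})$ simultaneously so that $H\subseteq Z(G)$, reduce to $k=h_1^{-1}h_2$ with $k^m y=y$, note $\overline{\langle k\rangle}$ is a compact subgroup of $Z(G)$, and invoke Proposition \ref{topol_dim}(2). The paper is a bit terser on why $\overline{\langle k\rangle}$ is compact — it simply asserts it from the finiteness of the orbit — and your version, passing through compactness of the isotropy group and writing $\langle k\rangle$ as a finite union of cosets of $\langle k\rangle\cap\mathrm{Iso}(y,H)$, spells out the needed justification.
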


\begin{proof}
	Let $r_i\to\infty$ such that
	$$(r_i^{-1}\widetilde{M},\tilde{p},\langle \gamma \rangle, \mathcal{N})\overset{GH}\longrightarrow (Y,y,H,G).$$
	
	We first claim that if an element $h\in H$ satisfies $h^m y=y$ for some integer $m\ge 2$, then $hy=y$. In fact, let $H\le G$ be the closure of the subgroup generated by $h$. By assumption, its orbit $Hy$ consists of at most $m$ many elements. In particular, $H$ is compact. By Proposition \ref{topol_dim}(2), we conclude $Hy=y$.
	
	Now let $h_1, h_2\in H$ such that $h_1^my=h_2^m y$. Because $H$ is abelian, we have 
	$$(h_2^{-1}h_1)^m y=h_2^{-m} h_1^m y=y.$$
	It follows from the Claim that $h_2^{-1}h_1y=y$.
\end{proof}

\begin{cor}\label{bdd_sym_fix}
	Let $(Y,y,H)\in\Omega(\widetilde{M},\langle \gamma \rangle)$ and let $S$ be a closed symmetric subset of $H$. Suppose that the set $Sy=\{sy|s\in S\}$ satisfies the following:\\
	(1) $Sy$ is closed under multiplication, that is, if $s_1,s_2\in S$, then $s_1s_2y\in Sy$;\\
	(2) $Sy$ is bounded.\\
	Then $Sy=\{y\}$.
\end{cor}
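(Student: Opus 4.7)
The plan is to promote the semigroup-style closure of $Sy$ to a genuine closed subgroup of $H$ whose orbit is bounded, then apply Proposition \ref{topol_dim}(2). First I would unpack structural constraints. Since $\gamma \in \zeta_{l-1}(\mathcal{N}) \subseteq Z(\mathcal{N})$, passing to the limit gives $H \subseteq Z(G)$; in particular $H$ is abelian.

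The core step is to show that the subgroup $T \subseteq H$ generated by $S$ satisfies $Ty \subseteq Sy$. Since $S = S^{-1}$, every element of $T$ is a finite product of elements of $S$, so it suffices to prove by induction on word length that any such product acting on $y$ lands in $Sy$. The inductive step goes as follows: assuming $s_1 \cdots s_n y = s' y$ for some $s' \in S$, commutativity of $H$ rewrites
\[
s_1 \cdots s_n s_{n+1} y \;=\; s_{n+1}(s_1 \cdots s_n y) \;=\; s_{n+1} s' y,
\]
which lies in $Sy$ by the closure hypothesis applied to the pair $s_{n+1}, s' \in S$. Taking $K := \overline{T} \subseteq H \subseteq Z(G)$ and using continuity of the orbit map, I obtain $Ky \subseteq \overline{Sy}$, a bounded set.

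Hence $K \subseteq H(R)$ for some finite $R$. Since $Y$ is a Ricci limit space, it is proper, so $H(R) = \{h \in H : d(hy,y) \le R\}$ is compact; then $K$, being closed in $H$, is compact. Proposition \ref{topol_dim}(2) now applies: any compact subgroup of $Z(G)$ fixes $y$, so $Ky = \{y\}$, and a fortiori $Sy \subseteq Ky = \{y\}$.

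There is no serious obstacle here; the one subtlety worth flagging is that the closure hypothesis does \emph{not} give a canonical group operation on $Sy$ itself (the assignment $sy \mapsto s$ is not well-defined when $H$ has isotropy at $y$), so the argument must be carried out in the ambient group $H$ rather than on the orbit, and commutativity of $H$ is used essentially to keep the induction closed.
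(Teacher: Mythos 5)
Your proof is correct and follows essentially the same route as the paper: pass to the subgroup $L = \langle S\rangle$, verify $Ly = Sy$, conclude that $\overline{L}$ is a compact subgroup of $H \subseteq Z(G)$ with bounded orbit, and invoke Proposition \ref{topol_dim}(2). The only quibble is your closing remark that commutativity of $H$ is ``used essentially'': it is convenient but not necessary, since the induction can be run from the inside out --- assuming $s_2\cdots s_{n+1}y = s''y$ with $s''\in S$, one gets $s_1\cdots s_{n+1}y = s_1(s''y) = (s_1 s'')y \in Sy$ directly from the closure hypothesis, with no rearrangement of factors.
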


\begin{proof}
	Let $L={\langle S \rangle}$, the subgroup generated by $S$. By assumptions, $Ly=Sy$ is bounded. Thus the closure $\overline{L}$ is a compact subgroup of $H$. It follows from Proposition \ref{topol_dim}(2) that $\overline{L}$ must fix $y$. Thus $Sy=Ly=\{y\}$.
\end{proof}

We are ready to prove Proposition \ref{c1}.

\begin{proof}[Proof of Proposition \ref{c1}]
	From Lemma \ref{limit_one_para}, we know that the limit orbit $Hy$ from
	$$(r_i^{-1}\widetilde{M},\tilde{p},\langle \gamma \rangle,\mathcal{N})\overset{GH}\longrightarrow (Y,y,H,G)$$
	contains a one-parameter orbit $\mathbb{R}z$, where $z=gy$ is constructed in Lemma \ref{limit_one_para}. Below, we continue to use the notations from Lemma \ref{limit_one_para}; in particular, we have $$(r_i^{-1}\widetilde{M},\tilde{p}),\gamma^{t_i k_i},S(\gamma^{t_i},k_i))\overset{GH}\longrightarrow (Y,y,g,A)$$ 
	with 
	$$g\in A\subseteq G_0,\quad d(gy,y)=1,\quad Ay\subseteq \overline{B_1}(y),$$
	where the sequences $t_i$ and $k_i$ are described in Lemmas \ref{reduction_id} and \ref{limit_one_para}. 
	
	We argue by contradiction to show that $Hy=\mathbb{R}z$. Suppose that there is $h\in H$ such that $hy\not\in \mathbb{R}z$.
	
	\textbf{Claim 1:} Without lose of generality, we can assume that $d(hy,\mathbb{R}z)\ge 2$.
	
	The element $h$ may not be in $G_0$. However, because $G$ has at most finitely many components (see Proposition \ref{topol_dim}(3)), we can find a power $n\in\mathbb{Z}_+$ such that $h^n\in G_0$. For this $h^n$, we still have the property that $h^n y \notin \mathbb{R}z$. In fact, suppose that $h^n y\in \mathbb{R}z$. By Lemma \ref{limit_one_para}(3), we have a one parameter subgroup $\eta:\mathbb{R}\to H_0$ such that $\eta(1)=g_0\in H_0$ and $g_0y=h^n y$. Then by Corollary \ref{orbit_power_back}, we see that $hy=\eta(1/n)y\in\mathbb{R}z$; a contradiction. Now that we have $h^n\in H\cap G_0$ with $h^n y\notin \mathbb{R}z$; next, we show that 
	$d((h^n)^m y,\mathbb{R}z)$ is unbounded as $m\to \infty$. In fact, let $\sigma:\mathbb{R}\to G_0$ be a one-parameter subgroup such that $\sigma(1)y=z$. Let $\overline{L}$ be the subgroup generated by elements in $\sigma$ and $T$, the maximal torus subgroup of $G_0$. Because $T$ is central in $G_0$, each element $\overline{L}$ can be expressed as $\sigma(t)\cdot \theta$, where $\theta\in T$. Moreover, $\overline{L}y=\mathbb{R}z$ because $T$ fixes $y$ according to Proposition \ref{topol_dim}; in particular, $h^n\notin \overline{L}$. By construction, the quotient group $G_0/\overline{L}$ is a connected and simply connected nilpotent Lie group and $G_0/\overline{L}$ acts on the quotient space $(Y/\overline{L},\bar{y})$. Let $q:G_0\to G_0/\overline{L}$ be the quotient homomorphism, then $q(h^n)$ generates a discrete $\mathbb{Z}$-subgroup in $G_0/\overline{L}$. Thus $d(q(h^n)^m\bar{y},\bar{y})$ is unbounded as $m\to\infty$. As a consequence, $d((h^n)^m y,\mathbb{R}z)$ is unbounded as $m\to \infty$. To this end, we choose $m$ such that $d((h^n)^m y,\mathbb{R}z)\ge 2$. Replacing $h$ by $h^{nm}$, we complete Claim 1.
	
	Let $m_i\in\mathbb{Z}$ such that
	$$(r_i^{-1}\widetilde{M},\tilde{p},\gamma^{m_i})\overset{GH}\longrightarrow (Y,y,h).$$
	Replacing $h$ by $h^{-1}$ if necessary, we can assume that $m_i>0$.
	
	\textbf{Claim 2:} $m_i \gg t_i k_i$.
	
	By $d(hy,y)\ge 2$ and the choice of $k_i$, we clearly have $m_i>t_ik_i$. To prove Claim 2, suppose that $m_i/(t_ik_i)\to C<\infty$ for a subsequence, then we can write
	$$m_i=\lfloor C \rfloor t_ik_i +o_i,$$
	where $\lfloor \cdot \rfloor$ is the floor function and $o_i\in \mathbb{Z}\cap [0,t_ik_i]$. Passing to a subsequence, we have convergence
	$$(r_i^{-1}\widetilde{M},\tilde{p},\gamma^{\lfloor C \rfloor t_ik_i},\gamma^{o_i},\gamma^{m_i})\overset{GH}\longrightarrow (Y,y,g^{\lfloor C \rfloor},\delta,h),$$
	where $\delta\in A$. Since $g\in G_0$, by Corollary \ref{one_para_trans}, we have
	$$g^{\lfloor C \rfloor}\cdot(tz)=(\lfloor C \rfloor+t) z\in\mathbb{R}z$$
	for all $t\in\mathbb{R}$. Consequently, 
	$$d(hy,\mathbb{R}z)=d(g^{\lfloor C \rfloor}\delta y,\mathbb{R}z)=d(\delta y,\mathbb{R}z)\le 1;$$
	A contradiction to  $d(hy,\mathbb{R}z)\ge 2$. This proves Claim 2.
	
	For each $i$, we define
	$$d_i=\max\{d(\gamma^k\tilde{p},\tilde{p})\ |\ k\in \mathbb{Z}\cap [t_ik_i,m_i] \}.$$
	
	\textbf{Claim 3:} $d_i \gg r_i$.
	
	It is clear that $d_i\ge r_i$. Suppose that $d_i/r_i\to C<\infty$ for a subsequence. Then we consider the convergence
	$$(d_i^{-1}\widetilde{M},\tilde{p},S(\gamma,m_i))\overset{GH}\longrightarrow (C^{-1}Y,y,B).$$
	By the proof of Lemma \ref{limit_one_para} and $m_i\gg t_ik_i$, $By$ must contain $\mathbb{R}z$ (see Remark \ref{rem_sym_one_para}). Hence $By$ is unbounded. On the other hand, by the choice of $d_i$, we should have $By\subseteq \overline{B_1}(y)$; a contradiction. This proves Claim 3.
	
	Next, we consider the blow-down under $d_i\to\infty$:
	$$(d_i^{-1}\widetilde{M},\tilde{p},\gamma^{m_i},S(\gamma,m_i),\langle \gamma \rangle)\overset{GH}\longrightarrow (Y',y',h',B',H').$$
	By the choice of $d_i$, we have $B'y'\subseteq \overline{B_1}(y')$. Also, note that Claim 3 implies that $h'y'=y'$.
	
	\textbf{Claim 4:} $B'y'$ is closed under multiplication.
	
	Let $\beta_1,\beta_2\in B'$. We shall show $\beta_1\beta_2y'\in B'y'$. We choose $b_{1,i},b_{2,i}\in \mathbb{Z}\cap[-m_i,m_i]$ such that
	$$(d_i^{-1}\widetilde{M},\tilde{p},\gamma^{b_{1,i}},\gamma^{b_{2,i}})\overset{GH}\longrightarrow (Y',y',\beta_1,\beta_2).$$
	Then $\beta_1\beta_2$ is the limit of $\gamma^{b_{1,i}+b_{2,i}}$. If $b_{1,i}+b_{2,i}\in [-m_i,m_i]$, then $\beta_1\beta_2\in B'$ clearly holds. If not, we write
	$$b_{1,i}+b_{2,i}=\pm m_i + o_i,$$
	where $o_i\in \mathbb{Z}\cap[-m_i,m_i]$. Passing to a subsequence if necessary, we have
	$$(d_i^{-1}\widetilde{M},\tilde{p},\gamma^{m_i},\gamma^{o_i})\overset{GH}\longrightarrow (Y',y',h',\beta_0),$$
	where $\beta_0\in B'$. Then with respect to the blow-down sequence of $\widetilde{M}$ by $d_i^{-1}$, we have
	$$\beta_1\beta_2y'=\lim \gamma^{o_i}\gamma^{\pm m_i} \tilde{p}=\beta_0(h')^{\pm 1}y'=\beta_0 y'\in B'y'.$$
	This proves Claim 4.
	
	Lastly, by Claim 4 and Corollary \ref{bdd_sym_fix}, we conclude that $B'y'=\{y'\}$. On the other hand, the choice of $d_i$ implies that $d_H(B'y',y')=1$. This contradiction shows that $Hy=\mathbb{R}z$ and thus completes the proof of Proposition \ref{c1}.
\end{proof}

We complete this section by proving a distance control on the one-parameter orbit.

\begin{lem}\label{boomer_control}
	Under the assumptions of Proposition C, there is a constant $C_1=C_1(\widetilde{M},\gamma)$ such that for any $(Y,y,H)\in\Omega(\widetilde{M},\langle\gamma \rangle)$ and any orbit point $z\in Hy-\{y\}$, we have
	$$d(tz,y)\le C_1\cdot d(z,y)$$
	for all $t\in [0,1]$.
\end{lem}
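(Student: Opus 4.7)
The plan is to combine the $C_0$-tunneled property of Proposition \ref{C_tunnel} with the topological structure $Hy\cong\mathbb{R}$ from Proposition C(1), via an intermediate value argument. By replacing $Y$ with $d(z,y)^{-1}Y$, which preserves membership in $\Omega(\widetilde{M},\langle\gamma\rangle)$ and leaves the one-parameter orbit structure on $Hy$ intact, we may assume $d(z,y)=1$; it then suffices to produce a constant $C_1$, depending only on $E(M,p)$ (and hence only on $\widetilde{M}$), such that $d(tz,y)\le C_1$ for all $t\in[0,1]$.

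First, I apply Proposition \ref{C_tunnel} to $(Y,y,H)$ to obtain a tunnel $\tau:[0,1]\to Hy$ from $y$ to $z$ with image contained in $\overline{B_{C_0(E)}}(y)$, where $C_0(E)$ is the universal constant from Proposition \ref{C_tunnel}. Next, by Proposition \ref{c1} the orbit $Hy$ equals the one-parameter orbit $\{\sigma(t)y:t\in\mathbb{R}\}$, where $\sigma:\mathbb{R}\to H_0$ is the one-parameter subgroup with $\sigma(1)y=z$; I claim that the orbit map
\[
\phi:\mathbb{R}\to Hy,\qquad t\mapsto tz=\sigma(t)y,
\]
is a homeomorphism. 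Continuity and surjectivity are immediate from Proposition \ref{c1}. For injectivity, suppose $\phi(t_0)=y$ for some $t_0\ne 0$; then $\phi$ is periodic and $Hy$ coincides with the image of the compact quotient $\mathbb{R}/t_0\mathbb{Z}$, hence is compact, contradicting the fact that $Hy$ is homeomorphic to $\mathbb{R}$ by Proposition C(1). Thus $\phi$ is a continuous bijection from $\mathbb{R}$ onto a space homeomorphic to $\mathbb{R}$; since any continuous injection $\mathbb{R}\to\mathbb{R}$ is strictly monotone and therefore open, $\phi$ is a homeomorphism.

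Now the image $\tau([0,1])$ is a connected subset of $Hy$ containing $\phi(0)=y$ and $\phi(1)=z$. Pulling back through the homeomorphism $\phi$, the set $\phi^{-1}(\tau([0,1]))$ is a connected subset of $\mathbb{R}$ containing $0$ and $1$, so it contains $[0,1]$. Consequently
\[
\{tz:t\in[0,1]\}=\phi([0,1])\subseteq\tau([0,1])\subseteq\overline{B_{C_0(E)}}(y),
\]
which gives $d(tz,y)\le C_0(E)$ for all $t\in[0,1]$. Setting $C_1:=C_0(E)$ finishes the proof after undoing the rescaling.

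The only delicate step is the verification that $\phi$ is a homeomorphism, which hinges on ruling out the periodic case; the key input there is the non-compactness of $Hy$ supplied by Proposition C(1), without which the intermediate value argument converting the tunnel into a bound on every $tz$ would fail.
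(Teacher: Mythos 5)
Your approach founders at the first step: Proposition~\ref{C_tunnel} does not provide a tunnel inside $Hy$. That proposition is stated for $(Y,y,G)\in\Omega(\widetilde{M},\Gamma)$ with $\Gamma=\pi_1(M,p)$ (after the reduction of Section~4, $\Gamma=\mathcal{N}$), and the tunnel it produces is a continuous path into the full orbit $Gy$. The reason is that its proof is driven by Lemma~\ref{midpt_orb}, whose ``midpoint'' orbit point $g'y$ comes from the escape-rate control of representing loops relative to the \emph{whole} orbit $\mathcal{N}\tilde{p}$, not the much smaller orbit $\langle\gamma\rangle\tilde{p}$. When $\langle\gamma\rangle$ has infinite index in $\mathcal{N}$, there is no reason for $g'$ to lie in the limit $H$ of $\langle\gamma\rangle$, nor can one apply Lemma~\ref{escape_index} to the infinite cover $\widetilde{M}/\langle\gamma\rangle$. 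Consequently the tunnel $\tau$ you obtain lives in $Gy$ (typically of dimension $k_0>1$), not in the line $Hy$, and your pull-back $\phi^{-1}(\tau([0,1]))$ is simply not defined along the whole path. The intermediate-value argument therefore collapses. The paper even flags this distinction in the remark immediately following the lemma: Proposition~\ref{C_tunnel} gives a controlled tunnel in $Gy$, whereas Lemma~\ref{boomer_control} is precisely the stronger assertion that the \emph{one-parameter orbit itself} is such a tunnel. Said differently, once one knows $Hy\cong\mathbb{R}$, having \emph{any} $C$-tunnel in $Hy$ from $y$ to $z$ is tautologically equivalent to the conclusion, so invoking the tunneled property for $H$ is circular.

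The paper instead argues by contradiction: assume $R_j:=\max_{t\in[0,1]}d(tz_j,y_j)\to\infty$ along some sequence in $\Omega(\widetilde{M},\langle\gamma\rangle)$ with $d(z_j,y_j)=1$, rescale by $R_j^{-1}$, pass the symmetric subsets $S_j=\{th_j\mid t\in[-1,1]\}$ to a limit $S'$, show $S'y'$ is bounded and closed under multiplication, and then invoke Corollary~\ref{bdd_sym_fix} (which rests on Proposition~\ref{topol_dim}(2)) to force $S'y'=\{y'\}$, contradicting the normalization that the rescaled orbit has a point at distance $1$. If you want to pursue a tunnel-based route, you would first need to establish an analogue of Proposition~\ref{C_tunnel} with the tunnel constrained to $Hy$, and there is no cheap way to extract that from the escape-rate hypothesis.
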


\begin{proof}
	Scaling $(Y,y,H)$ by a constant, we may assume that $d(z,y)=1$. Recall that by Proposition \ref{c1} we can choose $h\in H_0$ such that $z=hy$. We argue by contradiction and suppose that there are contradicting sequences $(Y_j,y_j,H_j)\in \Omega(\widetilde{M},\langle \gamma \rangle)$ and $h_j\in (H_j)_0$ with $d(h_jy_j,y_j)=1$ but
	$$R_j:=\max_{t\in [0,1]} d(tz_j,y_j)\to\infty$$
	as $j\to\infty$, where $z_j=h_jy_j\in Y_j$. For each $j$, we choose a one-parameter subgroup of $(H_j)_0$ through $h_j$ and use $t h_j$ to denote elements in the subgroup, where $t\in\mathbb{R}$. By Lemma \ref{one_para_orbit}, $(th_j)y_j=tz_j$ for all $t\in\mathbb{R}$. We consider symmetric subsets
	$$S_j=\{ th_j |t\in[-1,1]\}$$
	and the convergence
	$$(R_j^{-1}Y_j,y_j,H_j,S_j)\overset{GH}\longrightarrow (Y',y',H',S'),$$
	where $(Y',y',H')\in\Omega(\widetilde{M},\langle\gamma\rangle)$.
	Since $R_j\to\infty$ and $d(h_jy_j,y_j)=1$, we have $h_jy_j\overset{GH}\to y'$ and $S'y'\subseteq \bar{B}_1(y')$ with respect to the above convergence.
	
	\textbf{Claim:} The set $S'y'$ is closed under multiplication. The proof is similar to Claim 4 in the proof of Proposition \ref{c1}. Let $\beta_1,\beta_2\in S'$ and let $b_{j,1},b_{j,2}\in [-1,1]$ such that
	$$(R_j^{-1}Y_j,y_j,b_{j,1}h_j,b_{j,2}h_j)\overset{GH}\longrightarrow (Y',y',\beta_1,\beta_2).$$
	If $b_{j,1}+b_{j,2}\in[-1,1]$, then clearly $\beta_1\beta_2\in S'$. If not, we write
	$$b_{j,1}+b_{j,2}=\pm 1 + o_j,$$
	where $o_j\in [-1,1]$. In a convergent subsequence, we have
	$$(R_j^{-1}Y_j,y_j,h_j,o_jh_j)\overset{GH}\longrightarrow (Y',y',h',\beta_0)$$
	with $\beta_0\in S'$. Then with respect to this convergence,
	$$\beta_1\beta_2y'=\lim_{j\to\infty} (o_jh_j)\cdot(\pm h_j) y_j=\beta_0y'\in S'y'.$$
	This proves the Claim.
	
	Together with Corollary \ref{bdd_sym_fix}, we conclude $S'y'=y'$. On the other hand, by the construction of $S_j$ and $R_j$, the limit $S'y'$ should have a point with distance $1$ to $y'$. A contradiction.
\end{proof}

\begin{rem}
	Let $z\in Hy \subseteq Gy$ as in Lemma \ref{boomer_control} and let $d=d(z,y)$. Recall that by Proposition \ref{C_tunnel}, there is a tunnel $\sigma:[0,1]\to Gy$ from $y$ to $z$ that is contained in $\overline{B_{C_0 d}}(y)$. Lemma \ref{boomer_control} shows that we can follow a specific tunnel, the one-parameter orbit, such that it is contained in $\overline{B_{C_1 d}}(y)$.
\end{rem} 

\section{Hausdorff dimension and orbit distance estimates}\label{sec_haus}

This section studies the Hausdorff dimension of the orbit $Hy$ in $(Y,y,H)\in \Omega(\widetilde{M},\langle \gamma \rangle)$ and proves Proposition C(2).

In subsection \ref{subsec_haus_one_para}, we prove distance controls on the orbit $Hy$ and show that the supremum of $\dimH(Hy)$ among all $(Y,y,H)\in \Omega(\widetilde{M},\langle \gamma \rangle)$ can be obtained (Proposition \ref{dim_sup}). In subsection \ref{subsec_pf_c2}, we relate the Hausdorff dimension of $Hy$ to a lower bound on the orbit length (Proposition \ref{orbit_length_lower}) and then complete the proof of Proposition C(2). Lastly, we have a short subsection \ref{subsec_recover} that relates Proposition C(2) to previous results on virtual abelianness \cite{Pan_esgap,Pan_cone}.

\subsection{Hausdorff dimension of one-parameter orbits}\label{subsec_haus_one_para}

We fix an element $\gamma\in \zeta_{l-1}(\mathcal{N})-\{\mathrm{id}\}$. From Proposition C(1), we know that for all $(Y,y,H)\in\Omega(\widetilde{M},\langle\gamma \rangle)$, the orbit $Hy$ is homeomorphic to $\mathbb{R}$. For each $(Y,y,H)\in\Omega(\widetilde{M},\langle\gamma \rangle)$, we choose an orbit point $z\in Hy$ with $d(z,y)=1$. The choice of such a point $z$ may not be unique since the orbit $Hy$ may cross $\partial B_1(y)$ multiple times. By Lemma \ref{boomer_control}, we always have distance control
$$d(tz,y)\le C_1$$
for all $t\in[0,1]$. For convenience, we denote
$$\Omega(\widetilde{M},\langle \gamma \rangle,1)=\{(Y,y,H,z)|(Y,y,H)\in\Omega(\widetilde{M},\langle \gamma \rangle),z\in Hy\cap \partial B_1(y)\}.$$
Given $(Y,y,H,z)\in \Omega(\widetilde{M},\langle \gamma \rangle,1)$, for each $L\in\mathbb{Z}_+$, we define
$$\mathcal{O}^L_{(Y,y,H,z)}=\{tz\ |\ t\in[0,1/L]\}\subseteq Hy.$$
If the space $(Y,y,H,z)$ is clear, we shall write $\mathcal{O}^L$ for simplicity.

\begin{lem}\label{partition_dist_estimate}
Let $C_1=C_1(\widetilde{M},\gamma)$ be the constant in Lemma \ref{boomer_control}. Then the followings hold for all $(Y,y,H,z)\in \Omega(\widetilde{M},\langle\gamma \rangle,1)$ and all $L\in\mathbb{Z}_+$:\\
(1) $\mathrm{diam}(\mathcal{O}^L)\le 2C_1^3L^{-1/n}$;\\
(2) $(L+1)\cdot\mathrm{diam}(\mathcal{O}^{L+1})^s\le L\cdot \mathrm{diam}(\mathcal{O}^{L})^s+ C_1^s$, where $s\ge 1$.  
\end{lem}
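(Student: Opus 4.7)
The plan is to handle part (2) first, as it is a short consequence of the monotonicity $\mathcal{O}^{L+1}\subseteq \mathcal{O}^L$, and then attack (1) by reducing $\mathrm{diam}(\mathcal{O}^L)$ to $d_L:=d(\tfrac{1}{L}z,y)$ via Lemma \ref{boomer_control} and bounding $d_L$ by a Bishop--Gromov packing argument on $\widetilde{M}$.

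For (2), I observe that since $[0,\tfrac{1}{L+1}]\subseteq[0,\tfrac{1}{L}]$, we have $\mathcal{O}^{L+1}\subseteq \mathcal{O}^L$, so $\mathrm{diam}(\mathcal{O}^{L+1})\le \mathrm{diam}(\mathcal{O}^L)$. Applying Lemma \ref{boomer_control} with unit orbit point $z$ yields $\mathrm{diam}(\mathcal{O}^1)=\sup_{u\in[0,1]}d(uz,y)\le C_1$, and hence $\mathrm{diam}(\mathcal{O}^{L+1})\le C_1$ as well. Then
\begin{align*}
(L+1)\mathrm{diam}(\mathcal{O}^{L+1})^s &= L\,\mathrm{diam}(\mathcal{O}^{L+1})^s+\mathrm{diam}(\mathcal{O}^{L+1})^s \\
&\le L\,\mathrm{diam}(\mathcal{O}^L)^s + C_1^s,
\end{align*}
which is (2). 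Note that the argument only uses $s>0$, not $s\ge 1$.

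For (1), I will proceed in four steps. First, left-translation by $\sigma(s)$ along the one-parameter orbit is an isometry of $Y$, giving $d(sz,tz)=d(|s-t|z,y)$, so $\mathrm{diam}(\mathcal{O}^L)=\sup_{u\in[0,1/L]}d(uz,y)$; rescaling $Y$ by $d_L^{-1}$ and applying Lemma \ref{boomer_control} with the new unit point $\tfrac{1}{L}z$ yields $d(uz,y)\le C_1 d_L$ for $u\in[0,1/L]$, so $\mathrm{diam}(\mathcal{O}^L)\le C_1 d_L$. It therefore suffices to bound $d_L\le 2 C_1^2 L^{-1/n}$. Second, the $L+1$ points $\tfrac{k}{L}z$, $k=0,\ldots,L$, all lie in $\overline{B_{C_1}}(y)$ by Lemma \ref{boomer_control} applied at $z$, and they are mutually $(d_L/C_1)$-separated: from $d(\tfrac{i}{L}z,\tfrac{j}{L}z)=d(\tfrac{|i-j|}{L}z,y)$ and Lemma \ref{boomer_control} applied with unit point $\tfrac{k}{L}z$ and parameter $t=1/k\in(0,1]$, I get $d_L\le C_1 d(\tfrac{k}{L}z,y)$ for each $k\in\{1,\ldots,L\}$. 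Third, I transfer to $\widetilde{M}$: taking $r_i\to\infty$ realizing the convergence $(r_i^{-1}\widetilde{M},\tilde{p},\langle\gamma\rangle)\to(Y,y,H)$ and $m_i$ with $\gamma^{m_i}\tilde{p}\to z$ in the rescaled metric, the explicit construction in the proof of Lemma \ref{limit_one_para}(2) allows me to set $a_i:=\lceil m_i/L\rceil$ so that $\gamma^{ka_i}\tilde{p}\to \tfrac{k}{L}z$ uniformly in $k\in\{0,\ldots,L\}$. Consequently, for $i$ large the $L+1$ points $\gamma^{ka_i}\tilde{p}$ are pairwise at distance $\ge r_i d_L/(2C_1)$ and are contained in $B_{2C_1 r_i}(\tilde{p})$. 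Fourth, the balls $B_{r_i d_L/(4C_1)}(\gamma^{ka_i}\tilde{p})$ are pairwise disjoint, all have the same volume (since the $\gamma^{ka_i}$ are isometries of $\widetilde{M}$), and all sit inside $B_{3C_1 r_i}(\tilde{p})$; Bishop--Gromov under $\mathrm{Ric}\ge 0$ then gives
$$L+1 \le \frac{\mathrm{vol}(B_{3C_1 r_i}(\tilde{p}))}{\mathrm{vol}(B_{r_i d_L/(4C_1)}(\tilde{p}))}\le \left(\frac{12 C_1^2}{d_L}\right)^n,$$
whence $d_L\le 12 C_1^2 (L+1)^{-1/n}$ and thus $\mathrm{diam}(\mathcal{O}^L)\le 12 C_1^3 L^{-1/n}$.

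The main obstacle is that a direct packing argument in $Y$ is unavailable, because the asymptotic cone can be collapsed and need not carry intrinsic $n$-dimensional volume bounds; working downstairs on $\widetilde{M}$ with Bishop--Gromov is precisely what lets us exploit the dimension $n$ of the underlying manifold. Sharpening the numerical constant from $12C_1^3$ down to the stated $2C_1^3$ is a bookkeeping matter of optimizing the ball radii and absorbing the lower-order corrections coming from the finite-stage approximation $r_i^{-1}\widetilde{M}\to Y$; I expect the geometric content, rather than the constant, to be the real point of the lemma.
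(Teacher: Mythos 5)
Your proof is correct, but for part (1) you take a genuinely different route from the paper, and your stated reason for deviating is itself not quite right. After you establish the same two preparatory facts the paper uses --- that $\mathrm{diam}(\mathcal{O}^L)\le C_1\,d_L$ with $d_L=d(\tfrac{1}{L}z,y)$, and that the $L+1$ points $\{\tfrac{j}{L}z\}_{j=0}^L$ are pairwise $C_1^{-1}d_L$-separated inside $\overline{B_{C_1}}(y)$ --- the paper finishes with a packing argument \emph{on $Y$ itself}, using the renormalized limit measure. Your objection that ``a direct packing argument in $Y$ is unavailable because the asymptotic cone can be collapsed'' is mistaken: the renormalized limit measure on a Ricci limit space inherits the dimension-$n$ Bishop--Gromov volume comparison from $\widetilde{M}$ regardless of whether $Y$ is collapsed, and this is exactly what makes the packing bound $L\le(2C_1^2/d_L)^n$ available intrinsically on $Y$. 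Your workaround is nevertheless valid: transferring the separated configuration back to $\widetilde{M}$ via the approximating sequence and applying Bishop--Gromov there gives the same conclusion with a worse constant, which you rightly flag as bookkeeping. One small citation note: the identification $\gamma^{ka_i}\tilde{p}\to\tfrac{k}{L}z$ with $a_i=\lceil m_i/L\rceil$ is really the ``Claim'' inside the proof of Lemma~\ref{length_growth_estimate} (together with Corollary~\ref{orbit_power_back}), not Lemma~\ref{limit_one_para}(2) verbatim, whose hypotheses involve the separately chosen exponents $t_i,k_i$ from Lemma~\ref{reduction_id}. Part (2) matches the paper's proof essentially line for line.
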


\begin{proof}
   (1) We write $r=d(\frac{1}{L}z,y)$. First note that by Lemma \ref{boomer_control}, the points $\{\frac{j}{L}z\}_{j=0}^{L}$ are all contained in $\overline{B_{C_1}}(y)$. 
   
   We claim that the points $\{\frac{j}{L}z\}_{j=0}^{L}$ are pairwise $C_1^{-1}r$-disjoint. In fact, suppose that there are $j_1<j_2$ in $\{0,1,...,L\}$ with
   $$d\left(\frac{j_1}{L}z,\frac{j_2}{L}z\right)< C_1^{-1}r.$$
   Since $\mathbb{R}z$ is represented by the orbit of a one-parameter subgroup at $y$, we have
   $$d\left(\frac{j_1-j_2}{L}z,y\right)< C_1^{-1}r.$$
   However, by Lemma \ref{boomer_control}, 
   $$r=d((1/L)z,y)\le C_1\cdot d\left(\frac{j_1-j_2}{L}z,y\right)<r.$$
   A contradiction. This verifies the claim.
    
   Next, by a standard packing argument with respect to a limit renormalized measure on $Y$, we have 
   $$L\le \left(\dfrac{C_1}{C_1^{-1}r/2}\right)^n=(2C_1^2)^{n}\cdot r^{-n}.$$
   Thus
   $$\mathrm{diam}(\mathcal{O}^L)\le C_1\cdot r\le 2 C_1^{3}\cdot L^{-1/n}.$$
   (2) It is clear that
   $$\mathrm{diam}(\mathcal{O}^{L+1})\le \mathrm{diam}(\mathcal{O}^{L})$$
   because $\mathcal{O}^{L+1} \subseteq \mathcal{O}^{L}$. Thus
   \begin{align*}
   	(L+1)\cdot\mathrm{diam}(\mathcal{O}^{L+1})^s \le& L\cdot \mathrm{diam}(\mathcal{O}^{L})^s + \mathrm{diam}(\mathcal{O}^{L+1})^s\\
   	\le& L\cdot \mathrm{diam}(\mathcal{O}^{L})^s+C_1^s,
   \end{align*}
   where the last inequality holds because $\mathrm{diam}(\mathcal{O}^{L+1})\le C_1$.
\end{proof}

Recall that for a metric space $(X,d)$, we have definition
$$\mathcal{H}^s_\delta(X)=\inf \left\{\sum_{j=1}^\infty r_j^s\ \bigg|\  X\subseteq \cup_{j=1}^\infty B_j, \text{ where each $B_j$ has diameter } r_j\le\delta\right\},$$
then $s$-dimensional Hausdorff measure and Hausdorff dimension of $X$ are defined by
$$\mathcal{H}^s(X)=\lim_{\delta\to 0} \mathcal{H}^s_\delta(X),$$
$$\dimH(X)=\inf \{s>0| \mathcal{H}^s(X)=0\}=\sup \{s>0| \mathcal{H}^s(X)=\infty\}.$$
When $X$ is compact, we can use finite covers $\{B_j\}$ instead of countable ones to define $\mathcal{H}_\delta^s(X)$.

Next, we use equal partitions of $\mathcal{O}^1$ to give an alternative way to calculate its Hausdorff dimension.

\begin{defn}
We define 
$$\mathcal{E}^s(\mathcal{O}^1)=\liminf\limits_{L\to\infty} L\cdot \mathrm{diam}(\mathcal{O}^L)^s,$$
where $L$ takes values in $\mathbb{Z}_+$.
\end{defn}

\begin{lem}\label{Hdim_equal_partition}
	Given $s\ge 1$, there is a constant $C_2=C_2(\widetilde{M},\gamma,s)>1$ such that
	$$\mathcal{H}^s(\mathcal{O}^1_{(Y,y,H,z)})\le\mathcal{E}^s(\mathcal{O}^1_{(Y,y,H,z)})\le C_2\cdot  \mathcal{H}^s(\mathcal{O}^1_{(Y,y,H,z)})$$
	holds for any $(Y,y,H,z)\in\Omega(\widetilde{M},\langle\gamma\rangle,1)$. As a consequence, 
	$$\dimH(Hy)=\inf \{s>0| \mathcal{E}^s(\mathcal{O}^1)=0\}=\sup \{s>0| \mathcal{E}^s(\mathcal{O}^1)=\infty \}.$$
\end{lem}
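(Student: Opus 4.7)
For $\mathcal{H}^s(\mathcal{O}^1)\le \mathcal{E}^s(\mathcal{O}^1)$, I will use the equal partition itself as an admissible cover. For each $L\in\mathbb{Z}_+$, write $\mathcal{O}^1=\bigcup_{k=0}^{L-1}\mathcal{O}^L_k$; each piece $\mathcal{O}^L_k$ is the image of $\mathcal{O}^L$ under the isometry $\sigma(k/L)$ coming from the one-parameter subgroup of Proposition \ref{c1} (Corollary \ref{one_para_trans} gives $\sigma(k/L)\cdot (tz)=(t+k/L)z$), hence has diameter exactly $\mathrm{diam}(\mathcal{O}^L)$. This immediately yields $\mathcal{H}^s_{\mathrm{diam}(\mathcal{O}^L)}(\mathcal{O}^1)\le L\cdot\mathrm{diam}(\mathcal{O}^L)^s$; since $\mathrm{diam}(\mathcal{O}^L)\to 0$ by Lemma \ref{partition_dist_estimate}(1), passing to the liminf in $L$ delivers the inequality.

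\textbf{Second inequality.} For $\mathcal{E}^s(\mathcal{O}^1)\le C_2\,\mathcal{H}^s(\mathcal{O}^1)$, the plan is, given any cover $\{B_j\}_{j=1}^N$ of $\mathcal{O}^1$ by open balls of radii $r_j$ realizing $\sum_j r_j^s\le\mathcal{H}^s(\mathcal{O}^1)+\eta$, to produce a single integer $L$ with $L\cdot\mathrm{diam}(\mathcal{O}^L)^s\le C_2\sum_j r_j^s$; since the cover is arbitrary, this gives the inequality after taking the infimum. My choice of $L$ would be so that $\mathrm{diam}(\mathcal{O}^L)\le \min_j r_j$, which is possible by Lemma \ref{partition_dist_estimate}(1). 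The uniform $\mathrm{diam}(\mathcal{O}^L)/C_1$-separation of the grid points $\{(k/L)z\}_{k=0}^{L}$ established in the proof of Lemma \ref{partition_dist_estimate}(1), combined with RCD$(0,n)$ doubling in $Y$, then bounds the number of pieces meeting a fixed ball $B_j$ by $C(n,C_1)(r_j/\mathrm{diam}(\mathcal{O}^L))^n$; assigning each of the $L$ pieces to one ball it meets, summing, and invoking Lemma \ref{partition_dist_estimate}(2) to absorb the defect coming from rounding the ideal real scale to an integer should produce the required estimate. The regime $s\ge n$ needs no cover argument: Lemma \ref{partition_dist_estimate}(1) already gives $L\cdot\mathrm{diam}(\mathcal{O}^L)^s\le(2C_1^3)^sL^{1-s/n}\to 0$, which matches $\mathcal{H}^s(\mathcal{O}^1)=0$ (valid since $\mathcal{O}^1\subset Y$ and $Y$ is an RCD$(0,n)$ limit, whose Hausdorff dimension is at most $n$).

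\textbf{Consequence and main obstacle.} The two inequalities make $\mathcal{H}^s(\mathcal{O}^1)$ and $\mathcal{E}^s(\mathcal{O}^1)$ vanish and diverge together for every $s>0$, so
$$\inf\{s>0:\mathcal{E}^s(\mathcal{O}^1)=0\}=\inf\{s>0:\mathcal{H}^s(\mathcal{O}^1)=0\}=\dimH(\mathcal{O}^1)$$
and likewise for the sup characterization. Since $Hy=\mathbb{R}z=\bigcup_{k\in\mathbb{Z}}\{tz:t\in[k,k+1]\}$ is a countable union of isometric translates of $\mathcal{O}^1$ via integer shifts in the one-parameter subgroup (each an isometry of $Y$), one has $\dimH(Hy)=\dimH(\mathcal{O}^1)$, completing the consequence. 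The principal obstacle I expect is the exponent mismatch in the second inequality: the packing estimate for the grid points is $n$-dimensional whereas the cover weight is $s$-dimensional, so the choice of $L$ must be calibrated to the cover in a way that remains uniform over all $(Y,y,H,z)\in\Omega(\widetilde{M},\langle\gamma\rangle,1)$. It is here that Lemma \ref{partition_dist_estimate}(2), which asserts $a_{L+1}\le a_L+C_1^s$ for $a_L=L\cdot\mathrm{diam}(\mathcal{O}^L)^s$, becomes the workhorse that allows one to replace the ideal real scale by its integer neighbor without losing comparability.
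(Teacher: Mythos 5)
Your first inequality is essentially the paper's own argument: the $L$ intervals $\{tz:t\in[(j-1)/L,j/L]\}$ are isometric translates of $\mathcal{O}^L$ (via Corollary \ref{one_para_trans}), each of diameter exactly $\mathrm{diam}(\mathcal{O}^L)$, so they form an admissible cover at scale $\mathrm{diam}(\mathcal{O}^L)\to 0$. Your derivation of $\dimH(Hy)=\dimH(\mathcal{O}^1)$ from isometric integer shifts is also fine.

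The second inequality, however, has a genuine gap, and the obstacle you flag is not a nuisance to be absorbed by Lemma \ref{partition_dist_estimate}(2) — it is fatal to the packing approach. Your scheme would give
\[
L\cdot\mathrm{diam}(\mathcal{O}^L)^s \;\le\; C(n)\,\mathrm{diam}(\mathcal{O}^L)^{s-n}\sum_j r_j^{\,n},
\]
and since you chose $\mathrm{diam}(\mathcal{O}^L)\le\min_j r_j$, the factor $r_j^{\,n}/\mathrm{diam}(\mathcal{O}^L)^{n-s}$ is $\ge r_j^{\,s}$ for every $j$ with $r_j>\mathrm{diam}(\mathcal{O}^L)$, i.e.\ the inequality points the wrong way whenever $s<n$, which is exactly the regime of interest (the orbit is a curve, so one expects $\dimH<n$). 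Lemma \ref{partition_dist_estimate}(2) only controls the increment $a_{L+1}-a_L$ of $a_L=L\cdot\mathrm{diam}(\mathcal{O}^L)^s$; it lets you pass from an ideal real scale to a nearby integer $L$ without loss, but it does nothing to fix a cover bound that is off by a power $r_j^{\,n-s}/\mathrm{diam}(\mathcal{O}^L)^{\,n-s}$. The $n$-dimensional doubling count for grid points is simply too weak: without knowing a priori that the orbit is Ahlfors $s$-regular, you cannot replace the exponent $n$ by $s$.

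The paper avoids this entirely with a pigeonhole argument that never counts grid points in balls. It first replaces each $B_j$ by its interval hull $B_j'\subset\mathcal{O}^1$ (with $\mathrm{diam}(B_j')\le C_1 r_j$ by Lemma \ref{boomer_control}), prunes to a non-redundant chain $\{D_j'\}$ with parameter lengths $\tau_j$ summing to $1$, and then picks the index $j_0$ minimizing $d_{j_0}^s/\tau_{j_0}=:\rho$. Choosing $L$ with $1\le L\tau_{j_0}\le 2$ makes $\mathcal{O}^L$ a subinterval of (a translate of) $D_{j_0}'$, so $L\cdot\mathrm{diam}(\mathcal{O}^L)^s\le L\rho\tau_{j_0}\le 2\rho\le 2\sum_j d_j^s\le 4C_1^s\,\mathcal{H}^s_\delta(\mathcal{O}^1)$, uniformly in $s\ge 1$. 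This minimization-over-pieces step, which converts $\sum\tau_j=1$ into a lower bound $\sum d_j^s\ge\rho$, is the missing idea; your packing count cannot substitute for it. (Also, your $s\ge n$ shortcut only handles $s>n$; at $s=n$ the bound $L\cdot\mathrm{diam}(\mathcal{O}^L)^n\le(2C_1^3)^n$ gives boundedness, not the required vanishing when $\mathcal{H}^n(\mathcal{O}^1)=0$.)
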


\begin{proof}
	We first show that $\mathcal{H}^s(\mathcal{O}^1)\le \mathcal{E}^s(\mathcal{O}^1)$, which is straightforward. Let $L\in\mathbb{Z}_+$. Note that $\mathcal{O}^L$ has diameter at most $2C_1^3L^{-1/n}$ by Lemma \ref{partition_dist_estimate}(1). For each $j=1,...,L$, we define 
	$$\mathcal{B}_j=\{tz\ |\ t\in [(j-1)/L,j/L]\}.$$
	Then $\{\mathcal{B}_j\}_{j=1}^L$ covers $\mathcal{O}^1$ and each $\mathcal{B}_j$ has diameter at most $2C_1^3 L^{-1/n}$. This shows that for $\delta=2C_1^3L^{-1/n}$, we have
	$$\mathcal{H}^s_\delta(\mathcal{O}^1) \le \sum_{j=1}^{L} \mathrm{diam}(\mathcal{B}_j)^s= L\cdot \mathrm{diam}(\mathcal{O}^L)^s.$$
	Let $L\to\infty$, then $\delta\to 0$ and we conclude that $\mathcal{H}^s(\mathcal{O}^1)\le \mathcal{E}^s(\mathcal{O}^1)$.
	
	Next, we prove that $\mathcal{E}^s(\mathcal{O}^1)\le C_2\cdot  \mathcal{H}^s(\mathcal{O}^1)$ for some constant $C_2(\widetilde{M},\gamma,s)$. Let $\delta>0$. Let $\{B_j\}_{j=1}^J$ be a cover of $\mathcal{O}^1$ such that each $B_j$ is contained in $\mathcal{O}^1$, has diameter $r_j\le \delta$, and
	$$\sum_{j=1}^J r_j^s \le 2 \cdot\mathcal{H}^s_\delta(\mathcal{O}^1).$$
	By replacing $B_j$ by its closure, without loss of generality, we can assume that all $B_j$ are closed. For each $j$, let $B'_j\subseteq \mathcal{O}^1$ be the smallest connected closed subset that contains $B_j$; in other words, we put
	$$\alpha_j=\min\{t|tz\in B_j\},\quad \beta_j=\max\{t|tz\in B_j\},$$
	and
	$$B'_j=\{tz|t\in[\alpha_j,\beta_j]\}.$$ 
	Note that by Lemma \ref{boomer_control},
	$$\mathrm{diam}(B'_j)\le C_1\cdot d(\alpha_j z,\beta_j z)\le  C_1r_j.$$
	
	We further modify the cover $\{B'_j\}_{j=1}^{J}$ to a cover $\{D'_j\}_{j\in I}$ in the following way, where the index set $I$ has cardinality at most $J$ and the adjacent $D'_j$ and $D'_{j+1}$ have exactly one common point. We define the index set
	$$I=\{j=1,...,J | \text{$B'_j$ is not contained in $B'_m$ for any $m\not= j$}\}.$$
	Clearly, $\{B'_j\}_{j\in I}$ still covers $\mathcal{O}^1$. We rearrange $\{B'_j\}_{j\in I}$ by the order of their left endpoints and then relabel the sets in $\{B'_j\}_{j\in I}$ to $\{D_j\}_{j=1}^{|I|}$. Let
	$$\alpha'_j=\min\{t|tz\in D_j\},\quad \beta'_j=\max\{t|tz\in D_j\};$$
	then $\alpha'_j< \alpha'_{j+1}$, $\beta'_j<\beta'_{j+1}$ for all $j=1,...,|I|-1$ and $\beta'_{|I|}=1$. Now we define
	$$D'_j=\{tz|t\in[\alpha'_j,\alpha'_{j+1}]\}$$
	for each $j=1,...,|I|-1$ and define the last one $D'_{|I|}=D_{|I|}$. By construction, $\{D'_j\}_{j=1}^{|I|}$ covers $\mathcal{O}^1$; moreover, each $D'_j$ is not a single point and the adjacent two share exactly one common point.

	Let $d_j=\mathrm{diam}(D'_j)$. By construction, each $D'_j$ is contained in $D_{j}$, and each $D_j$ is indeed the relabel of some unique $B'_{m}$, thus 
	$$\sum_{j=1}^{|I|} d_j^s \le \sum_{j=1}^{J}\mathrm{diam}(B'_j)^s\le C_1^s\cdot \sum_{j=1}^{J} r_j^s\le  2C_1^s \cdot\mathcal{H}^s_\delta(\mathcal{O}^1).$$
	For each $j= 1,...,|I|$, we put
	$$\tau_j=\max\{t|tz\in D'_j\}-\min\{t|tz\in D'_j\}.$$
	Because two adjacent $D'_j$ and $D'_{j+1}$ share exactly one common point, we see that $\sum_{j=1}^{|I|}\tau_j=1$. Let $j_0\in\{1,...,|I|\}$ such that
	$$\dfrac{d_{j_0}^s}{\tau_{j_0}}=\min_{j\in I} \dfrac{d_j^s}{\tau_j}=:\rho.$$
	We choose $L\in\mathbb{Z}_+$ with $1\le L\tau_{j_0}\le 2$. 
	Then
	\begin{align*}
    &L\cdot \mathrm{diam}(\mathcal{O}^L)^s \le  L\cdot \mathrm{diam}(D'_{j_0})^s 
    =L\cdot \rho \tau_{j_0}\\
    \le&\ 2\rho=2\rho\sum_{j=1}^{|I|} \tau_j \le 2\sum_{j=1}^{|I|} d_j^s \le  4C_1^s \cdot\mathcal{H}^s_\delta(\mathcal{O}^1).
	\end{align*}
    By triangle inequality,
    $$L\cdot \mathrm{diam}(\mathcal{O}^L)\ge L\cdot d((1/L)z,y)\ge d(z,y)=1.$$
    Thus 
    $$L\ge \dfrac{1}{\mathrm{diam}(\mathcal{O}^L)}\ge \dfrac{1}{\mathrm{diam}(D'_{j_0})}\ge \dfrac{1}{C_1\delta}.$$
    This means that the above chosen $L\to\infty$ as $\delta\to 0$. Let $\delta\to 0$; we conclude that
    $$\mathcal{E}^s(\mathcal{O}^1)\le 4C_1^s\cdot \mathcal{H}^s(\mathcal{O}^1).$$
\end{proof}

\begin{prop}\label{dim_sup}
	$\mathcal{D}:=\sup\{\dimH(Hy)|(Y,y,H)\in\Omega(\widetilde{M},\langle\gamma \rangle)\}$ can be obtained.
\end{prop}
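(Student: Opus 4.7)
The plan is to extract a maximizing sequence, pass to a subsequential equivariant Gromov--Hausdorff limit using the compactness of $\Omega(\widetilde{M},\langle\gamma\rangle)$, and then show the supremum is attained at that limit by transferring dimension information through the convergence of equal-partition diameters.

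First, I pick $(Y_i,y_i,H_i,z_i)\in\Omega(\widetilde{M},\langle\gamma\rangle,1)$ with $\dimH(H_iy_i)\to\mathcal{D}$; such $z_i\in H_iy_i\cap\partial B_1(y_i)$ exists by Proposition \ref{c1}. Lemma \ref{boomer_control} confines the orbit segments $\{tz_i\}_{t\in[0,1]}$ to $\overline{B_{C_1}}(y_i)$ uniformly in $i$, so by the compactness of $\Omega(\widetilde{M},\langle\gamma\rangle)$ (Proposition \ref{cpt_cnt}) I extract a subsequence with
\[
(Y_i,y_i,H_i,z_i)\overset{GH}\longrightarrow(Y_\infty,y_\infty,H_\infty,z_\infty)\in\Omega(\widetilde{M},\langle\gamma\rangle,1).
\]
Second, I establish convergence of equal partitions. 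Combining Lemma \ref{boomer_control} with Lemma \ref{partition_dist_estimate}(1) shows the orbit parametrizations $\phi_i:[0,1]\to Y_i$, $\phi_i(t)=tz_i$, are uniformly equicontinuous; a Gromov--Hausdorff-adapted Arzel\`a--Ascoli argument then yields uniform convergence $\phi_i\to\phi_\infty$ along a further subsequence, from which $\mathrm{diam}(\mathcal{O}^L_i)\to\mathrm{diam}(\mathcal{O}^L_\infty)$ for every fixed $L\in\mathbb{Z}_+$. By Lemma \ref{Hdim_equal_partition}, it then suffices to prove $\mathcal{E}^s(\mathcal{O}^1_\infty)=\infty$, equivalently $\liminf_L L\cdot\mathrm{diam}(\mathcal{O}^L_\infty)^s=\infty$, for every $s<\mathcal{D}$.

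The main obstacle is this last step, because Hausdorff dimension of orbits is not generally upper semicontinuous under eGH convergence even with pointwise diameter convergence at each integer scale. My plan to rule out the ``dimension drop'' is a rescaling-at-critical-scale argument. Assume for contradiction $\dimH(H_\infty y_\infty)=:\mathcal{D}'<\mathcal{D}$, choose $s_i\nearrow\mathcal{D}$ with $s_i>\mathcal{D}'$; for $i$ large, $\mathcal{E}^{s_i}(\mathcal{O}^1_i)=\infty$ by Lemma \ref{Hdim_equal_partition}, so I can pick $L_i$ with $L_i\cdot\mathrm{diam}(\mathcal{O}^{L_i}_i)^{s_i}\ge i$. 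A bounded-$L_i$ subsequence would force $L\cdot\mathrm{diam}(\mathcal{O}^L_i)^{s_i}\to L\cdot\mathrm{diam}(\mathcal{O}^L_\infty)^{\mathcal{D}}<\infty$, contradicting $L\cdot\mathrm{diam}(\mathcal{O}^{L_i}_i)^{s_i}\ge i$, so $L_i\to\infty$. Setting $\lambda_i:=d((1/L_i)z_i,y_i)$ (comparable to $\mathrm{diam}(\mathcal{O}^{L_i}_i)$ by Lemma \ref{boomer_control}), the rescalings $(\lambda_i^{-1}Y_i,y_i,H_i,(1/L_i)z_i)$ still lie in $\Omega(\widetilde{M},\langle\gamma\rangle,1)$ via a diagonal argument identifying rescalings of equivariant asymptotic cones as equivariant asymptotic cones, and $\dimH$ of the orbit is preserved since it is scale-invariant. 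I then pass to a further eGH subsequential limit $(Y_\star,y_\star,H_\star,z_\star)\in\Omega(\widetilde{M},\langle\gamma\rangle,1)$ in which the dimension-witnessing scale has been dragged to unit size, so $\mathrm{diam}(\mathcal{O}^1_\star)\ge 1$ and the required lower bound $L\cdot\mathrm{diam}(\mathcal{O}^L_\star)^s\ge c>0$ for $L$ large should become visible directly through diameter convergence at each fixed $L$.

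The delicate part is closing this rescaled step: one must prevent the dimension from being pushed to yet another deeper scale. The plan is to use simultaneously the self-similarity $\mathcal{O}^1=\bigcup_{k=0}^{L-1}\sigma(k/L)\cdot\mathcal{O}^L$ (which gives $\mathcal{H}^s(\mathcal{O}^1)=L\cdot\mathcal{H}^s(\mathcal{O}^L)$ for the one-parameter orbit), the uniform upper bound $\mathrm{diam}(\mathcal{O}^L)\le 2C_1^3L^{-1/n}$ of Lemma \ref{partition_dist_estimate}(1), and the subadditivity $(L+1)\mathrm{diam}(\mathcal{O}^{L+1})^s\le L\cdot\mathrm{diam}(\mathcal{O}^L)^s+C_1^s$ of Lemma \ref{partition_dist_estimate}(2). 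The first forces the rescaled orbit to be geometrically self-similar, the second controls the speed at which the critical scale can escape, and the third gives enough quantitative rigidity to propagate the lower bound $L\cdot\mathrm{diam}^s\ge c$ across scales in the limit. This is where the bulk of the technical work lies.
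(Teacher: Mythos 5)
Your overall strategy is aligned with the paper's --- extract a maximizing sequence, rescale at a critical partition scale, and pass to a subsequential limit in $\Omega(\widetilde{M},\langle\gamma\rangle,1)$ --- but the decisive step is left open, and the critical scale you chose cannot support the argument you need. You select $L_i$ with only a one-sided condition $L_i\cdot\mathrm{diam}(\mathcal{O}^{L_i}_i)^{s_i}\ge i$. After rescaling so that $\mathcal{O}^{L_i}_i$ has unit size, proving $\dimH(H_\star y_\star)\ge s$ requires a lower bound on $K\cdot\mathrm{diam}(\mathcal{O}^K_\star)^s$ for \emph{every} integer $K\ge 2$, which by the convergence of partitions amounts to controlling the ratio $\mathrm{diam}(\mathcal{O}^{KL_i}_i)/\mathrm{diam}(\mathcal{O}^{L_i}_i)$ from below --- and your choice of $L_i$ gives no such control. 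The self-similarity heuristic you sketch does not close this: the $K$-partition of the rescaled limit orbit corresponds to the $KL_i$-partition of $\mathcal{O}^1_i$, and nothing in the equation $\mathcal{H}^s(\mathcal{O}^1)=L\cdot\mathcal{H}^s(\mathcal{O}^L)$ or the subadditivity of Lemma \ref{partition_dist_estimate}(2) by itself prevents the diameters at scales $KL_i$ from dropping off faster than $K^{-1/s}$ relative to scale $L_i$. You explicitly flag that ``the bulk of the technical work lies'' here, and indeed that is exactly where your proposal is incomplete.

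The missing idea --- and what the paper does --- is to choose $L_j$ as a \emph{threshold} scale, using Lemma \ref{partition_dist_estimate}(2) to guarantee a two-sided control: (i) $L\cdot\mathrm{diam}(\mathcal{O}^L_j)^s\ge 2^j$ for all $L\ge L_j$, and (ii) $L_j\cdot\mathrm{diam}(\mathcal{O}^{L_j}_j)^s\le 2^j+C_1^s$. Condition (i) at $L=KL_j$ combined with condition (ii) at $L=L_j$ gives
$$\dfrac{\mathrm{diam}(\mathcal{O}^{KL_j}_j)}{\mathrm{diam}(\mathcal{O}^{L_j}_j)}\ge\left(\dfrac{2^j}{K\,(2^j+C_1^s)}\right)^{1/s}\longrightarrow K^{-1/s}$$
as $j\to\infty$, which passes directly to the rescaled limit to give $K\cdot\mathrm{diam}(\mathcal{O}^K_{(Y',y',H',z')})^s\ge 1$ for all $K$, hence $\dimH(H'y')\ge s$ by Lemma \ref{Hdim_equal_partition}. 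This also makes your preliminary detour --- passing first to an unrescaled limit $(Y_\infty,y_\infty,H_\infty,z_\infty)$ and arguing by contradiction that the dimension cannot drop --- unnecessary: once you apply the threshold construction for an arbitrary $s\in(1,\mathcal{D})$, the rescaled limit $(Y',y',H',z')$ attains dimension at least $s$ directly, and letting $s\to\mathcal{D}$ finishes the proof.
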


\begin{proof}
	If $\mathcal{D}=1$, then $\dimH(Hy)=1$ for all $(Y,y,H)\in\Omega(\widetilde{M},\langle\gamma\rangle)$ and the result holds trivially. Below, we assume that $\mathcal{D}>1$.
	
	\textbf{Claim:} Given $s\in(1,\mathcal{D})$, there is $(Y',y',H',z')\in \Omega(\widetilde{M},\langle\gamma \rangle,1)$ such that
	$$K\cdot \mathrm{diam}(\mathcal{O}^K_{(Y',y',H',z')})^s\ge 1$$
	for all integers $K\ge 1$.
	
	Let $\{(Y_j,y_j,H_j,z_j)\}_j$ be a sequence of spaces in $\Omega(\widetilde{M},\langle\gamma \rangle,1)$ such that $$\lim_{j\to\infty} \dimH(H_jy_j)\to \mathcal{D},$$
	Let $s\in(1,\mathcal{D})$, then $s<\dimH(H_jy_j)$ for all $j$ large. By Lemma \ref{Hdim_equal_partition}, this implies that
	$$L\cdot \mathrm{diam}(\mathcal{O}^L_{(Y_j,y_j,H_j,z_j)})^s \to \infty$$
	as $L\to\infty$. For each $j\in\mathbb{N}$, we choose $L_j\in\mathbb{N}$ as the smallest integer such that $$L\cdot \mathrm{diam}(\mathcal{O}^L_{(Y_j,y_j,H_j,z_j)})^s \ge 2^j$$ for all $L\ge L_j$. By Lemma \ref{partition_dist_estimate}(2) and the choice of $L_j$, such an $L_j$ also satisfies
    $$L_j\cdot \mathrm{diam}(\mathcal{O}^{L_j}_{(Y_j,y_j,H_j,z_j)})^s\in [2^j,2^j+C_1^s].$$
    Let $r_j=d((1/L_j)z_j,y_j)$. By Lemma \ref{partition_dist_estimate}(1), $r_j\to 0$ as $j\to \infty$. 
    Passing to a subsequence if necessary, we consider
    $$(r_j^{-1}Y_j,y_j,H_j,(1/L_j)z_j)\overset{GH}\longrightarrow (Y',y',H',z')\in \Omega(\widetilde{M},\langle\gamma \rangle,1).$$
    Let $K\in \mathbb{Z}_+$. We estimate that
    \begin{align*}
    	r_j^{-1}\cdot\mathrm{diam}(\mathcal{O}^{KL_j}_{(Y_j,y_j,H_j,z_j)})\ge & \dfrac{\mathrm{diam}(\mathcal{O}^{KL_j}_{(Y_j,y_j,H_j,z_j)})}{\mathrm{diam}(\mathcal{O}^{L_j}_{(Y_j,y_j,H_j,z_j)})}\\
    	\ge & \dfrac{\left(\dfrac{2^j}{KL_j}\right)^{1/s}}{\left(\dfrac{2^j+C_1^s}{L_j}\right)^{1/s}}
    	\to \left(\dfrac{1}{K}\right)^{1/s}
    \end{align*}
    as $j\to \infty$. Note that for any $K\in \mathbb{Z}_+$, it holds the convergence
    $$(r_j^{-1}Y_j,y_j,\mathcal{O}^{KL_j}_{(Y_j,y_j,H_j,z_j)})\overset{GH}\longrightarrow (Y',y',\mathcal{O}^K_{(Y',y',H',z')}).$$
    Thus
    $$K\cdot \mathrm{diam}(\mathcal{O}^K_{(Y',y',H',z')})^s\ge K \cdot \dfrac{1}{K}=1$$
    for any integer $K\ge 1$. Hence $(Y',y',H',z')$ is the desired space for the claim.
    
    With this claim, for a sequence $s_j\in (1,D)$ with $s_j\to D$ we can choose a sequence $(Y_j,y_j,H_j,z_j)\in \Omega(\widetilde{M},\langle\gamma \rangle,1)$ such that
    $$K\cdot \mathrm{diam}(\mathcal{O}^K_{(Y_j,y_j,H_j,z_j)})^{s_j}\ge1$$
    for all integers $K\ge 1$ and all $j$. Using precompactness again, we have convergence
    $$(Y_j,y_j,H_j,z_j)\overset{GH}\longrightarrow (Y,y,H,z)\in \Omega(\widetilde{M},\langle\gamma \rangle,1).$$
    For each fixed $K\ge 1$, it is clear that
    $$K \cdot \mathrm{diam}(\mathcal{O}^K_{(Y,y,H,z)})^D=\lim\limits_{j\to\infty} K\cdot \mathrm{diam}(\mathcal{O}^K_{(Y_j,y_j,H_j,z_j)})^{s_j} \ge 1.$$
    This shows that $\dimH(Hy)\ge D$ by Lemma \ref{Hdim_equal_partition}. Hence $\dimH(Hy)= D$.
\end{proof}

\subsection{Proof of Proposition C(2)}\label{subsec_pf_c2}

We continue to use the notation $\mathcal{D}$ as in Proposition \ref{dim_sup}. 

\begin{lem}\label{uniform_partition}
	Let $s>\mathcal{D}$ and $\epsilon>0$. Then there is a constant $L_0=L_0(\epsilon,s,\widetilde{M},\gamma)$ such that for all $(Y,y,H,z)\in\Omega(\widetilde{M},\langle \gamma \rangle,1)$, there exists an integer $2\le L\le L_0$ with
	$$L\cdot \mathrm{diam}(\mathcal{O}^L_{(Y,y,H,z)})^s\le\epsilon.$$
\end{lem}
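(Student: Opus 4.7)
The plan is a proof by contradiction using compactness of $\Omega(\widetilde M,\langle\gamma\rangle,1)$: if the uniform upper bound failed, one would extract a limiting extremizer whose one-parameter orbit has Hausdorff dimension strictly greater than $\mathcal{D}$, contradicting the definition of $\mathcal{D}$.

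Suppose the conclusion fails. Then there exist $\epsilon_0>0$, $s_0>\mathcal{D}$, and a sequence $(Y_j,y_j,H_j,z_j)\in\Omega(\widetilde{M},\langle\gamma\rangle,1)$ such that
$$L\cdot \mathrm{diam}(\mathcal{O}^L_{(Y_j,y_j,H_j,z_j)})^{s_0}>\epsilon_0 \quad \text{for every integer } L\in[2,j].$$
By the compactness of $\Omega(\widetilde{M},\langle\gamma\rangle)$ from Proposition \ref{cpt_cnt} together with the uniform bound $z_j\in\overline{B_1}(y_j)$, I would pass to a subsequence and obtain an equivariant Gromov-Hausdorff limit
$$(Y_j,y_j,H_j,z_j)\overset{GH}\longrightarrow (Y',y',H',z'),$$
with $d(z',y')=1$ and $z'\in H'y'$; by Proposition \ref{c1} the limit lies in $\Omega(\widetilde{M},\langle\gamma\rangle,1)$. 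For each fixed integer $L\ge 2$, I would then establish the Hausdorff convergence $\mathcal{O}^L_{(Y_j,y_j,H_j,z_j)}\to\mathcal{O}^L_{(Y',y',H',z')}$, which is the same sort of orbit-segment convergence already exploited in the proof of Proposition \ref{dim_sup}. It reduces to the continuity, under equivariant Gromov-Hausdorff convergence, of the parametrization $t\mapsto tz$ defined via the simply connected nilpotent Lie group structure on the ambient $G$-orbit $Gy$ from Proposition \ref{topol_dim}(1). Passing the strict inequality to the limit for each fixed $L\ge 2$ would then give
$$L\cdot \mathrm{diam}(\mathcal{O}^L_{(Y',y',H',z')})^{s_0}\ge \epsilon_0.$$

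Taking $\liminf_{L\to\infty}$ yields $\mathcal{E}^{s_0}(\mathcal{O}^1_{(Y')})\ge \epsilon_0>0$; by Lemma \ref{Hdim_equal_partition} this forces $\mathcal{H}^{s_0}(\mathcal{O}^1_{(Y')})\ge \epsilon_0/C_2>0$, and hence $\dimH(H'y')\ge \dimH(\mathcal{O}^1_{(Y')})\ge s_0>\mathcal{D}$, contradicting the definition of $\mathcal{D}$. The main obstacle is justifying the Hausdorff convergence $\mathcal{O}^L_{(Y_j)}\to\mathcal{O}^L_{(Y')}$: since $tz$ is defined through the simply connected nilpotent Lie group structure on $Gy$ rather than as the orbit of a fixed continuously varying isometry, one must verify that in a convergent sequence of such nilpotent Lie groups $G_jy_j$, one-parameter subgroups through convergent base elements $g_j\to g'$ (with $g_jy_j=z_j$, $g'y'=z'$) vary continuously, so that $tz_j\to tz'$ for each $t$. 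The a priori diameter bound $\mathrm{diam}(\mathcal{O}^L)\le 2C_1^3 L^{-1/n}$ from Lemma \ref{partition_dist_estimate}(1) then provides enough equicontinuity in $t$ to upgrade this pointwise convergence to Hausdorff convergence of the compact segments $\mathcal{O}^L$.
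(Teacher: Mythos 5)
Your proposal is correct and follows essentially the same contradiction-and-compactness argument as the paper: extract a sequence violating the bound, pass to an equivariant GH limit in $\Omega(\widetilde M,\langle\gamma\rangle,1)$, transfer the lower bound on $L\cdot\mathrm{diam}(\mathcal{O}^L)^s$ to the limit for each fixed $L$, and conclude $\dimH(H'y')\ge s>\mathcal{D}$ via Lemma \ref{Hdim_equal_partition}. The convergence $\mathcal{O}^L_{(Y_j)}\to\mathcal{O}^L_{(Y')}$ that you flag as the main obstacle is indeed the crux, and the paper treats it as routine (it is the same orbit-segment convergence already invoked in the proof of Proposition \ref{dim_sup}), so your explicit acknowledgment of it is a fair and accurate reading of where the work lives.
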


\begin{proof}
	We argue by contradiction. Suppose that for each integer $L_j=j$, there is some space $(Y_j,y_j,H_j,z_j)\in \Omega(\widetilde{M},\langle \gamma \rangle,1)$ such that
	$$L\cdot \mathrm{diam}(\mathcal{O}^L_{(Y_j,y_j,H_j,z_j)})^s > \epsilon$$
	for all $2\le L\le L_j$. After passing to a subsequence, we consider
	$$(Y_j,y_j,H_j,z_j)\overset{GH}\longrightarrow (Y',y',H',z').$$
	For any integer $L\ge 2$, we observe that
	$$L\cdot\mathrm{diam}(\mathcal{O}^L_{(Y',y',H',z')})^s=\lim\limits_{j\to\infty} L\cdot \mathrm{diam}(\mathcal{O}^L_{(Y_j,y_j,H_j,z_j)})^s\ge \epsilon.$$
	Thus $\dimH(H'y')\ge s$ by Lemma \ref{Hdim_equal_partition}, which is a contradiction to $s>\mathcal{D}$.
\end{proof}

Below we write $|\gamma|=d(\gamma \tilde{p},\tilde{p})$ for convenience. Next, we transfer Lemma \ref{uniform_partition} to a distance estimate of $\langle\gamma\rangle$-action on $\widetilde{M}$.

\begin{lem}\label{length_growth_estimate}
	Let $s>\mathcal{D}$. Then there are constants $L'=L'(s,\widetilde{M},\gamma)$ and $R=R(s,\widetilde{M},\gamma)$ such that the following holds.
	
	For any $\gamma^b\in\langle\gamma\rangle$ with $|\gamma^b|\ge R$, where $b\in\mathbb{Z}_+$, there exists an integer $2\le L\le L'$ such that
	$$|\gamma^b|\ge L^{1/s}\cdot |\gamma^{\lceil b/L \rceil}|,$$
	where $\lceil \cdot \rceil$ means the ceiling function.
\end{lem}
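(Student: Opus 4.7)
The plan is to argue by contradiction, combining the uniform partition estimate of Lemma \ref{uniform_partition} with a blow-down at the scale $r_i := |\gamma^{b_i}|$. Suppose the lemma fails. Taking $L' = R = i$ produces a sequence $b_i \in \mathbb{Z}_+$ with $r_i \ge i$ and
$$r_i < L^{1/s}\,|\gamma^{\lceil b_i/L\rceil}| \qquad \text{for every } L \in \{2,\ldots,i\},$$
so, writing $d_{i,L} := |\gamma^{\lceil b_i/L\rceil}|$, we have $d_{i,L}/r_i > L^{-1/s}$. After passing to a subsequence,
$$(r_i^{-1}\widetilde{M},\tilde{p},\langle\gamma\rangle,\gamma^{b_i}\tilde{p}) \overset{GH}{\longrightarrow} (Y,y,H,z) \in \Omega(\widetilde{M},\langle\gamma\rangle,1).$$

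Applying Lemma \ref{uniform_partition} to this limit with $\epsilon = 1/2$ yields a universal $L_0 = L_0(1/2,s,\widetilde{M},\gamma)$ and an integer $L_\ast \in [2, L_0]$ with $L_\ast\cdot \mathrm{diam}(\mathcal{O}^{L_\ast})^s \le 1/2$; once $i \ge L_0$, the contradiction hypothesis applies to $L = L_\ast$. The crux is to realize the one-parameter point $(1/L_\ast)z \in Hy$ from Proposition \ref{c1} as a limit of the relevant powers of $\gamma$. Set $o_i := L_\ast\lceil b_i/L_\ast\rceil - b_i \in [0, L_\ast - 1]$, so $|\gamma^{o_i}|$ is uniformly bounded in $i$, and observe $(\gamma^{\lceil b_i/L_\ast\rceil})^{L_\ast} = \gamma^{b_i}\gamma^{o_i}$ by centrality of $\gamma$.

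If $d_{i,L_\ast}/r_i$ stays bounded along a subsequence, then $\gamma^{\lceil b_i/L_\ast\rceil}$ converges to some $\beta \in H$ as isometries; since $r_i^{-1}|\gamma^{o_i}|\to 0$, one obtains $\beta^{L_\ast} y = z$. The one-parameter element $\sigma(1/L_\ast) \in H_0$ from Proposition \ref{c1} satisfies the same identity $\sigma(1/L_\ast)^{L_\ast} y = z$, so Corollary \ref{orbit_power_back} forces $\beta y = (1/L_\ast)z$. Therefore $d((1/L_\ast)z,y) = \lim r_i^{-1}d_{i,L_\ast} \ge L_\ast^{-1/s}$, which gives $L_\ast\cdot \mathrm{diam}(\mathcal{O}^{L_\ast})^s \ge 1$, contradicting the bound $\le 1/2$.

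The main obstacle is the remaining possibility $d_{i,L_\ast}/r_i\to\infty$: a priori there is no upper bound on $d_{i,L_\ast}$ in terms of $r_i$, so one cannot transfer the Lemma \ref{uniform_partition} estimate to the manifold directly. To rule this out I would rescale by $d_{i,L_\ast}^{-1}$ (noting $d_{i,L_\ast}\ge L_\ast^{-1/s} r_i\to\infty$), obtaining a new asymptotic cone $(Y',y',H')\in\Omega(\widetilde{M},\langle\gamma\rangle)$ in which $\gamma^{\lceil b_i/L_\ast\rceil}\tilde{p}$ sits at distance $1$ and converges to some $\beta' y'\neq y'$, while both $\gamma^{b_i}$ (displacement $r_i/d_{i,L_\ast}\to 0$) and $\gamma^{o_i}$ (bounded $\widetilde{M}$-displacement over $d_{i,L_\ast}\to\infty$) collapse to the identity. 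Hence $(\beta')^{L_\ast} y' = y'$, and Corollary \ref{orbit_power_back} applied to $\beta'$ and the identity forces $\beta' y' = y'$, contradicting $d(\beta' y', y') = 1$. Both cases yield contradictions, so the lemma holds with $L' = L_0$ and with $R$ chosen large enough for the limiting arguments to apply.
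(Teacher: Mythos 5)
Your argument follows the same route as the paper's proof: blow down at the scale $r_i = |\gamma^{b_i}|$, invoke Lemma~\ref{uniform_partition} with $\epsilon=1/2$ to fix $L'=L_0$ and extract an index $L_\ast\le L_0$ with $L_\ast\,\mathrm{diam}(\mathcal{O}^{L_\ast})^s\le 1/2$, identify $(1/L_\ast)z$ as the limit of $\gamma^{\lceil b_i/L_\ast\rceil}\tilde{p}$ by way of the relation $\beta^{L_\ast}y=z$ and Corollary~\ref{orbit_power_back}, and then contradict the lower bound $\mathrm{diam}(\mathcal{O}^{L_\ast})\ge L_\ast^{-1/s}$ that the failed inequality produces. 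The one substantive addition in your write-up is the explicit treatment of the possibility $d_{i,L_\ast}/r_i\to\infty$: the paper simply passes to a subsequence and writes $\gamma^{\lceil b_i/L\rceil}\to h\in H$, which tacitly assumes the displacement ratio $|\gamma^{\lceil b_i/L\rceil}|/r_i$ is bounded, and this boundedness is not justified there (the contradiction hypothesis only gives a lower bound on the ratio). Your rescaling argument closing this case---blow down at $d_{i,L_\ast}^{-1}$, note that both $\gamma^{b_i}$ and $\gamma^{o_i}$ then have vanishing rescaled displacement and so their limits fix $y'$, conclude $(\beta')^{L_\ast}y'=y'$, and apply Corollary~\ref{orbit_power_back} against the identity to force $\beta'y'=y'$---is correct, lives in $\Omega(\widetilde{M},\langle\gamma\rangle)$ as required, and is very much in the spirit of the paper's other rescaling arguments (compare Lemma~\ref{boomer_control}). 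In short: same approach, with a small implicit step in the paper made explicit and proved.
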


\begin{proof}
	We choose $L'(s,\widetilde{M},\gamma)=L_0(\frac{1}{2},s,\widetilde{M},\gamma)$, the constant in Lemma \ref{uniform_partition}. We argue by contradiction to prove the statement. Suppose that there is a sequence $b_i\to\infty$ such that
	$$|\gamma^{b_i}|\le L^{1/s}\cdot |\gamma^{\lceil b_i/L\rceil}|$$
	for all $L=2,...,L'.$ Let $r_i=|\gamma^{b_i}|\to\infty$, we consider
	$$(r_i^{-1}\widetilde{M},\tilde{p},\langle\gamma\rangle,\gamma^{b_i})\overset{GH}\longrightarrow (Y,y,H,g).$$
	It is clear that $g\in H$ satisfies $d(gy,y)=1$. We put $z=gy$. 
	
	We claim that for each integer $L\in\mathbb{Z}_+$, we have convergence 
	$$(r_i^{-1}\widetilde{M},\tilde{p},\gamma^{\lceil b_i/L \rceil}\tilde{p}) \overset{GH}\to (Y,y,(1/L)z).$$
	In fact, we can write
	$$\lceil b_i/L \rceil\cdot L=b_i+o_i,$$
	where $o_i\in \{0,1,...,L-1\}$. After passing to a subsequence, we have
	$$(r_i^{-1}\widetilde{M},\tilde{p},\gamma^{\lceil b_i/L \rceil},\gamma^{o_i})\overset{GH}\longrightarrow (Y,y,h,\delta)$$
	with $h^L=g \delta$; moreover, $\delta y=y$ because each $o_i$ is at most $L-1$. Thus we have
	$$h^L y= g\delta y= gy=z.$$
	By Lemma \ref{limit_one_para}(3), we have a one-parameter subgroup $\sigma:\mathbb{R}\to H_0$ with $\sigma(1)=g'$ and $g'y=z$. Then by Corollary \ref{orbit_power_back} and the fact that $h^L y=\sigma(1) y$, we conclude that
	$$hy=\sigma(1/L)y=(1/L)z.$$
	This proves the Claim.
	
	From the above Claim and the hypothesis, we have
	$$\mathrm{diam}(\mathcal{O}^L_{(Y,y,H,z)})\ge d((1/L)z,y)=\lim\limits_{i\to\infty} \dfrac{d(\gamma^{\lceil b_i/L \rceil}\tilde{p},\tilde{p})}{d(\gamma^{b_i}\tilde{p},\tilde{p})}\ge \left(\dfrac{1}{L}\right)^{1/s}$$
	for all $L\in\{2,...,L'\}$. On the other hand, by the choice $L'=L_0(\frac{1}{2},s,\widetilde{M},\gamma)$ and Lemma \ref{uniform_partition}, 
	$$\mathrm{diam}(\mathcal{O}^L_{(Y,y,H,z)})\le \left(\dfrac{1}{2L}\right)^{1/s}$$
	for some $L\in\{2,...,L'\}$. A contradiction.
\end{proof}

Then we use Lemma \ref{length_growth_estimate} repeatedly to derive a lower bound for $|\gamma^b|$.

\begin{prop}\label{orbit_length_lower}
	Let $s>\mathcal{D}$. Then there is a constant $C_3=C_3(s,\widetilde{M},\gamma)$ such that 
	$$|\gamma^b|\ge C_3\cdot b^{1/s}$$
	for all $b\in \mathbb{Z}_+$ large.
\end{prop}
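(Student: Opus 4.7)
The plan is to iterate Lemma \ref{length_growth_estimate} and telescope. First I fix $s>\mathcal{D}$ and let $L'=L'(s,\widetilde{M},\gamma)$, $R=R(s,\widetilde{M},\gamma)$ be the constants it provides. Since $\langle\gamma\rangle$ acts properly discontinuously on $\widetilde{M}$ and $\gamma$ has infinite order (as $\mathcal{N}$ is torsion-free), the set $\{m\in\mathbb{Z}_+:|\gamma^m|\le R\}$ is finite; let $N$ be its maximum (enlarging $R$ if necessary so $N\ge 1$), and set $\mu:=\min\{|\gamma^m|:1\le m\le N\}>0$. Both $N$ and $\mu$ depend only on $s$, $\widetilde M$, and $\gamma$. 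Moreover $|\gamma^b|\ge R$ for all $b>N$, so the conclusion need only be verified for such $b$.

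Given such a $b$, I apply Lemma \ref{length_growth_estimate} recursively. Set $b_0:=b$; while $|\gamma^{b_i}|\ge R$, let $L_{i+1}\in\{2,\ldots,L'\}$ be as in the lemma and set $b_{i+1}:=\lceil b_i/L_{i+1}\rceil$, so that
$$|\gamma^{b_i}|\;\ge\;L_{i+1}^{1/s}\,|\gamma^{b_{i+1}}|.$$
Since $L_{i+1}\ge 2$, $b_{i+1}\le\lceil b_i/2\rceil\le b_i-1$ whenever $b_i\ge 2$, so the strictly decreasing integer sequence $b_0>b_1>\cdots$ must eventually enter the stopping region $\{m:|\gamma^m|<R\}$; let $k$ be the terminal index. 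Then $b_k\le N$, and consequently $|\gamma^{b_k}|\ge\mu$.

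Chaining the per-step inequalities gives
$$|\gamma^b|\;\ge\;(L_1L_2\cdots L_k)^{1/s}\,|\gamma^{b_k}|\;\ge\;\mu\,(L_1\cdots L_k)^{1/s},$$
while the opposite ceiling bound $b_i\le L_{i+1}b_{i+1}$ iterates to $b\le L_1L_2\cdots L_k\cdot b_k\le N\cdot L_1\cdots L_k$. Substituting, $|\gamma^b|\ge\mu N^{-1/s}b^{1/s}$, so I take $C_3:=\mu N^{-1/s}$. I expect no real obstacle here: the factor $L^{1/s}$ in Lemma \ref{length_growth_estimate} is calibrated exactly against the division $b\mapsto b/L$, so the telescoping is guaranteed to reproduce the correct $b^{1/s}$ exponent. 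The only delicate points are termination of the recursion and the uniform positive lower bound on $|\gamma^{b_k}|$, both of which come directly from proper discontinuity of the $\langle\gamma\rangle$-action.
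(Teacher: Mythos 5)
Your proof is correct and follows essentially the same route as the paper: iterate Lemma \ref{length_growth_estimate}, telescope the per-step inequalities $|\gamma^{b_i}| \ge L_{i+1}^{1/s}|\gamma^{b_{i+1}}|$ against the ceiling bound $b_i \le L_{i+1} b_{i+1}$, and terminate using proper discontinuity. The paper packages the stopping threshold via a fixed integer $P_0$ with $|\gamma^b|\ge R$ for $b\ge P_0$ and sets the terminal constant $r_0=\min_m|\gamma^m|$, while you equivalently take $N=\max\{m:|\gamma^m|\le R\}$ and $\mu=\min_{1\le m\le N}|\gamma^m|$; both give $C_3 = (\text{terminal displacement})\cdot(\text{stopping bound})^{-1/s}$.
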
	

\begin{proof}
	Let $s>\mathcal{D}$ and let $P_0$ be a large integer such that
	$$|\gamma^b|\ge R(s,\widetilde{M},\gamma)$$
	holds for all $b\ge P_0$, where $R(s,\widetilde{M},\gamma)$ is the constant in Lemma \ref{length_growth_estimate}.
	
	Let $b>P_0$. By Lemma \ref{length_growth_estimate}, there is some integer $L_1\in\{2,...,L'\}$ such that
	$$|\gamma^b|\ge L_1^{1/s}\cdot |\gamma^{\lceil b/L_1 \rceil}|.$$
	If $\lceil b/L_1 \rceil\le P_0$, then we stop here. Otherwise, we apply Lemma \ref{length_growth_estimate} again to find some integer $L_2\in \{2,...,L'\}$ such that
	$$|\gamma^b|\ge L_1^{1/s}\cdot |\gamma^{\lceil b/L_1 \rceil}|\ge (L_1L_2)^{1/s}\cdot |\gamma^{\lceil\lceil b/L_1 \rceil/L_2\rceil}|.$$
	Repeating this process, we eventually obtain
	$$|\gamma^b|\ge \left( \prod_j L_j \right)^{1/s} \cdot |\gamma^{\lceil...\lceil b/L_1\rceil/L_2.../L_k\rceil}|\ge \left( \prod_j L_j \right)^{1/s} \cdot r_0,$$
	where $\lceil...\lceil b/L_1\rceil/L_2.../L_k\rceil\le P_0$ and $r_0=\min_{m\in \mathbb{Z}_+}|\gamma^m|$. Note that
	$$\dfrac{b}{\prod_j L_j}\le \lceil...\lceil b/L_1\rceil/L_2.../L_k\rceil \le P_0.$$
	It follows that
	$$|\gamma^b|\ge \left(\dfrac{b}{P_0}\right)^{1/s}\cdot r_0=C_3\cdot b^{1/s},$$
	where $C_3=r_0/P_0^{1/s}$.
\end{proof}	

As indicated in the introduction, Proposition C(2) follows immediately from Proposition \ref{orbit_length_lower} and Corollary \ref{orbit_length_upper}.

\begin{proof}[Proof of Proposition C(2)]
	Let $s>\mathcal{D}$. By Proposition \ref{orbit_length_lower}, we have a lower bound
	$$|\gamma^b|\ge C_3\cdot b^{1/s}$$
	for all $b\in \mathbb{Z}_+$ large. On the other hand, by Corollary \ref{orbit_length_upper}, we have an upper bound
	$$|\gamma^b|\le C_4\cdot b^{1/l}$$
	for all $b\in\mathbb{Z}_+.$ Combining these two inequalities together, we obtain
	$$C_3\cdot b^{1/s}\le C_4\cdot b^{1/l}$$
	for all $b$ large. We conclude that $s\ge l$. Recall that $s>\mathcal{D}$ is arbitrary, thus $\mathcal{D}\ge l$. Lastly, by Proposition \ref{dim_sup}, there exists $(Y,y,H)\in\Omega(\widetilde{M},\langle \gamma \rangle)$ such that
	$$\dimH(Hy)=\mathcal{D}\ge l.$$
	This completes the proof.
\end{proof}

\subsection{Relations to previous results on virtual abelianness}\label{subsec_recover}

As indicated in the introduction, Proposition C(2) immediately recovers the main result on metric cones and virtual abelianness in \cite{Pan_cone}. Recall that an open manifold with $\mathrm{Ric}\ge 0$ is \textit{conic at infinity}, if every asymptotic cone $(Y,y)$ is a metric cone with vertex $y$.

\begin{cor}\label{cor_cone}
	Let $(M,p)$ be an open $n$-manifold with $\mathrm{Ric}\ge 0$ and $E(M,p)\not=1/2$. If its Riemannian universal cover is conic at infinity, then $\pi_1(M)$ is virtually abelian.
\end{cor}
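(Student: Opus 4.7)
The plan is to reduce to Corollary B. Specifically, I will show that the hypothesis ``conic at infinity'' forces $\mathcal{D}_\infty(\widetilde M) \leq 1$; since $1 < 2$, Corollary B then yields $\step(\pi_1(M)) \leq 1$, which is the desired virtual abelianness. So fix an asymptotic cone $(Y,y) \in \Omega(\widetilde M)$ and a closed $\mathbb{R}$-subgroup $L \leq \mathrm{Isom}(Y)$; by hypothesis $(Y,y) = C(Z)$ is a metric cone with vertex $y$, and as an asymptotic cone of $\widetilde M$ it is also a pointed Gromov-Hausdorff limit of manifolds with $\mathrm{Ric}\ge 0$. It remains to show $\dimH(Ly) \leq 1$. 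The case $Ly = \{y\}$ is trivial, so I pick $g \in L$ with $gy \neq y$.

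The heart of the argument is the splitting claim that $Y$ decomposes isometrically as $\mathbb{R}^k \times C(Z')$ with $y$ corresponding to $(0,y')$, where $k\ge 1$, no isometry of $C(Z')$ moves $y'$, and moreover $Ly \subseteq \mathbb{R}^k \times \{y'\}$. I would extract this splitting iteratively from the Cheeger--Colding splitting theorem: the isometry $g$ moving the cone vertex, combined with the cone property that every radial geodesic from $y$ extends to a ray of infinite length, produces a bi-infinite geodesic line through $y$ in $Y$. When the orbit $\langle g\rangle y$ is unbounded this is direct --- concatenate minimal geodesics from $y$ to $g^{\pm n} y$ and pass to a subsequential limit; when $\langle g\rangle y$ is bounded, I would exploit the self-similarity $(\lambda Y, y) \cong (Y, y)$ of the cone at $y$ to rescale the action and reduce to the unbounded-orbit case. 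Cheeger--Colding then converts the line into an $\mathbb{R}$-factor, and iterating peels off the maximal Euclidean factor $\mathbb{R}^k$. The containment $Ly \subseteq \mathbb{R}^k \times \{y'\}$ follows from the fact that the $C(Z')$-component of any isometry of $Y$ must fix $y'$, since otherwise one could further split off an $\mathbb{R}$-factor from $C(Z')$, contradicting maximality.

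Once the orbit sits inside $\mathbb{R}^k$, the conclusion is routine: $L$ is a closed connected $\mathbb{R}$-subgroup of $\mathrm{Isom}(\mathbb{R}^k) = \mathbb{R}^k \rtimes O(k)$, hence a smooth one-parameter subgroup; its orbit of the origin is the image of a Lipschitz curve $\mathbb{R} \to \mathbb{R}^k$, so $\dimH(Ly) \leq 1$. This gives $\mathcal{D}_\infty(\widetilde M) \leq 1$, and Corollary B delivers the result.

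The main obstacle is the bi-infinite line extraction in the bounded-orbit subcase. Simple concatenation of iterates fails, and the rescaling scheme must be arranged so that the limit of the rescaled equivariant data remains a metric cone --- this is exactly where the global hypothesis that every asymptotic cone of $\widetilde M$ is conic enters essentially, rather than merely the local metric-cone structure of a single $(Y,y)$. Once a line through $y$ is secured, the rest of the argument --- iterated Cheeger--Colding splitting, confinement of $Ly$ to the Euclidean factor, and the Lipschitz-orbit bound in $\mathbb{R}^k$ --- is standard.
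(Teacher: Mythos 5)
Your overall strategy matches the paper's: reduce to Corollary B (equivalently Proposition C(2)) by showing $\dimH(Ly)\le 1$ for every asymptotic $\mathbb{R}$-orbit, and do so by using Cheeger--Colding splitting to confine the orbit to a Euclidean factor of the cone. But you route the argument through an unnecessary and incompletely-justified preliminary, and this is precisely the step you flag as the ``main obstacle.'' You attempt to first \emph{construct} a bi-infinite line through $y$ from the hypothesis that some $g\in L$ moves $y$, then iteratively split, then show $k\ge 1$. None of this is needed. The maximal Euclidean decomposition $Y\cong\mathbb{R}^k\times C(Z)$, with $y=(0,z)$ and $C(Z)$ a cone containing no lines, holds for \emph{any} Ricci-limit metric cone, with no input from the isometry group (and $k$ is allowed to be $0$). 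The decisive fact, which you state in passing but do not isolate, is that a metric cone without lines has a \emph{unique} vertex; hence $\mathrm{Isom}(Y)=\mathrm{Isom}(\mathbb{R}^k)\times\mathrm{Isom}(C(Z))$, every factor isometry of $C(Z)$ fixes $z$, and so $\mathrm{Isom}(Y)\cdot y\subset\mathbb{R}^k\times\{z\}$. In particular $Ly$ is a $C^1$-curve in a Euclidean factor, giving $\dimH(Ly)\le 1$ immediately (and the bound is trivial when $k=0$). In contrast, your line extraction is genuinely problematic: in the unbounded-orbit case, concatenating minimal geodesics from $y$ to $g^{\pm n}y$ need not converge to a line without some control on the angle at $y$ or the position of midpoints, and in the bounded-orbit case your rescaling reduction is not spelled out and is not obviously sound. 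Since the line step only serves to prove $k\ge 1$, which is irrelevant to the conclusion, you should simply drop it and argue as the paper does.
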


\begin{proof}
	 We shall show that 
	$$\dimH(Ly)=1$$
	for any $(Y,y)\in \Omega(\widetilde{M})$ and any closed $\mathbb{R}$-subgroup $L$ of $\mathrm{Isom}(Y)$. Then by Proposition C(2), we have 
	$$\step(\pi_1(M))\le 1,$$
	that is, $\pi_1(M)$ is virtually abelian (see Definition \ref{def_virnilstep}). 
	
	The verification of $\dimH(Ly)=1$ is standard given the metric cone structure. We give some details below for readers' convenience. Let $(Y,y)$ be any asymptotic cone of $\widetilde{M}$. By assumption, $Y$ is a metric cone with vertex $y$. By Cheeger-Colding splitting theorem, $(Y,y)$ splits isometrically as
	$$(\mathbb{R}^k \times C(Z),(0,z)),$$
	where $k\in\mathbb{N}\cap [0,n]$, $C(Z)$ is a metric cone without lines, and $z$ is the unique vertex of $C(Z)$. Moreover, its isometry group also splits as a product
	$$\mathrm{Isom}(Y)=\mathrm{Isom}(\mathbb{R}^k) \times \mathrm{Isom} (C(Z)).$$
	Because $z$ is the unique vertex of $C(Z)$, any isometry of $C(Z)$ must fix $z$. As a consequence, 
	$$g\cdot y=g\cdot (0,z)\in \mathbb{R}^k \times \{z\}$$
	for any isometry $g$ of $Y$. In particular, we have the orbit
	$$Ly \subseteq \mathbb{R}^k \times \{z\}$$
	for any closed $\mathbb{R}$-subgroup $L$ of $\mathrm{Isom}(Y)$. Thus we can view $Ly$ as a $C^1$-curve in the Euclidean factor $\mathbb{R}^k \times \{z\}$. Consequently, $Ly$ must have Hausdorff dimension $1$.
\end{proof}

Proposition C(2) also extends the main result on small escape rate and virtual abelianness in \cite{Pan_esgap}. To explain this, we prove a proposition below, which is based on the results in this paper and \cite{Pan_esgap}. We continue to use the notation $\mathcal{D}$ from Proposition \ref{dim_sup}.

\begin{prop}\label{cor_small_escape}
	Let $M$ be an open $n$-manifold with $\mathrm{Ric}\ge 0$ and $E(M,p)\le \epsilon$, where $\epsilon>0$ is a small number. Then 
	$$\mathcal{D}\le 1+\delta(\epsilon|n),$$
	where $\delta(\epsilon|n)\to 0$ as $\epsilon\to 0$. 
\end{prop}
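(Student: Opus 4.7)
The plan is to combine Propositions \ref{c1} and \ref{dim_sup} with the halving tunnel construction from the proof of Proposition \ref{C_tunnel}. By Proposition \ref{dim_sup} it is enough to bound $\dimH(Hy)$ uniformly over $(Y,y,H)\in\Omega(\widetilde{M},\langle\gamma\rangle)$. By Proposition \ref{c1}, $Hy=\mathbb{R}z$ and the parametrization $\phi:\mathbb{R}\to Hy$, $t\mapsto tz$, is a continuous bijection onto a space homeomorphic to $\mathbb{R}$, hence a homeomorphism. After rescaling to $d(y,z)=1$, it suffices to bound $\dimH(\mathcal{O}^1)=\dimH(\phi([0,1]))$, since $Hy=\bigcup_{n\in\mathbb{Z}}\phi([n,n+1])$ is a countable union of isometric translates of $\mathcal{O}^1$.

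First I would run the halving construction from the proof of Proposition \ref{C_tunnel} on the pair $(y,z)$, using Lemma \ref{midpt_orb} with $E(M,p)\le\epsilon$. This yields a continuous tunnel $\tau:[0,1]\to Hy$ from $y$ to $z$ such that for any dyadic sub-interval $I\subseteq[0,1]$ of length $2^{-J}$, the image $\tau(I)$ has diameter at most $C_\epsilon(1/2+\epsilon)^J$, where $C_\epsilon$ comes from the geometric series $\sum_{j\ge J}(1/2+\epsilon)^j$. Next I would use Proposition \ref{c1} crucially: composing $\phi^{-1}$ with $\tau$ gives a continuous map $[0,1]\to\mathbb{R}$ sending $0$ to $0$ and $1$ to $1$, so by the intermediate value theorem its image contains $[0,1]$. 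Therefore $\mathrm{im}(\tau)\supseteq\phi([0,1])=\mathcal{O}^1$.

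Consequently $\mathcal{O}^1$ is covered by the $2^J$ sets $\tau(I_k)$, $I_k=[k\,2^{-J},(k+1)2^{-J}]$, each of diameter at most $C_\epsilon(1/2+\epsilon)^J$. For any $s>0$,
\[
\mathcal{H}^s_{C_\epsilon(1/2+\epsilon)^J}(\mathcal{O}^1)\ \le\ 2^J\cdot\bigl[C_\epsilon(1/2+\epsilon)^J\bigr]^s=C_\epsilon^s\cdot\bigl[2(1/2+\epsilon)^s\bigr]^J.
\]
Sending $J\to\infty$, this tends to $0$ as soon as $2(1/2+\epsilon)^s<1$, i.e.\ $s>\log 2/\log\bigl(1/(1/2+\epsilon)\bigr)$. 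Hence
\[
\mathcal{D}\ \le\ \dimH(\mathcal{O}^1)\ \le\ 1+\delta(\epsilon),\qquad \delta(\epsilon):=\frac{\log(1+2\epsilon)}{\log 2-\log(1+2\epsilon)},
\]
and $\delta(\epsilon)\to 0$ as $\epsilon\to 0$. In fact the bound is independent of $n$.

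The main obstacle I expect is ensuring the tunnel image contains $\mathcal{O}^1$; this step depends crucially on Proposition \ref{c1}, i.e.\ on knowing \emph{a priori} that $Hy$ is one-dimensional. Without that input, the tunnel could sprawl through higher-dimensional portions of the $\mathcal{N}$-orbit, and the dyadic cover of $[0,1]$ by $2^J$ pieces would not translate into a dimension bound on the one-parameter arc. Once Proposition \ref{c1} is in hand, the halving itself is purely quantitative, and the ratio $\log 2/\log\bigl(1/(1/2+\epsilon)\bigr)$ drives how close the final dimension estimate is to $1$.
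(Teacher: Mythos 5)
There is a genuine gap in the step that lets you cover $\mathcal{O}^1$ by the $2^J$ pieces of the tunnel. The halving construction in the proof of Proposition~\ref{C_tunnel} (via Lemma~\ref{midpt_orb}) produces a path $\tau$ in the full orbit $Gy$ of the $\mathcal{N}$-action, not in the one-dimensional orbit $Hy$ of the $\langle\gamma\rangle$-action: Lemma~\ref{midpt_orb} only asserts that the midpoint of a geodesic from $y$ to $gy$ is close to \emph{some} orbit point $g'y\in Gy$, with no control on whether $g'\in H$. Your asserted conclusion ``$\tau:[0,1]\to Hy$'' is therefore unjustified, and the intermediate-value-theorem argument cannot be run, because $\phi^{-1}\circ\tau$ is undefined once $\tau$ leaves $Hy$. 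One cannot repair this by applying the midpoint lemma to $\widetilde{M}/\langle\gamma\rangle$, since that is an infinite cover and Lemma~\ref{escape_index} only controls the escape rate of \emph{finite} covers of $M$; and Proposition~\ref{c1} says nothing about the geometry of a tunnel that sprawls through the ambient $k_0$-dimensional orbit $Gy$. This is exactly the obstacle you flagged at the end, but flagging it does not resolve it: knowing $Hy\cong\mathbb{R}$ as a topological space gives no way to force the halving midpoints onto $Hy$.

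The paper circumvents this by importing a quantitative structural result from \cite{Pan_esgap} (Theorem~0.1 and Lemma~4.6 there): under $E(M,p)\le\epsilon$, every $(Y,y,H,z)\in\Omega(\widetilde{M},\langle\gamma\rangle,1)$ is $\Psi(\epsilon|n)$-close in equivariant Gromov--Hausdorff distance to a model $(\mathbb{R}^k\times X,(0,x),\mathbb{R},(1,x))$ in which $\mathbb{R}$ acts by translations, hence $\mathrm{diam}(\mathcal{O}^K)\le 1/K+\Psi(\epsilon|n)$; combining this with the $\mathcal{E}^s$ characterization of $\dimH$ (Lemma~\ref{Hdim_equal_partition}) yields the bound by a short contradiction. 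Note also that the dependence on $n$ is essential in this route (through $\Psi(\epsilon|n)$), so your claim that the bound is $n$-independent should be treated as a further warning sign that the tunnel step is not actually valid: if it were, it would strengthen the paper's conclusion in a way the author's own argument does not achieve.
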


\begin{proof}
	Let $\delta>0$. Suppose that $\mathcal{D}=1+\delta$. We set $s=1+\delta/2$. Following the proof of Proposition \ref{dim_sup}, we can find a space $(Y,y,H,z)\in \Omega(\widetilde{M},\langle \gamma \rangle,1)$ such that
	$$K\cdot \mathrm{diam}(\mathcal{O}^K)^s\ge 1$$
	for any integer $K\ge 1$. On the other hand, using the small escape rate condition, it follows from \cite[Theorem 0.1 and Lemma 4.6]{Pan_esgap} that
	$$d_{GH}((Y,y,H,z),(\mathbb{R}^k\times X,(0,x),\mathbb{R},(1,x)))\le \Psi(\epsilon|n),$$
	where $\times$ means a metric product and the group $\mathbb{R}$ acts as translations in $\mathbb{R}^k \times X$. For a fixed $\delta>0$ and $s=1+\delta/2$, we can choose a large integer $K$ such that
	$$2^s K^{1-s} \le 1/2.$$
	When $\epsilon$ is so small that $\Psi(\epsilon|n)\le 1/K$, we have
	$$1 \le K\cdot \mathrm{diam}(\mathcal{O}^K_{(Y,y,H,z)})^s \le K\cdot (\frac{1}{K}+\Psi)^s\le 2^s K^{1-s} \le 1/2.$$
	This clear contradiction shows that $\delta \to 0$ as $\epsilon \to 0$.
\end{proof}

Combining Proposition \ref{cor_small_escape} and Proposition C(2), it is clear that $\step(\pi_1(M))\ge 2$ implies $E(M,p)>\epsilon(n)$ for some universal constant $\epsilon(n)$, which is the main result in \cite{Pan_esgap}.
	
\appendix

\section{An application of the slice theorem}	

In this appendix, we prove Proposition \ref{G0_by_orbit} below, which is used in the proof of Lemmas \ref{tunnel_by_isotropy} and \ref{limit_one_para}(1).

\begin{prop}\label{G0_by_orbit}
	Let $Y$ be a Ricci limit space and let $G$ be a closed nilpotent subgroup of $\mathrm{Isom}(Y)$. Then there exists a point $z\in Y$ such that $G$ acts freely at $z$.
\end{prop}

The proof of Proposition \ref{G0_by_orbit} depends on a slice theorem by Palais \cite{Palais}. For other applications of the slice theorem to Ricci limit spaces, see \cite{PanWang,Wang}.
 
For the convenience of readers, we recall some basic notions. Let $Y$ be a completely regular topological space and let $G$ be a Lie group acting properly and effectively on $Y$ by homeomorphisms. Let $y\in Y$ and let $H=\mathrm{Iso}(G,y)$ be the isotropy subgroup at $y$. We say a subset $S\subseteq Y$ is $H$-invariant, if $H\cdot S=S$. Given an $H$-invariant subset $S$, we define a space $G\times_H S$ as a quotient $(G\times S)/\sim$, where the equivalence relation on $G\times S$ is given by
$$(g,s)\sim (gh,h^{-1}s)$$
for all $g\in G$, $h\in H$ and $s\in S$. We use the $[g,s]$ to write elements in this quotient space $G\times_H S$, where $g\in G$ and $s\in S$. The space $G\times_H S$ has a natural $G$-action by
$$g'\cdot [g,s]=[g'g,s],$$
where $g'\in G$ and $[g,s]\in G\times_H S$.

\begin{defn}
	We call $S\subseteq Y$ a slice at a point $y\in Y$, if the followings hold:\\
	(1) $S$ is $H$-invariant, where $H=\mathrm{Iso}(G,y)$;\\
	(2) $y\in S$ and $G\cdot S$ is an open neighborhood of $y$;\\
	(3) the map
	$$\psi: G\times_H S \to G\cdot S,\quad [g,s]\mapsto g\cdot s $$
	is an $G$-equivariant homeomorphism.
\end{defn}

The slice theorem by Palais \cite{Palais} guarantees the existence of a slice when $G$ is a Lie group. We formulate its statement in our context as follows. Because the isometry group of a Ricci limit space is always Lie \cite{CN}, the slice theorem applies.

\begin{thm}\cite{Palais}\label{slice}
	 Let $Y$ be a Ricci limit space and let $G$ be a closed subgroup of $\mathrm{Isom}(Y)$. Then for every $y\in Y$, there is a slice $S$ at $y$. 
\end{thm}

Note that the isotropy subgroup at $y$ and at $gy$ are related by a conjugation
$$\mathrm{Iso}(G,gy)=g \cdot \mathrm{Iso}(G,y) \cdot g^{-1}.$$
Hence each orbit $Gy$ corresponds to a conjugacy class of isotropy subgroups. Let 
$$\mathcal{I}(G)=\{H\le G| H=\mathrm{Iso}(G,y) \text{ for some } y\in Y \}$$
be the set of all isotropy subgroups. For $H\in\mathcal{I}(G)$, we denote by $[H]$ the conjugacy class of $H$. We define a partial order on the set of conjugacy classes: $[H]\le [K]$ if $H$ is conjugate to a subgroup of $K$ by some element $g\in G$.

\begin{defn}
	Let $H\in\mathcal{I}(G)$. We say that $[H]$ is principal, if $[K]\le [H]$ implies $[K]=[H]$, where $K\in \mathcal{I}(G)$.
\end{defn}

\begin{lem}\label{exist_prin}
	Let $Y$ be a Ricci limit space and let $G$ be a closed subgroup of $\mathrm{Isom}(Y)$. Then there is $H\in \mathcal{I}(G)$ such that $[H]$ is principle.
\end{lem}

\begin{proof}
	It suffices to show that any descending sequence
	$$[H_1]\ge [H_2] \ge ...\ge[H_i]\ge...$$
	must stabilize. Since $H_i\in\mathcal{I}(G)$, it is a compact subgroup of $G$; in particular, each $H_i$ is Lie and has only finitely many components. If $H_{i+1}$ is conjugate to a proper subgroup of $H_i$, then either $\dim(H_{i+1})<\dim(H_i)$, or $\dim(H_{i+1})=\dim(H_i)$ but $H_{i+1}$ has strictly less connected components than $H_i$. It follows that the sequence stabilizes.
\end{proof}

For a nilpotent isometric $G$-action on a Ricci limit space $Y$, we show that a principal $[H]$ must be trivial.

\begin{thm}\label{prin_orb}
	Let $Y$ be a Ricci limit space and let $G$ be a closed nilpotent subgroup of $\mathrm{Isom}(Y)$. Suppose $H\in\mathcal{I}(G)$ such that $[H]$ is principal, then $H=\{ \mathrm{id} \}$.
\end{thm}

To prove Theorem \ref{prin_orb}, beside the slice theorem, we need Lemma \ref{nil_G0_commute_cpt} and the following characterization of the identity map from \cite[Lemma 2.1]{PanRong}.

\begin{lem}\cite{PanRong}\label{fix_id}
	Let $X$ be a Ricci limit space and let $g$ be an isometry of $X$. Suppose that $g$ fixes every point in an open ball in $X$, then $g$ is the identity map on $X$.
\end{lem} 

\begin{proof}[Proof of Theorem \ref{prin_orb}]
	The proof is similar to that of the manifold case.
	
	Let $y\in Y$ such that $\mathrm{Iso}(G,y)=H$. By Theorem \ref{slice}, there is a slice $S$ at $y$. Because the open neighborhood $G\cdot S$ is $G$-equivariantly homeomorphic to $G\times_H S$, we study this local model $G\times_H S$ to understand $G\cdot S$.
	
	\textbf{Claim:} In $G\times_H S$, $\mathrm{Iso}(G,[g,s])$ is conjugate to $H$ by $g$. By definition, if $g'[g,s]=[g,s]$, then there is some $h\in H$ such that 
	$$(g'g,s)=(gh,h^{-1}s)$$
	in $G\times S$. Hence $h$ fixes $s$ and $g'=ghg^{-1}$. As a result, $\mathrm{Iso}(G,[g,s])$ must be conjugate to a subgroup of $H$ by $g$. Because $[H]$ is principal, the claim follows.
	
	Note that the claim in particular implies $\mathrm{Iso}(G,[e,s])=H$. Let $h\in H$. Then $h$ fixes every point in $S$. For any $g_0\in G_0$ and any $s\in S$, by Lemma \ref{nil_G0_commute_cpt} and compactness of $H$, we have
	$$hg_0 s=g_0 hs=g_0 s.$$
	Thus $h$ fixes every point in $G_0\cdot S$, which is an open neighborhood of $y$. Applying Lemma \ref{fix_id}, we conclude $h=\mathrm{id}$. Hence $H=\{ \mathrm{id} \}$.
\end{proof}

\begin{proof}[Proof of Proposition \ref{G0_by_orbit}]
	By Lemma \ref{exist_prin} and Theorem \ref{prin_orb}, there is a point $z\in Y$ such that the isotropy subgroup $\mathrm{Iso}(G,z)$ is trivial. The result follows.
\end{proof}

\end{document}